\newcommand{\dom}{\mbox{Dom}}
\newcommand{\1}{{\bf 1}}
 \newcommand{\Heis}{{\mathbf{H}^{n}}}   
  \newcommand{\Heissq}{({\mathbf{H}^{n})^2}}   
 \newcommand{\bW}{\mathbf{W}}
 \newcommand{\bE}{\mathbf{E}}
 \newcommand{\BH}{\mathcal{B}}
\newcommand{\E}{\mathbb E}
\newcommand{\R}{\mathbb R}
\newcommand{\N}{\mathbb N}
 \newcommand{\bww}{\mathbf{W}}
    \newcommand{\bX}{\mathbf{X}}
\newcommand{\cb}{\mathcal B}
\newcommand{\cd}{\mathcal D}
\newcommand{\cg}{\mathcal G}
\newcommand{\ch}{\mathcal H}
\newcommand{\cw}{\mathcal W}
\newcommand{\al}{\alpha}
\newcommand{\la}{\lambda}
\newcommand{\laa}{\Lambda}
\newcommand{\lp}{\left(}
\newcommand{\rp}{\right)}
\newcommand{\lc}{\left[}
\newcommand{\rc}{\right]}
\newcommand{\lcl}{\left\{}
\newcommand{\rcl}{\right\}}
\newcommand{\m}{k}
\newtheorem{theorem}{Theorem}[section]
\newtheorem{definition}[theorem]{Definition}
\newtheorem{lemma}[theorem]{Lemma}
\newtheorem{proposition}[theorem]{Proposition}
\theoremstyle{remark}
\newtheorem{remark}[theorem]{Remark}
\newtheorem{notation}[theorem]{Notation}
\theoremstyle{remark}
\newcommand{\bean}{\begin{eqnarray*}}
\newcommand{\eean}{\end{eqnarray*}}
\newcommand{\ben}{\begin{enumerate}}
\newcommand{\een}{\end{enumerate}}
\newcommand{\beq}{\begin{equation}}
\newcommand{\eeq}{\end{equation}}
\begin{document}

\title[PAM on Heisenberg group]{Parabolic Anderson model on Heisenberg groups:\\ the It\^o setting}

\author[{F. Baudoin \and C. Ouyang \and S. Tindel \and J. Wang}]
{Fabrice Baudoin \and Cheng Ouyang \and Samy Tindel \and Jing Wang}

\address{Fabrice Baudoin: Department of Mathematics,
University of Connecticut,
341 Mansfield Road,
Storrs, CT  06269, United States.}
\email{fabrice.baudoin@uconn.edu}
\thanks{F. Baudoin is supported by the NSF grant  DMS-1901315}

\address{Cheng Ouyang: Department of Mathematics, 
Statistics and Computer Science, University of Illinois at Chicago, Chicago, 
United States.}
\email{couyang@uic.edu}
\thanks{C. Ouyang is supported in part by Simons grant \#851792}

\address{Samy Tindel: Department of Mathematics, Purdue University, 150 N. University Street, West Lafayette, IN 47907, United States.}
\email{stindel@purdue.edu}
\thanks{S. Tindel is supported by the NSF grant  DMS-1952966.}

\address{Jing Wang: Department of Mathematics, Department of Statistics, Purdue University, 150 N. University Street, West Lafayette, IN 47907, United States.}
\email{jingwang@purdue.edu}
\thanks{J. Wang is supported by the NSF grant  DMS-1855523}

\begin{abstract}
In this note we focus our attention on a stochastic heat equation defined on the Heisenberg group $\Heis$ of order $n$. This equation is written as $\partial_t u=\frac{1}{2}\Delta u+u\dot{W}_\alpha$, where $\Delta$ is the hypoelliptic Laplacian on $\Heis$ and $\{\dot{W}_\alpha; \alpha>0\}$ is a family of Gaussian space-time noises which are white in time and have a covariance structure generated by $(-\Delta)^{-\alpha}$ in space. Our aim is threefold: (i) Give a proper description of the noise $W_\alpha$; (ii) Prove that one can solve the stochastic heat equation in the It\^{o} sense as soon as $\alpha>\frac{n}{2}$; (iii) Give some basic moment estimates for the solution $u(t,x)$.
\end{abstract}

\maketitle

\tableofcontents


\section{Introduction}

Many fascinating links between parabolic Anderson models \cite{Da, Dalang-Quer, HHNT, HN09, PZ}, KPZ equation \cite{Ha-14} and polymer models \cite{CY, La10, MRT, RT} on $\R^d$ or $\mathbb{Z}^d$ have been recently established. Challenging properties such as intermittency \cite{HHNT,Kh} and localization \cite{Ko-bk,La10} have also been derived in various contexts. Overall one can argue that those models are now fairly well understood on flat spaces, although the body of literature on the topic is still steadily growing. 

In this article we start a series of studies aiming at investigating if the geometry of the underlying space has a chance to influence  the intermittency and/or localization features of parabolic Anderson models. In fact previous studies \cite{TV} tend to show that compact manifolds will not yield behaviors which are significantly different from the flat situation, as far as PAM models are concerned. On the other hand, the recent contribution \cite{CSZ} exhibits more substantial changes on non-compact discrete manifolds such as infinite graphs. We have thus decided to turn our attention to a class of non compact sub-Riemannian manifolds for which explicit computations are still available, namely the family $\{\Heis, n\geq1\}$ of Heisenberg groups based on $\R^n$. 

More specifically let $\Heis$ be the space $\R^{2n}\times\R$ equipped with the following product defined for $(a,b,c), (a', b', c')\in\R^{n}\times\R^n\times\R$: 
\[
(a,b,c)\star (a',b',c'):=\left(a+a', b+b', c+c'+2\omega((a,b),(a',b')) \right),\quad (a,b), \, (a',b')\in \R^{2n} \, ,
\]
where $\omega$ is the symplectic form $\omega((a,b),(a',b'))=\sum_{i=1}^n a_i' b_i-a_ib_i'$. Basic facts about Heisenberg groups will be recalled in Section \ref{sec:preliminaries}. At this point let us just recall that a subelliptic Laplace operator $\Delta$ can be defined on $\Heis$. With this operator in hand, we consider the following linear equation on $\R_+\times\Heis$:
\begin{equation}\label{eq:pam}
\partial_{t} u(t,q) = \frac{1}{2}\Delta u(t,q) + u(t,q) \, \dot{W}(t,q),
\end{equation}
interpreted in the It\^{o} sense. We focus on a proper definition, existence-uniquness result and basic moment estimates for equation \eqref{eq:pam}. Our findings can be summarized as follows:

\begin{enumerate}[wide, labelwidth=!, labelindent=0pt, label= \textbf{(\roman*)}]
\setlength\itemsep{.05in}

\item
 A substantial part of our effort is dedicated to properly define and study a natural class of space-time Gaussian noise $\{W_\alpha; 0<\alpha<\frac{n}{2}+1\}$ on $\R_{+}\times\Heis$. For the sake of conciseness, we will restrict our study here to noises which are white in time. As far as the space variable is concerned, our noisy inputs will deviate slightly from fractional Brownian motion or Riesz-type noises which are usually considered for stochastic PDEs on $\R^d$ (see e.g. \cite{ HHNT, HN09}). Namely for a fixed $\alpha\in(0,\frac{n}{2}+1)$, the noise $W_\alpha$ is defined as a centered Gaussian family $\{\bW_\alpha(\phi); \phi\in \mathcal{H}\}$, with a covariance function of the form 
\begin{align}\label{intro: eq-cov-0<s<1}
\bE\left[ \bW_{\!\al}(\varphi)\bW_{\!\al}(\psi)\right]
&
=\int_{\R_+}\int_{\Heis} \lc (-\Delta)^{-\alpha} \varphi\rc \!(t,q_1) \, \lc (-\Delta)^{-\alpha} \psi\rc \!(t,q_1) \, d\mu(q_1) dt,
\end{align}
where the Hilbert space $\mathcal{H}$ is based on a proper Sobolev-type structure and where $\mu$ stands for the Haar measure on $\Heis$. Although this kind of noise might be related to Riesz-type noises due to the properties of the kernel related to $(-\Delta)^{-\alpha}$, our definition is in fact inspired by \cite{LSSW} and references therein (with motivations rooted in the analysis of Gaussian free fields and log-correlated processes).

\item 
Once the noise $\bW_\alpha$ is defined, we  investigate basic properties for this process (such as invariance with respect to dilations, rotations and left translations on $\Heis$). We also separate a regime $0<\alpha<\frac{n+1}{2}$ for which $\bW_\alpha(t,\cdot)$ is distribution-valued, from the situation $\frac{n+1}{2}<\alpha<\frac{n}{2}+1$ which yields a point-wise definition of $x\mapsto\bW_\alpha(t,x)$ as a function. Notice that the case $\alpha=0$ corresponds to a white noise on $\R_+\times\Heis$, while $\bW_\alpha$ with $\alpha=\frac{n+1}{2}$ is a log-correlated field and a parameter $\alpha>\frac{n}{2}+1$ means that $x\mapsto\bW_\alpha(t,x)$ is differentiable.  We do not take those cases into consideration for the sake of conciseness. A Besov space analysis of the field $\bW_\alpha$ is postponed to a subsequent paper.

\item The field $\bW_\alpha$ serves as a noisy input for equation \eqref{eq:pam}. In Section \ref{existence and uniqueness} we build a stochastic integration theory inspired by \cite{Da, Dalang-Quer}, which enables the definition of a random field mild solution to the stochastic heat equation. Then we handle existence results thanks to two different methods:
\begin{enumerate}[wide, labelwidth=!, labelindent=0pt, label= {(\arabic*)}]
\setlength\itemsep{.05in}

\item
When $\alpha\in(\frac{n}{2}, \frac{n+1}{2})$, one can apply the general results from \cite{PT} in order to get existence and uniqueness thanks to It\^{o} type calculus considerations. Notice that this method only applies to the case when $\bW_{\al}(t,\cdot)$ is distribution-valued.

\item 
We also revisit the existence-uniqueness problem through chaos expansions. This presents several advantages: it allows us to treat the case $\alpha\in (\frac{n+1}{2}, \frac{n}{2}+1)$, and it also yields necessary conditions on $\alpha$.
\end{enumerate}

\item
Another advantage of the chaos expansion method alluded to above is that it potentially leads to sharp estimates for moments of the solution. This is often achieved through Feynman-Kac representations. In this paper we restrict our computations to a basic exponential type estimates of the form
$$c_1e^{c_2t}\leq \E\left[(u(t,x))^2\right]\leq c_3e^{c_4t},$$
where $c_1,\ldots, c_4$ are unspecified positive constants. However, it will be clear from the our considerations that our setting is amenable to precise moments asymptotics similar to \cite{Ch14, Ch17,HHNT}.
\end{enumerate}

\noindent
Overall our paper has to be seen as a contribution setting up the basics of a full stochastic analysis for the parabolic Anderson model on $\Heis$. Our considerations are based on Malliavin calculus and stochastic integration for random fields. The main novelty in our method is to incorporate advanced tools of analysis on Heisenberg groups (reverse Poincar\'{e} inequality, small ball probabilities for the Brownian motion to quote a few) into this framework. In particular, we made an extensive use of the projective approach to Fourier analysis on $\Heis$, recently advocated in \cite{BCD}. This allows us to express many of our conditions in a neat and explicit way.

As mentioned above, one of our main goals in this project is to track down how the geometry for spaces of the form $\Heis$ can influence the global behavior of a system like the stochastic heat equation. As far as existence and uniqueness in the It\^{o} setting is concerned, one should compare our noise $\bW_\alpha$ to the closest family of Gaussian noises considered on $\R^d$ in the SPDE literature. Arguably this family is given by Bessel kernels (see e.g. \cite[Example 2.4]{HHNT}), which correspond to a covariance structure analogous to \eqref{intro: eq-cov-0<s<1}:
\[ 
\E\left[\left|\bW^{\R^d}_\alpha(\varphi)\right|^2\right]=\left\|(I-\Delta)^{-\alpha}\varphi\right\|^2_{L^2(\R^d)}.
\]
Observe that in Dalang's terminology, the spectral measure of $\bW_\alpha$ is given by $\nu(d\xi)=(1+|\xi|^2)^{2\alpha} \, d\xi$. It is then proved that one can solve the stochastic heat equation on $\R^d$ driven by $\bW^{\R^d}_\alpha$ if and only if $\alpha>\frac{d}{4}-\frac{1}{2}$. If one wishes to compare this condition in the flat space $\R^d$ with our framework in $\Heis$, we must consider the topological dimension of $\Heis$ given by $d=2n+1$. The condition $\alpha>\frac{n}{2}$ stated above for existence-uniqueness should thus be read as $\alpha>\frac{d}{4}-\frac{1}{4}$, as opposed to the $\alpha>\frac{d}{4}-\frac{1}{2}$ mentioned for the $\R^d$ case. Importantly enough, this discrepancy should be explained as an effect of the degeneracy coming from the sub-Riemannian structure of $\Heis$. Indeed, if one replaces the topological dimension $d=2n+1$ by the Hausdorff dimension  $Q=2(n+1)$ (see Remark \ref{Rmk: Hurst parameter} below for further details, as well as a comparison with the fractional noises in \cite{LSSW}), then our condition $\alpha>\frac{n}{2}$ reads $\alpha>\frac{Q}{4}-\frac{1}{2}$. We thus go back to the condition on $\bW_\alpha$ alluded to in the flat case. Therefore it is clearly seen that some of the sub-Riemannian geometric structure of $\Heis$ do affect the behavior of stochastic heat equations in a crucial way. 

As mentioned above, our study of geometric features in the parabolic Anderson model calls for further developments. Among those, let us highlight the following natural questions:
\begin{enumerate}[wide, labelwidth=!, labelindent=0pt, label= \textbf{(\alph*)}]
\setlength\itemsep{.05in}

\item
Pathwise definition of stochastic heat equations in the stratonovich sense, which relies on the prior introduction of weighted Besov spaces on $\Heis$.

\item Higher order expansions and renormalization techniques for small values of $\alpha$ in the noise $\bW_\alpha$. This generalization requires cumbersome regularity structures techniques.

\item Definition of polymer measures related to equation \eqref{eq:pam}. Then one should study related exponents and disorder regimes affected by the geometry of $\Heis$. Related to this question, one would also like to derive sharp asymptotics for the moments of $u(t,x)$ as $t\to\infty$.

\item Extensions to more general underlying spaces. In particular, we believe that fractals might give a wide variety of exotic exponents for both PAM and polymer measures. 

\end{enumerate}

\noindent
We plan to tackle those issues in subsequent publications.  

The rest of the paper is organized as follows. First, in Section \ref{sec:preliminaries}, we introduce some basics about  Heisenberg groups and the related Brownian motion on $\Heis$.  For the convenience of readers, some preliminary material on the projective approach to Fourier analysis on $\Heis$ developed in \cite{BCD} is also be presented in this section. Section \ref{construction of noise} is then devoted to the construction of our noises $\dot\bW_{\al}$ on $\Heis$, together with a study of some elementary properties of this family of random fields. Finally, in Section \ref{existence and uniqueness}, we prove the existence and uniqueness of the solution to equation \eqref{eq:pam}. We also demonstrate a exponential upper and lower bound for the second moment of the solution.

\begin{notation}\label{notation-asymp} 
Throughout the paper, we sometimes write $a\lesssim b$ when there exists an unspecified constant $C$ such that $a\le Cb$. In the same way we write $a\asymp b$ if there exist $C_1$, $C_2$ such that $C_1 b\le a\le C_2 b$. 

\end{notation}

\section{Preliminaries}\label{sec:preliminaries}

In this section we first recall some basic facts about Brownian motions on the $(2n+1)$-dimensional isotropic Heisenberg group $\Heis$, that are associated to its sub-Riemannian structure. We also include a short introduction about Fourier analysis on $\Heis$.  
\subsection{The Heisenberg group}\label{sec:heisenberg-group}

The Heisenberg group $\Heis$ is one of the simplest example of manifold with a sub-Riemannian structure. As mentioned in the introduction, it can be identified with $\R^{2n}\times \R$, equipped with the group multiplication: 
\[
(x,y,z)\star (x',y',z'):=\left(x+x', y+y', z+z'+2\omega((x,y),(x',y')) \right),\quad (x,y), (x',y')\in \R^{2n}
\]
where $\omega:\R^{2n}\times\R^{2n}\to\R$, $\omega((x,y),(x',y'))=\sum_{i=1}^n x_i'y_i-x_iy_i'$ is the standard symplectic form on $\R^{2n}$.
The identity in $\Heis$ is $e=(0,0,0)$ and the inverse is given by $(x,y,z)^{-1}=(-x,-y,-z)$. Its Lie algebra $\mathfrak{h}$ can be identified with $\R^{2n+1}$ with the Lie bracket given by
\begin{equation*}
[(a,b, c), (a', b', c')]=\left(0, 0, 2 \omega((a,b), (a',b'))\right), \quad (a,b), (a',b')\in \R^{2n}.
\end{equation*}
Clearly $\mathfrak{h}$ can be identified with the tangent space $T_e(\Heis)$. A basis of left invariant vector fields at $p=(x,y,z)$ is given as
\begin{equation}\label{eq-basis}
X_i(p)=\partial_{x_i}+2y_i\partial_{z}, \quad Y_{i}(p)=\partial_{y_i}-2x_i\partial_{z}, \quad Z(p)=\partial_{z}, \quad i=1,\dots, n.
\end{equation}

In the basis \eqref{eq-basis} we then distinguish a horizontal bundle $\mathcal{D}_p:=\mathrm{Span}\{X_i(p), Y_i(p), i=1,\dots, n\}$, for all $p\in \Heis$. We recall that $\mathcal{D}$ satisfies the basic and fundamental condition $[X_i,Y_i]=Z$ for all $1\le i\le n$.
It is often more geometrically meaningful to describe the Heisenberg group as a sub-Riemannian manifold by equipping the horizontal bundle $\mathcal{D}_p$, $p\in \Heis$ with an inner product such that $\{X_i(p), Y_i(p), i=1,\dots, n\}$ is an orthonormal frame at $p$. The associated  horizontal sub-Laplacian is then given by
\begin{equation}\label{eq-Laplacian}
\Delta=\sum_{i=1}^nX_i^2+Y_i^2.
\end{equation}
Also notice that $\mu$ will designate the Haar measure on $\Heis$, which is nothing else but  the Lebesgue measure on $\mathbb R^{2n+1}$. 

In the remainder of the article we will investigate the invariance of our noises under some natural families of transformations on $\Heis$. More specifically, the 3 families we will consider are the following:
\begin{enumerate}[wide, labelwidth=!, labelindent=0pt, label=\text{(\roman*)}]
\setlength\itemsep{.00in}

\item 
The dilations $\Heis\to \Heis$, which are defined as a family $\{\delta_\lambda;\lambda>0\}$ given by
\begin{equation}\label{eq-dil-delta}
\delta_\lambda(x,y,z)=(\lambda x, \lambda y, \lambda^2 z),\quad\textrm{for any } (x,y,z)\in\Heis.
\end{equation}

\item
The horizontal rotations $R_\theta$, defined by  
\begin{equation}\label{eq-rotation}
R_\theta(x,y,z)=((\cos\theta) x+(\sin\theta) y, -(\sin\theta) x+ (\cos\theta)y,z).
\end{equation}
Here for any $x\in \R^n$, the vector $(\cos\theta) x$ is given by $((\cos\theta) x_1, \dots (\cos\theta) x_n)$.\\

\item
Eventually, for any $x\in\Heis$, we denote by $L_x:\Heis\to\Heis$ the left translation
\begin{equation}\label{eq-left-transl}
L_xy:=xy,\quad\text{for all} \quad y\in \Heis.
\end{equation}
\end{enumerate}

\subsection{Brownian motion on $\Heis$}\label{intro BM on Heis}
In the sequel we will write $\{\mathcal{B}_t\}_{t\ge0}$ for the Brownian motion on $\Heis$.  It is common to define $\mathcal{B}$ as a Markov process with generator $\frac12\Delta$. Given a standard Brownian motion $(B, \beta)$ in $\R^{2n}$, $\mathcal{B}$ can also be realized as the solution of the following SDE interpreted  in the Stratonovich sense:
\begin{equation}\label{eq-BM-sde}
d\BH_t=\sum_{i=1}^nX_i(\BH_t)\circ dB^i_t+Y_i(\BH_t)\circ d\beta^i_t,
\end{equation}
with initial condition $\BH_0=e$. Equation \eqref{eq-BM-sde} can be solved explicitly and for $t\ge0$ we obtain 
\begin{equation}\label{eq-BM}
\BH_t=\left(B_t, \beta_t,A_t\right) ,
\quad\text{where}\quad
A_t=2 \sum_{i=1}^n\int_0^t{B^i_s}d\beta^i_s-\beta^i_sdB_s^{i} .
\end{equation}
Notice that the process $A_t$ in \eqref{eq-BM} is usually referred to as the L\'evy area of $(B,\beta)$.

The density for the distribution of $\BH_t$ at time $t>0$ is given by the heat kernel $p_t$. When issued from the identity $e$ and working with the Haar measure $\mu$ on $\Heis$, a result by Gaveau~\cite{Gaveau} based on the L\'evy area formula for the characteristic function of $A_t$ yields the following semi-explicit form for $p_t$: for any $q=(x,y,z)\in \Heis$ we have
\begin{equation}\label{eq-Heis-kernel}
p_t(q)=\frac{1}{(2\pi t)^{n+1}}\int_\R e^{i\frac{\lambda z}{t}}\left(\frac{2\lambda}{\sinh{2\lambda}}\right)^{n}\exp\left(-\frac{\lambda}{t} |(x,y)|^2\coth (2\lambda) \right)d\lambda,
\end{equation}
where $|(x,y)|^2= x_1^2+\cdots +x_n^2+y_1^2+\cdots +y_n^2$.
As a consequence of our presentation \eqref{eq-Heis-kernel}, one can derive invariances of $p_t$ with respect to the transformations on $\Heis$ introduced in Section~\ref{sec:heisenberg-group}. We label this result here for further use.

\begin{lemma}
Let $p_t$ be the heat kernel defined by \eqref{eq-Heis-kernel}. Then the following invariance properties hold true.
\begin{enumerate}[wide, labelwidth=!, labelindent=0pt, label=\text{(\roman*)}]
\setlength\itemsep{.00in}

\item 
For the dilations $\delta_\lambda$ in \eqref{eq-dil-delta}, we have
\begin{equation}\label{eq-kernel-dil}
p_t\circ \delta_\lambda=\frac{1}{\lambda^{2(n+1)}}p_{\frac{t}{\lambda^2}}.
\end{equation}
\item
For the rotations $R_\theta$ in \eqref{eq-rotation}, it holds that
\begin{equation}\label{eq-kernel-rot}
p_t\circ R_\theta=p_t.
\end{equation}
\item
For the translations $L_x$ introduced in \eqref{eq-left-transl}, we get
\begin{equation}\label{eq-kernel-transl}
p_t\circ L_{x}=p_t.
\end{equation}
\end{enumerate}
\end{lemma}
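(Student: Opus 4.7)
The plan is to verify each of the three invariances separately, relying on Gaveau's explicit formula \eqref{eq-Heis-kernel} for parts (i) and (ii), and on the left-invariance of the generator $\tfrac{1}{2}\Delta$ (equivalently, of the SDE \eqref{eq-BM-sde}) for part (iii). In all three cases the proof reduces to a direct substitution followed by a short symmetry observation, so no delicate estimate is involved; the only mild annoyance is a notation clash in \eqref{eq-Heis-kernel}, where $\lambda$ is simultaneously the integration variable and the dilation parameter in \eqref{eq-dil-delta}, so I will write the integration variable as $\xi$ throughout.

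For (i), I would plug $\delta_\lambda(x,y,z)=(\lambda x,\lambda y,\lambda^{2}z)$ directly into \eqref{eq-Heis-kernel} and observe that the two spatial factors of the integrand transform as $e^{i\xi z/t}\mapsto e^{i\xi\lambda^{2}z/t}$ and $\exp\!\bigl(-\tfrac{\xi}{t}|(x,y)|^{2}\coth 2\xi\bigr)\mapsto\exp\!\bigl(-\tfrac{\xi\lambda^{2}}{t}|(x,y)|^{2}\coth 2\xi\bigr)$. Comparing this with $p_{t/\lambda^{2}}(x,y,z)$, for which the ratio $t/\lambda^{2}$ appears in exactly those two positions, the integrals coincide while the prefactors $(2\pi t)^{-(n+1)}$ and $(2\pi t/\lambda^{2})^{-(n+1)}$ differ by precisely the $\lambda^{-2(n+1)}$ of \eqref{eq-kernel-dil}. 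As a sanity check, the same identity follows probabilistically from \eqref{eq-BM}: Brownian scaling gives $B_{t}\stackrel{d}{=}\lambda B_{t/\lambda^{2}}$ and $\beta_{t}\stackrel{d}{=}\lambda\beta_{t/\lambda^{2}}$, which, combined with the bilinearity of the L\'evy area, implies $A_{t}\stackrel{d}{=}\lambda^{2}A_{t/\lambda^{2}}$, hence $\BH_{t}\stackrel{d}{=}\delta_\lambda\BH_{t/\lambda^{2}}$.

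For (ii), I would note that the integrand of \eqref{eq-Heis-kernel} depends on the horizontal coordinates only through $|(x,y)|^{2}=\sum_{i=1}^{n}(x_{i}^{2}+y_{i}^{2})$, and on $z$ only through the oscillatory factor $e^{i\xi z/t}$. Since $R_\theta$ acts on each pair $(x_{i},y_{i})$ by an orthogonal rotation and leaves $z$ untouched, both quantities are preserved, and \eqref{eq-kernel-rot} holds pointwise in the integrand.

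For (iii), the identity must be read in the two-point sense: defining the transition density as $p_{t}(p,q):=p_{t}(p^{-1}q)$, the claim is $p_{t}(L_{x}p,L_{x}q)=p_{t}(p,q)$ for every $x\in\Heis$. This follows because the fields $X_{i},Y_{i}$ in \eqref{eq-basis} are left-invariant, so $\tfrac{1}{2}\Delta$ commutes with $L_{x}$ and \eqref{eq-BM-sde} is left-invariant; left-invariance of the Haar measure $\mu$ then guarantees that no Jacobian factor enters. The main obstacle, as far as I can see, is not technical but notational -- reconciling the compact shorthand $p_{t}\circ L_{x}=p_{t}$ with the correct two-point interpretation -- and once that is settled, each of the three properties follows from a two- or three-line argument.
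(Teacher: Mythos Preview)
Your proposal is correct and follows essentially the same route as the paper: direct inspection of Gaveau's formula \eqref{eq-Heis-kernel} for (i) and (ii), with (iii) following from left-invariance. The paper's proof is in fact much terser than yours---it checks (i) in one line and dismisses (ii) and (iii) as ``obtained similarly''---so your additional probabilistic sanity check for (i) and your explicit clarification of the two-point reading of (iii) are extra detail rather than a different argument.
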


\begin{proof}
Write $p_{t/\lambda^2}$ according to the semi-explicit formula \eqref{eq-Heis-kernel}. Then it is readily checked that 
\[
p_{\frac{t}{\lambda^2}}(x,y,z)=\lambda^{2(n+1)}p_t(\lambda x, \lambda y, \lambda^2 z),
\]
which proves our claim \eqref{eq-kernel-dil}. Relations \eqref{eq-kernel-rot} and \eqref{eq-kernel-transl} are obtained similarly.
\end{proof}
In order to get proper bounds on the kernel defined by \eqref{eq-Heis-kernel}, we first introduce a distance on $\Heis$ that accommodates the sub-Riemannian structure mentioned in Section~\ref{sec:heisenberg-group}. To this end, we call a path $\gamma:[0,1]\to \Heis$ horizontal if it is absolutely continuous and $\dot{\gamma}(t)\in \ch_{\gamma(t)}$ for all $t\in [0,1]$. The Carnot-Carath\'eodory distance on $\Heis$ is then defined as
\begin{equation}\label{eq-cc-dist}
d_{cc}(p_1,p_2):=\inf\bigg\{\int_0^1 |\dot{\gamma}(t)|_{\ch}dt \,
 ; \, 
 \gamma:[0,1]\to \Heis \ \mbox{is horizontal}, \ \gamma(0)=p_1,\gamma(1)=p_2\bigg\}.
\end{equation}
In fact the CC-distance is equivalent to the so-called homogeneous distance on $\Heis$. Specifically, there exist $C_1,C_2>0$ such that 
\begin{equation}\label{eq-cc-dist-homo}
C_1(\sqrt{|(x,y)|^2}+|z|^{\frac12})\le d_{cc}(e, q)\le C_2(\sqrt{|(x,y)|^2}+|z|^{\frac12})).
\end{equation}
A striking property of the CC-distance is that the Hausdorff dimension of the metric space $(\Heis,  d_{cc})$ is equal to $2n+2$ and thus strictly greater  than the topological dimension of $\Heis$ which is $2n+1$.

With those definitions in hand, it is known that the heat kernel $p_t$ defined by \eqref{eq-Heis-kernel} satisfies some Gaussian type upper and lower bounds (see references \cite{BG, Li07}). Namely there exist $c_1,c_2,c_3,c_4>0$ such that for every $t>0$ and $q=(x,y,z)$ we have
\begin{equation}\label{eq-heat-kernel-bds}
\frac{c_{1}}{t^{n+1}}\exp\biggl(-\frac{c_{2}}{t}  \, d_{cc}(e,q)^{2}\biggr) 
 \le p_{t}(q)
 \leq \frac{c_{3}}{t^{n+1}}\exp\biggl(-\frac{c_{4}}{t}  \, d_{cc}(e,q)^{2}\biggr)  .
\end{equation}

\subsection{Fourier analysis and Sobolev spaces}\label{sec-fpower-lap}

The sub-Laplacian $\Delta$ defined by \eqref{eq-Laplacian}  is essentially self-adjoint on the space of smooth and compactly supported functions $\mathcal{C}_0^\infty(\mathbb{R}^{2n+1})$, see \cite{MR3587668}. Therefore it admits a unique self-adjoint extension, which we still denote by $\Delta$, that is defined on a dense subspace of $L^2(\mathbb{R}^{2n+1},\mu)$, where $\mu$ designates as before the Lebesgue measure on $\R^{2n+1}$.

The above considerations lead to a spectral representation of $\Delta$ (for instance see \cite{RS}). Namely from the spectral theorem, there exist a measure space $(\Lambda, \nu)$, a unitary map $U: L^2(\Heis,\mu) \to L^2 (\Lambda, \nu) $ and  a non negative real valued measurable function $m$ on $\Lambda$ such that
\begin{equation}\label{eq-spectral}
U \Delta U^{-1} g (\lambda)=-m(\lambda) g(\lambda), 
\end{equation}
for all $\lambda \in \Lambda$, and $g\in L^2 (\Lambda, \nu)$ such that $U^{-1}g \in \dom(\Delta)$. Note that given $g \in  L^2 (\Lambda,\nu)$, we have that $U^{-1}g$ belongs to $\dom(\Delta)$ if only if 
\begin{equation}\label{eq-L2-cond}
\int_{\Lambda} m(\lambda)^2 g(\lambda)^2 \nu\left(d\lambda\right) <+\infty.
\end{equation}
 The spectral decomposition \eqref{eq-spectral} allows for the definition of fractional powers of $\Delta$. That is for $\alpha \in \mathbb{R}$, we define the fractional sub-Laplacian $(-\Delta)^{-\alpha}$ as an unbounded and densely defined operator on $L^2(\Heis,\mu)$ defined by
\[
U (-\Delta)^{-\alpha} U^{-1} g (\lambda)=m(\lambda)^{-\alpha} g(\lambda).
\]
The domain of $(-\Delta)^{-\alpha}$ is a Sobolev space denoted by $\mathcal{W}^{-\alpha,2}$ and defined as 
\begin{equation}\label{eq-soblev-sp}
\mathcal{W}^{-\alpha,2}=\bigg\{U^{-1}g:\Heis\to \R; \int_\Lambda m(\lambda)^{-2\alpha}g(\lambda)^2\nu(d\lambda)<\infty\bigg\}.
\end{equation}

The construction \eqref{eq-soblev-sp} for the Sobolev space $\mathcal{W}^{-\alpha,2}$ is quite abstract. A more concrete and explicit version of $\mathcal{W}^{-\alpha,2}$ can be described through Fourier transforms on the Heisenberg group. Below we will follow~\cite{BCD} for this alternative construction. Indeed, it is well-known that all irreducible representations of the Heisenberg group $\Heis$ are unitary equivalent to the Schr\"odinger representations $(U^\lambda)_{\lambda \in \mathbb{R}}$, that is the family of group morphisms $q=(x,y,z) \in \Heis \to U_q^\lambda$ between $\Heis$ and the unitary group of $L^2(\R^{n})$ defined by
\begin{equation}\label{eq-uni-repre}
U_q^\lambda u (\xi) = e^{-i\lambda\left( z+2x\cdot(\xi- y) \right) } u \left( \xi -2y\right), \quad \xi \in \mathbb R^n
\end{equation}
The group Fourier transform of a function $f \in L^1(\Heis)$ is defined for each $\lambda \in \R\setminus\{0\}=:\R^*$ as the operator valued function on $L^2(\mathbb R)$ given by
\begin{equation}\label{eq-Fourier-transf}
\mathcal{F}(f) (\lambda)=\int_{\Heis} f(q)U_{q}^\lambda \,  d\mu(q) .
\end{equation}
Clearly $\mathcal{F}(f) $ takes values in the space of bounded operators on $L^2(\R^n)$.
If in addition $f \in L^2(\Heis)$, $\mathcal{F}(f) (\lambda)$ is a Hilbert-Schmidt operator and the following Plancherel theorem holds,
\begin{equation}\label{eq:plancherel-operator}
\int_{\Heis} |f(q)|^2 d\mu(q)=\frac{2^{n-1}}{\pi^{n+1}}\int_{-\infty}^{+\infty}\| \mathcal{F}(f) (\lambda) \|_{\textsc{hs}}^2 
\, |\lambda |^n 
\, d\lambda .
\end{equation}

The fact that representations of the Heisenberg group are operator-valued leads to cumbersome considerations for the Fourier transform. 
In order to get a more tractable version of Fourier type computations, a projective point of view is advocated in~\cite{BCD}. Namely for $\lambda \in \mathbb R^*$ and $k \in \mathbb N^n$, we consider the family of Hermite functions
\[
\Phi_k^\lambda (x)= |\lambda|^{n/4} \Phi_k \lp \sqrt{|\lambda|} x\rp, \quad x \in \mathbb R^n,
\]
where $ \Phi_k$ is the normalized Hermite function on $\mathbb R^n$ which is an eigenfunction of the Schr\"odinger operator $H=-\sum_{j=1}^n\frac{d^2}{dx_j^2}+ |x|^2$ with eigenvalue $2|k|+n$. Here for any multi-index integer $k=(k_1,\dots, k_n)\in \N^n$ we define $k!:=k_1!\cdots k_n!$ and $|k|:=k_1+\cdots +k_n$. This family is an orthonormal basis of $L^2(\mathbb R^n)$, therefore we get
\begin{equation}\label{eq:projective-hs-norm}
\| \mathcal{F}(f) (\lambda) \|_{\textsc{hs}}^2=\sum_{k\in \N^n}   \| \mathcal{F}(f) (\lambda)  \Phi_k^\lambda \|_{L^2(\R^n)}^2.
\end{equation}
Identity \eqref{eq:projective-hs-norm} leads us to introduce the following projective definition of the Fourier transform.

\begin{definition}\label{def:projective-fourier}
Let $f$ be an element of $L^2(\Heis)$.
If $(m,\ell,\lambda) \in \mathbb N^n \times  \mathbb N^n  \times \mathbb R^*$ we denote 
\begin{equation}\label{eq-F-trans-decomp}
\hat f (m,\ell,\lambda)= \langle \mathcal{F}(f) (\lambda) \Phi_m^\lambda , \Phi_\ell^\lambda \rangle_{L^2(\R^n)} .
\end{equation}
We will call $\hat f $ the Fourier transform of $f$.
\end{definition} 
A few remarks about Definition~\ref{def:projective-fourier} are in order. First, notice that in \eqref{eq-F-trans-decomp}, the Fourier transform is now complex valued as in $\R^{n}$. Next observe that the family $U$ defined by \eqref{eq-uni-repre} satisfies $U_e^\lambda=\text{Id}$. Hence the projective Fourier transform of the Dirac distribution at the identity $e$ is given by:
\[
\hat {\delta_e} (m,\ell,\lambda)=\langle \mathcal{F}(\delta_e) (\lambda) \Phi_m^\lambda , \Phi_\ell^\lambda \rangle_{L^2(\R^n)}= \langle U_e^\lambda (\Phi_m^\lambda ), \Phi_\ell^\lambda \rangle_{L^2(\R^n)}=\delta_{m,\ell},
\]
where  $\delta_{m,\ell}=1$ if $m =\ell$ and $0$ otherwise. Moreover, with Definition \ref{def:projective-fourier} in hand, the Plancherel formula \eqref{eq:plancherel-operator} takes a more familiar shape:
\begin{align}\label{proj Plancherel}
\int_{\Heis} |f(q)|^2 dq=\frac{2^{n-1}}{\pi^{n+1}}\sum_{m,\ell\in \N^n} 
\int_{-\infty}^{+\infty}  | \hat f (m,\ell,\lambda)|^2  |\lambda |^n d\lambda .
\end{align}
In addition, the Fourier inversion formula for $q=(x,y,z)$ is also obtained by integrating complex valued functions. Namely we get
\[
f(q) = \sum_{m,\ell\in \N^n}\int_{\mathbb{R}} e^{i\lambda z} K_{m,\ell,\lambda} (q) \hat f (m,\ell,\lambda)d\lambda ,
\]
where $K_{m,\ell,\lambda} (q)$  is a kernel that can be expressed in terms of the Hermite functions (see ~\cite{BCD}).

Definition \ref{def:projective-fourier} enables a convenient notion of fractional sub-Laplacians and Sobolev spaces. This construction stems from the fact that $\Delta$ acts nicely on projective Fourier transforms, leading to the following simple formula for a function $f$ in the Schwartz space $\mathcal{S}(\mathbb R^{2n+1})$ of rapidly decreasing functions on $\mathbb R^{2n+1}$:
\begin{equation}\label{eq:Laplace-on-Fourier}
\widehat{\Delta f}(m,\ell,\lambda)= - 4 \, |\lambda| (2|m|+n) \, \widehat{f}(m,\ell,\lambda),
\quad\text{for}\quad
(m,\ell,\lambda) \in \mathbb N^n \times  \mathbb N^n  \times \mathbb R^*.
\end{equation}
If we denote by $(P_t)_{t \ge 0}$ the semigroup generated by $\Delta$, then formula \eqref{eq:Laplace-on-Fourier} shows that  for a function $f$ in the Schwartz space $\mathcal{S}(\mathbb R^{2n+1})$,
\[
\widehat{P_tf}(m,\ell,\lambda)= e^{- 4t \, |\lambda| (2|m|+n)} \, \widehat{f}(m,\ell,\lambda).
\]
It follows that if, as before, $p_t$ denotes the heat kernel issued from the identity $e$, then
\begin{align}\label{HeatKernelFourier}
\widehat{p}_t (m,\ell,\lambda)= e^{- 4t \, |\lambda| (2|m|+n)} \delta_{m,\ell}.
\end{align}
Starting from \eqref{eq:Laplace-on-Fourier} and considering $f \in \mathcal{S}(\mathbb R^{2n+1})$, it is thus natural to define $(\Delta)^{-\alpha}f$ for $\al\in(0,1)$ through its Fourier transform: for $(m,\ell,\lambda) \in \mathbb N^n \times  \mathbb N^n  \times \mathbb R^*$ we have
\begin{equation}\label{Fourier transform Laplace}
\widehat{(-\Delta)^{-\alpha}f}(m,\ell,\lambda)=4^{-\alpha} |\lambda|^{-\alpha} (2|m|+n)^{-\alpha} \widehat{f}(m,\ell,\lambda) \, .
\end{equation}
One can then check that $(-\Delta)^{-\alpha}f $ sits in $\mathcal{S}(\mathbb R^{2n+1})$. Related to our fractional Laplacians, the Sobolev space $\mathcal{W}^{-\alpha,2}$ can then be described as
\begin{equation}\label{eq:frac-sobolev-fourier}
\mathcal{W}^{-\alpha,2}= \left\{ f \in L^2(\Heis), \int_{-\infty}^{+\infty} \left(\sum_{m,\ell\in \N^n} |\lambda|^{-2\alpha} (2|m|+n)^{-2\alpha}  \hat f (m,\ell,\lambda)^2 \right) |\lambda |^n d\lambda <+\infty   \right\}  .
\end{equation}

Let us finish this section by some estimates for the fractional Laplacian $(-\Delta)^{-\alpha}$. Specifically, invoking the Gaussian estimates \eqref{eq-heat-kernel-bds}, it can be shown (as in \cite{BL}) that for $0<\alpha<n+1 $ the operator $(-\Delta)^{-\alpha}$ admits a kernel $G_\alpha (q_1,q_2)$ which is given by:
\begin{align}\label{green function}
G_\alpha(q_1,q_2)=\frac{1}{\Gamma(s)} \int_0^{+\infty} t^{\alpha-1} p_t(q_1,q_2) \, dt, 
\quad\text{for all}\quad
 q_1,q_2 \in \Heis.
\end{align}
Then using the representation \eqref{green function}, one can obtain the following bound for the kernel $G_\alpha$.
\begin{lemma}
Let $0<\alpha<n+1 $ and consider $G_{\al}$ defined by \eqref{green function}. Then there exist constants $c,C>0$ such that for $\mu$ a.e. $ q_1,q_2 \in \Heis$, we have
\begin{equation}\label{eq-kernel-cc}
\frac{c}{d_{cc}(q_1,q_2)^{2(n+1-\alpha)}} \le G_\alpha(q_1,q_2) \le \frac{C}{d_{cc}(q_1,q_2)^{2(n+1-\alpha)}}.
\end{equation}
\end{lemma}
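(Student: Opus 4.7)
The strategy is to plug the two-sided Gaussian heat kernel estimates \eqref{eq-heat-kernel-bds} directly into the subordination formula \eqref{green function} and evaluate the resulting one-dimensional integrals in closed form via a Gamma function substitution.

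First I would use the left-translation invariance \eqref{eq-kernel-transl} of $p_t$ to reduce to the case $q_1 = e$, writing $p_t(q_1,q_2) = p_t(q_1^{-1}q_2)$, and set $d = d_{cc}(q_1,q_2) = d_{cc}(e,q_1^{-1}q_2)$. Then inserting the upper bound from \eqref{eq-heat-kernel-bds} into \eqref{green function} gives
\begin{equation*}
G_\alpha(q_1,q_2) \;\le\; \frac{c_3}{\Gamma(\alpha)} \int_0^{+\infty} t^{\alpha-n-2} \exp\!\left(-\frac{c_4 d^2}{t}\right) dt.
\end{equation*}
The substitution $u = c_4 d^2/t$ (so that $dt = -c_4 d^2 u^{-2} du$) turns this into
\begin{equation*}
\frac{c_3}{\Gamma(\alpha)}\, (c_4 d^2)^{\alpha-n-1} \int_0^{+\infty} u^{n-\alpha} e^{-u}\, du \;=\; \frac{c_3\, c_4^{\alpha-n-1}\,\Gamma(n+1-\alpha)}{\Gamma(\alpha)}\cdot \frac{1}{d^{2(n+1-\alpha)}}.
\end{equation*}
The integral converges at $0$ precisely because the hypothesis $\alpha < n+1$ guarantees $n-\alpha > -1$, and convergence at infinity is automatic from the exponential; these are exactly the two places where the range of $\alpha$ matters. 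The lower bound is obtained by the identical computation, starting from the Gaussian lower bound in \eqref{eq-heat-kernel-bds} with constants $c_1,c_2$ in place of $c_3,c_4$.

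There is no real obstacle here beyond bookkeeping: the whole point of the fractional integral representation \eqref{green function} is that it converts spatial decay of $G_\alpha$ into a scaling identity for the $t$-integral against Gaussian bounds. The one caveat worth verifying explicitly is that the formula \eqref{green function} truly holds $\mu$-a.e. on $\Heis \times \Heis$; this follows from Fubini applied to the spectral identity $(-\Delta)^{-\alpha} = \frac{1}{\Gamma(\alpha)}\int_0^\infty t^{\alpha-1} e^{t\Delta}\,dt$ together with positivity of $p_t$, exactly as in the Euclidean Riesz-kernel argument of \cite{BL}, which is why the bounds only need to hold for $\mu$ a.e.\ $q_1,q_2$.
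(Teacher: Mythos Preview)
Your proposal is correct and follows essentially the same approach as the paper: both invoke left-invariance to reduce to $p_t(q_1^{-1}q_2)$, plug the two-sided Gaussian bounds \eqref{eq-heat-kernel-bds} into the subordination formula \eqref{green function}, and evaluate the resulting $t$-integral. You are in fact more explicit than the paper, which simply writes the final equality $=C/d_{cc}(q_1,q_2)^{2(n+1-\alpha)}$ without spelling out the substitution $u=c_4 d^2/t$ and the appearance of $\Gamma(n+1-\alpha)$.
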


\begin{proof}
We invoke the left-invariant property of both $p_t$ (see \eqref{eq-kernel-transl}) and $d_{cc}$. Then we integrate the heat kernel bounds \eqref{eq-heat-kernel-bds} into \eqref{green function}. We obtain that
\begin{align*}
G_\alpha(q_1,q_2)&\le \frac{1}{\Gamma(s)} \int_0^{+\infty} t^{\alpha-1} p_t(q_1^{-1}q_2) \, dt \\
&\le C'\int_0^{+\infty} t^{\alpha-n-2}  \exp\biggl(-\frac{c_{4}}{t}  \, d_{cc}(e,q_1^{-1}q_2)^{2}\biggr) dt\\
&= \frac{C}{d_{cc}(q_1,q_2)^{2(n+1-\alpha)}}
\end{align*}
for some constants $C,C'>0$. We can show the lower bound in a similar way. Hence we obtain the conclusion.
\end{proof}

\section{Description of the fractional  noise $\mathbf{\dot{W}}$}\label{construction of noise}
A good understanding of \eqref{eq:pam} relies on a proper description of the driving noise $\mathbf{\dot{W}}$. We carry out this task here, including cases where $\mathbf{\dot{W}}$ is distribution-valued as well as  function-valued. Our spatial covariance functions will all be based on fractional powers of $\Delta$ as introduced in Section \ref{sec-fpower-lap}. More specifically we consider a noise on $\R_+\times \Heis$ which is white in time and colored in space, similarly to what is done in $\R^d$ (see for instance \cite{Da} and \cite{PZ}). For sake of conciseness, the case of a  colored noise in time (cf. e.g \cite{HHNT}) is postponed to a subsequent publication. In this context, given a non-negative definite function 
 $\Lambda: \Heis\to\R_+$, the noise $\bW$ will be thought of as a centered Gaussian family $\{\bW(\varphi); \varphi\in \cd\}$ for a certain subset $\cd$ of functions on $\R_+\times \Heis$. The covariance of $\bW$ is then given by
\begin{equation}\label{eq-cov}
\bE\left( \bW(\varphi)\bW(\psi)\right)=\int_{\R_+}\int_{\Heissq}\varphi(t,q_1)\psi(t,q_2)\Lambda(q_2^{-1}q_1)d\mu (q_1)d\mu (q_2)dt,
\end{equation}
where we recall that $\mu$ stands for the Haar measure on $\Heis$.
As mentioned above, we will now introduce two important types of covariance functions, generated by powers of the sub-Laplacian $\Delta$ on $\Heis$.

\subsection{Negative powers of the sub-Laplacian with $\mathbf{0<\alpha<\frac{n+1}2}$}\label{sec:s-less-than-1}

In this section we consider a distribution valued noise $\mathbf{\dot{W}}_{\al}$ whose covariance function is generated by $(-\Delta)^{-\alpha}$, where we recall that $\Delta$ is defined by \eqref{eq-Laplacian}. Namely we wish the  covariance function of $\mathbf{\dot{W}}_\alpha$ to be given by 
\begin{align}\label{eq-cov-0<s<1}
\bE\left( \bW_{\!\al}(\varphi)\bW_{\!\al}(\psi)\right)&=\int_{\R_+}\int_{\Heis} (-\Delta)^{-\alpha} \varphi (t,q_1) (-\Delta)^{-\alpha} \psi (t,q_1) \, d\mu(q_1) dt .
\end{align}
Notice that if such a covariance function exists, it is obviously a positive definite function according to expression \eqref{eq-cov-0<s<1}. We now state a proposition ensuring the existence of $\mathbf{\dot{W}}$ as a Gaussian family. 
\begin{proposition}\label{prop-field-W}
Let $\alpha$ be a regularity parameter in  $\left(0,\frac{n+1}2\right)$. We also introduce a Hilbert space $\mathcal{H}$ as 
\begin{align}\mathcal{H}= L^2(\R_+;\mathcal{W}^{-\alpha,2}),\label{def mathscrH}
\end{align} 
where $\mathcal{W}^{-\alpha,2}$ is the space defined by \eqref{eq:frac-sobolev-fourier}. Then the covariance function~\eqref{eq-cov-0<s<1} defines an isonormal centered family $\{\bW_{\!\al}(\varphi);$ $\varphi\in\mathcal{H} \}$. Moreover, 
\begin{enumerate}[wide, labelwidth=!, labelindent=0pt, label=\textnormal{(\arabic*)}]
\setlength\itemsep{.05in}

\item 
The covariance $\bE\left( \bW_{\!\al}(\varphi)\bW_{\!\al}(\psi)\right)$ admits the representation \eqref{eq-cov}, with $\Lambda=G_{2\alpha}$ that is given by \eqref{green function}.
\item 
If $\varphi,\psi$ are two non negative functions in $\mathcal{H}$ we have 
\[
\bE\left( \bW_{\!\al}(\varphi)\bW_{\!\al}(\psi)\right)\asymp \int_{\R_+}\int_{\Heissq} \frac{\varphi (t,q_1) \psi (t,q_2)}{d_{cc}(q_1,q_2)^{2n+2-4\alpha}} \, dt\,d\mu(q_1) d\mu(q_2).
\]
where we recall that the symbol $\asymp$ is introduced in Notation \ref{notation-asymp}.
\end{enumerate}
\end{proposition}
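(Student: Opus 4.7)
The plan is to first verify that the right-hand side of \eqref{eq-cov-0<s<1} defines a bona fide positive semi-definite bilinear form on $\mathcal{H} = L^2(\R_+;\mathcal{W}^{-\alpha,2})$, and then to extract the two additional properties by Fourier-analytic and kernel-based computations. First, I would observe that by the definition of $\mathcal{W}^{-\alpha,2}$ in \eqref{eq:frac-sobolev-fourier} together with the Plancherel identity \eqref{proj Plancherel}, the quantity $\|\varphi(t,\cdot)\|_{\mathcal{W}^{-\alpha,2}}^2 = \|(-\Delta)^{-\alpha}\varphi(t,\cdot)\|_{L^2(\Heis)}^2$ is finite for a.e.\ $t$ when $\varphi \in \mathcal{H}$, and the bilinear form in \eqref{eq-cov-0<s<1} is precisely the inner product of $\mathcal{H}$. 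Existence of a centered isonormal Gaussian family $\{\bW_\alpha(\varphi) : \varphi \in \mathcal{H}\}$ with this covariance then follows from the standard Kolmogorov construction on any separable Hilbert space.

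For part (1), the key step is to rewrite \eqref{eq-cov-0<s<1} using the self-adjointness of $(-\Delta)^{-\alpha}$ on $L^2(\Heis,\mu)$, which is guaranteed by the spectral construction in Section \ref{sec-fpower-lap}:
\begin{equation*}
\int_{\Heis} (-\Delta)^{-\alpha}\varphi(t,q_1)\, (-\Delta)^{-\alpha}\psi(t,q_1)\,d\mu(q_1) = \int_{\Heis} \varphi(t,q_1)\, (-\Delta)^{-2\alpha}\psi(t,q_1)\,d\mu(q_1).
\end{equation*}
Since $\alpha \in (0,\tfrac{n+1}{2})$ gives $2\alpha \in (0,n+1)$, the kernel representation \eqref{green function} is valid for the operator $(-\Delta)^{-2\alpha}$, yielding
\begin{equation*}
(-\Delta)^{-2\alpha}\psi(t,q_1) = \int_{\Heis} G_{2\alpha}(q_1,q_2)\,\psi(t,q_2)\,d\mu(q_2).
\end{equation*}
Finally, the left-invariance of the heat kernel recorded in \eqref{eq-kernel-transl} implies that $G_{2\alpha}(q_1,q_2)$ depends only on $q_2^{-1}q_1$, so that setting $\Lambda(q) := G_{2\alpha}(e,q)$ produces the representation \eqref{eq-cov} with $\Lambda = G_{2\alpha}$. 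This rewriting, incidentally, also makes the positive semi-definiteness of the form transparent.

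For part (2), the two-sided bound \eqref{eq-kernel-cc} directly gives $G_{2\alpha}(q_1,q_2) \asymp d_{cc}(q_1,q_2)^{-2(n+1-2\alpha)}$, so for non-negative $\varphi,\psi \in \mathcal{H}$ the claimed equivalence follows from the integral representation obtained in part (1) by bounding the integrand term by term, with the exponent $2n+2-4\alpha$ appearing naturally. The main obstacle, in my view, is pinpointing the range of $\alpha$ for which every step is valid: the window $0 < \alpha < \tfrac{n+1}{2}$ is exactly what is needed for $0 < 2\alpha < n+1$, which in turn ensures that the time integral in \eqref{green function} converges at both $0$ and $\infty$ thanks to the Gaussian bounds \eqref{eq-heat-kernel-bds}. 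Some care will also be required to justify the Fubini exchange and to extend the identities above from Schwartz test functions (on which all expressions are classical) to arbitrary elements of $\mathcal{H}$ by a standard density argument.
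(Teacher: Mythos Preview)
Your proposal is correct and follows essentially the same route as the paper: self-adjointness of $(-\Delta)^{-\alpha}$ to pass to $(-\Delta)^{-2\alpha}$, the kernel representation \eqref{green function} (valid since $2\alpha\in(0,n+1)$), and then the two-sided bound \eqref{eq-kernel-cc} for part (2). The additional remarks you make about the Kolmogorov construction, left-invariance, and density arguments are reasonable elaborations, but the core argument matches the paper's proof exactly.
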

\begin{proof}
First it is obvious from \eqref{eq-cov-0<s<1} that $\bE( \bW_{\!\al}(\varphi)\bW_{\!\al}(\psi))$ is well defined for $\varphi, \psi\in \mathcal{W}^{-\alpha,2}$. Next in order to prove our first claim, we start from expression \eqref{eq-cov-0<s<1} and use the fact that $(-\Delta)^{-\alpha}$ is self-adjoint in order to write 
\begin{align}\label{covariance by Laplacian}
\bE\left( \bW_{\!\al}(\varphi)\bW_{\!\al}(\psi)\right)=\int_{\R_+} \int_{\Heis} \varphi (t,q_1)(-\Delta)^{-2\alpha} \psi (t,q_1)d\mu(q_1) dt. 
\end{align}
Since $(-\Delta)^{-2\alpha}$ admits the kernel $G_{2\alpha}$ given by \eqref{green function}, we can recast the above equality as
\begin{equation}\label{eq-d}
\bE\left( \bW_{\!\al}(\varphi)\bW_{\!\al}(\psi)\right)=\int_{\R_+} \int_{\Heissq}G_{2\alpha}(q_1,q_2) \varphi(t,q_1) \psi (t,q_2) d\mu(q_1) d\mu(q_2)dt,
\end{equation}
which proves our first claim. Eventually the second claim is easily shown by combining~\eqref{eq-d} and the estimate \eqref{eq-kernel-cc}.
\end{proof}

\begin{remark}
For $\alpha \in  \left(0,\frac{n+1}2\right]$ an alternative way to construct and study our noise $\bW_{\!\al}$ is to use the theory of tempered distribution  in the Heisenberg  groups developed in \cite{BCD}.  More precisely, it turns out that the space of tempered distributions $\mathcal{S}'(\Heis)$ on the Heisenberg group  is the same as the space of tempered distributions in $\mathbb{R}^{2n+1}$. We can then define fractional powers of the sub-Laplacian on tempered distributions by the duality formula
\[
\left\langle (-\Delta)^{-\alpha}  \Phi , f \right\rangle= \left\langle   \Phi ,(-\Delta)^{-\alpha} f \right\rangle, \quad \Phi \in \mathcal{S}'(\Heis), f \in \mathcal{S}(\Heis).
\]
and it follows from the estimates in  \cite{BCD} that  $(-\Delta)^{-\alpha}: \mathcal{S}'(\Heis) \to \mathcal{S}'(\Heis)$. In particular, one can then define the random tempered distribution
\[
\bW_{\!\al}= (-\Delta)^{-\alpha} \bW \, , 
\]
where $\bW$ is a white noise (realized as a tempered distribution) on $[0,+\infty)\times \Heis$. This remark will be further developed in a subsequent work, where weighted Besov spaces of tempered distributions supporting the noise $\bW_{\!\al}$ will be explicitly constructed. In the present work, we will only rely on the Hilbert space type construction provided in Proposition \ref{prop-field-W}.
 \end{remark}

Next we state some basic properties of the fields $\bW_{\!\al}$, namely invariance by dilation and rotation. To this aim, we label the invariance of our potential $G_\alpha$ by dilation for further use. 
\begin{lemma}
Let $G_\alpha$ the kernel for the operator $(-\Delta)^{-\alpha}$ with $0<\alpha<n+1$. For the dilations $\delta_\lambda$ in \eqref{eq-dil-delta} we have the following scaling property:
\begin{equation}\label{eq-dil-kernels}
 G_\alpha\circ\delta_\lambda=\frac{1}{\lambda^{2(n+1-\alpha)}}G_\alpha.
\end{equation}
\end{lemma}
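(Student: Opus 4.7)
The plan is to reduce the statement to the analogous dilation identity \eqref{eq-kernel-dil} for the heat kernel by a change of variables in the subordination integral \eqref{green function}. Note first that by left-invariance of both $\Delta$ and $d\mu$, the kernel $G_\alpha(q_1,q_2)$ depends only on $q_1^{-1}q_2$, so abusing notation we may write $G_\alpha(q)$ for $q=q_1^{-1}q_2$, and interpret $G_\alpha\circ\delta_\lambda$ as a function of a single variable on $\Heis$; this is the same convention already used in \eqref{eq-kernel-dil}.

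With this convention, I would start from
\[
G_\alpha(\delta_\lambda q) = \frac{1}{\Gamma(\alpha)}\int_0^{+\infty} t^{\alpha-1}\, p_t(\delta_\lambda q)\, dt,
\]
and then invoke the heat kernel scaling \eqref{eq-kernel-dil}, namely $p_t(\delta_\lambda q) = \lambda^{-2(n+1)}\, p_{t/\lambda^2}(q)$, to obtain
\[
G_\alpha(\delta_\lambda q) = \frac{\lambda^{-2(n+1)}}{\Gamma(\alpha)}\int_0^{+\infty} t^{\alpha-1}\, p_{t/\lambda^2}(q)\, dt.
\]
Performing the substitution $u=t/\lambda^2$, so that $dt=\lambda^2\,du$ and $t^{\alpha-1}=\lambda^{2(\alpha-1)}u^{\alpha-1}$, the $\lambda$-powers combine to $\lambda^{-2(n+1)+2\alpha-2+2}=\lambda^{-2(n+1-\alpha)}$, which yields
\[
G_\alpha(\delta_\lambda q) = \frac{1}{\lambda^{2(n+1-\alpha)}}\cdot\frac{1}{\Gamma(\alpha)}\int_0^{+\infty} u^{\alpha-1}\, p_u(q)\, du = \frac{1}{\lambda^{2(n+1-\alpha)}}\, G_\alpha(q),
\]
which is the desired identity \eqref{eq-dil-kernels}.

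There is no real obstacle here; the only minor point worth checking is that the subordination integral converges absolutely under the assumption $0<\alpha<n+1$, so that Fubini/change of variables is legitimate. This is already implicit in the existence statement for $G_\alpha$ preceding the lemma: the small-$t$ singularity $t^{\alpha-1}$ is integrable thanks to $\alpha>0$, while the large-$t$ behavior is controlled by the Gaussian upper bound in \eqref{eq-heat-kernel-bds} combined with the restriction $\alpha<n+1$ (which ensures integrability of $t^{\alpha-n-2}$ against the Gaussian factor, giving precisely the finite answer proportional to $d_{cc}(e,q)^{-2(n+1-\alpha)}$ already recorded in \eqref{eq-kernel-cc}).
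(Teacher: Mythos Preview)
Your proof is correct and follows essentially the same approach as the paper: write $G_\alpha\circ\delta_\lambda$ using the subordination formula \eqref{green function}, apply the heat kernel scaling \eqref{eq-kernel-dil}, and perform the change of variable $u=t/\lambda^2$. The paper's proof is stated more tersely, but the steps are identical; your additional remark on absolute convergence is a welcome clarification but does not change the argument.
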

\begin{proof}
Write $G_\alpha\circ \delta_\lambda$ thanks to relation \eqref{green function}. Then apply the elementary change of variable $t/\lambda^2=u$ therein and invoke \eqref{eq-kernel-dil}. This proves \eqref{eq-dil-kernels} thanks to elementary considerations. 
\end{proof}

We are now ready to state the dilation invariance property for $\bW_\alpha$ when $0<\alpha<\frac{n+1}2$.

\begin{proposition}\label{prop-dilation}
Let $\bW_{\!\al}$  be the field described in Proposition \ref{prop-field-W}, with $0<\al<\frac{n+1}2$.  For any $\lambda>0$, we have that
\begin{equation}\label{eq-dilation}
\lbrace \bW_{\!\al}(\varphi\circ \delta_\lambda), \varphi\in L^2(\R_+; \cw^{-\alpha,2})\rbrace\stackrel{\mathcal{D}}{=}\lbrace\lambda^{-(n+1+2\alpha)} \bW_{\!\al}(\varphi), \varphi\in L^2(\R_+;\cw^{-\alpha,2})\rbrace.
\end{equation}
\end{proposition}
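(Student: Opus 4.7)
The plan is to reduce the distributional identity to an identity of covariances. Both sides of \eqref{eq-dilation} are centered Gaussian families parametrized linearly by $\varphi\in L^2(\R_+;\mathcal{W}^{-\alpha,2})$, so it suffices to show that for every $\varphi,\psi\in L^2(\R_+;\mathcal{W}^{-\alpha,2})$,
\begin{equation*}
\bE\!\left[\bW_{\!\al}(\varphi\circ\delta_\lambda)\,\bW_{\!\al}(\psi\circ\delta_\lambda)\right]
=\lambda^{-2(n+1+2\alpha)}\,\bE\!\left[\bW_{\!\al}(\varphi)\bW_{\!\al}(\psi)\right].
\end{equation*}
Along the way this also shows that $\varphi\circ\delta_\lambda$ still lies in $\mathcal{H}$, so the left-hand side of \eqref{eq-dilation} is well defined.

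First I would use the representation obtained in Proposition \ref{prop-field-W}(1), namely
\begin{equation*}
\bE\!\left[\bW_{\!\al}(\varphi\circ\delta_\lambda)\bW_{\!\al}(\psi\circ\delta_\lambda)\right]
=\int_{\R_+}\!\int_{\Heissq}\varphi(t,\delta_\lambda q_1)\psi(t,\delta_\lambda q_2)\,G_{2\alpha}(q_2^{-1}q_1)\,d\mu(q_1)d\mu(q_2)\,dt.
\end{equation*}
Then, for each $t$, I would change variables $p_i=\delta_\lambda(q_i)$ for $i=1,2$. Since $\mu$ is Lebesgue measure on $\R^{2n+1}$ and the Jacobian of $\delta_\lambda$ (see \eqref{eq-dil-delta}) is $\lambda^{2n+2}$, we have $d\mu(q_i)=\lambda^{-(2n+2)}d\mu(p_i)$, producing a total prefactor $\lambda^{-2(2n+2)}$.

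The second ingredient is the behavior of $G_{2\alpha}$ under dilation. Since $\delta_\lambda$ is a group automorphism of $\Heis$, we have $\delta_{1/\lambda}(p_2)^{-1}\delta_{1/\lambda}(p_1)=\delta_{1/\lambda}(p_2^{-1}p_1)$, and the scaling identity \eqref{eq-dil-kernels} applied with parameter $1/\lambda$ gives
\begin{equation*}
G_{2\alpha}\!\left(\delta_{1/\lambda}(p_2^{-1}p_1)\right)=\lambda^{2(n+1-2\alpha)}G_{2\alpha}(p_2^{-1}p_1).
\end{equation*}
Plugging these two facts in and combining the exponents $2(n+1-2\alpha)-2(2n+2)=-2(n+1+2\alpha)$ yields the required scaling of the covariance, and hence \eqref{eq-dilation}.

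The argument is essentially bookkeeping; the only point requiring a little care is the combined use of the Heisenberg group homomorphism property of $\delta_\lambda$ together with the scaling of $G_{2\alpha}$, which is what forces the specific exponent $n+1+2\alpha$. Rotation and translation invariance could be derived analogously from \eqref{eq-kernel-rot}--\eqref{eq-kernel-transl}, but here the dilation case is the one producing a nontrivial scale factor.
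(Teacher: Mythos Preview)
Your proof is correct and follows essentially the same route as the paper: reduce the distributional identity to an equality of covariances, use the kernel representation of Proposition~\ref{prop-field-W}(1), change variables via $\delta_\lambda$, and combine the Jacobian $\lambda^{-2(2n+2)}$ with the scaling \eqref{eq-dil-kernels} of $G_{2\alpha}$ to produce $\lambda^{-2(n+1+2\alpha)}$. The only cosmetic difference is that you write the kernel in one-variable form $G_{2\alpha}(q_2^{-1}q_1)$ and invoke the automorphism property of $\delta_\lambda$ explicitly, whereas the paper works with the two-variable kernel $G_{2\alpha}(p,q)$; the content is identical.
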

\begin{proof}
Both sides of \eqref{eq-dilation} define a centered Gaussian field. We shall prove that the covariance functions of those two fields coincide. Let us start with the left hand side of \eqref{eq-dilation}, and consider two functions $\varphi, \psi$ in $L^2(\R_+;\cw^{-\alpha,2})$. Owing to \eqref{eq-d} we have
\begin{align*}
&\bE\left( \bW_{\!\al}(\varphi\circ \delta_\lambda)\bW_{\!\al}(\psi \circ \delta_\lambda)\right)=\int_{ \R_+\times\Heissq}\varphi(t,\delta_\lambda p)\psi(t,\delta_\lambda q)G_{2\alpha}(p,q)dt\, d\mu (p)d\mu (q)
\end{align*}
Next we set $p:=\delta_\lambda p, q:=\delta_\lambda q$ in the integral above. This yields
\begin{multline}\label{eq-dilation-b}
\bE\left( \bW_{\!\al}(\varphi\circ \delta_\lambda)\bW_{\!\al}(\psi \circ \delta_\lambda)\right) \\
=\int_{\R_+\times \Heissq}\varphi(t,  p)\psi(t, q)G_{2\alpha}(\delta_\lambda^{-1}p,\delta_\lambda^{-1}q) dt\,d\mu (\delta_\lambda^{-1} p) d\mu (\delta_\lambda^{-1}q) .
\end{multline}
Plugging relation \eqref{eq-dil-kernels} into \eqref{eq-dilation-b}, and resorting to the  property $d\mu(\delta_{\lambda}^{-1}q)=\lambda^{-Q}d\mu(q)$ with $Q=2(n+1)$, we get
\begin{align*}
\bE\left( \bW_{\!\al}(\varphi\circ \delta_\lambda)\bW_{\!\al}(\psi \circ \delta_\lambda)\right)=\lambda^{-2(n+1+2\alpha)}\int_{\R_+\times \Heissq}\varphi(t, p)\psi(t, q)G_{2\alpha}(p,q)dt\, d\mu (p)d\mu (q).
\end{align*}
This is obviously the covariance function for the right hand side of \eqref{eq-dilation}, which ends the proof.
\end{proof}

\begin{remark}
Identity \eqref{eq-dilation} explains why we separate between $\alpha<\frac{n+1}{2}$ and $\alpha>\frac{n+1}{2}$ for a distributional vs function-valued noise $\bW_\alpha$. In order to see this, for a parameter $\lambda>0$, a given $x\in\Heis$ and a smooth function $\varphi$, set 
\begin{align*}
\mathcal{S}_{x,\lambda}\varphi(p)=\frac{1}{\lambda^Q}\varphi(\delta_{\lambda^{-1}}(x^{-1}p)),\quad p\in\Heis,
\end{align*}
where we recall that $Q=2(n+1)$ is the homogeneous (or effective) dimension. Then a way to characterize the fact that a distribution $f$ is in $\mathcal{C}^\beta$ for some $\beta<0$ on $\Heis$ is to have 
\begin{align}\label{char beta-holder}
\int_{\Heis}\mathcal{S}_{x,\lambda}\varphi(p)f(p) \, d\mu(p)\leq c_\varphi \lambda^\beta,
\end{align}
for all test functions $\varphi$. Note that the criterion \eqref{char beta-holder} is also valid for negative values of $\beta$. At a heuristic level, one can implement the criterion \eqref{char beta-holder} to our noise $\bW_\alpha$ by checking that 
\begin{align}
\bE\left[\left(\bW_\alpha(\mathcal{S}_{x,\lambda}\varphi)\right)^2\right]\leq c_\varphi\lambda^{2\beta}.\label{test noise holder}
\end{align}
Now \eqref{eq-dilation} easily yields
\begin{eqnarray*}
\bE\left[\left(\bW_\alpha(\mathcal{S}_{x,\lambda}\varphi)\right)^2\right]
&=&\frac{1}{\lambda^{2Q}}\bE\left[\bW_\alpha\left(\varphi(x^{-1}p)\circ\delta_{\lambda^{-1}}\right)\right]\\
&=&c_\varphi\lambda^{2(n+1+2\alpha)-2Q}=\lambda^{4\alpha-2(n+1)}.
\end{eqnarray*}
We thus get an exponent $\beta=2\alpha-(n+1)$ in \eqref{char beta-holder}. In particular, the separation between $\beta<0$ (distribution-valued noise) and $\beta>0$ (function-valued noise) occurs at $\alpha=\frac{n+1}{2}$. This explains our restriction on $\alpha$ for this section as well as Section \ref{function val alpha}.
\end{remark}

Next we consider invariances with respect to the horizontal rotations $R_\theta$ defined by \eqref{eq-rotation}. Our result is summarized in the proposition below.
\begin{proposition}\label{prop-rotation}
Let $\alpha\in\left(0,\frac{n+1}2\right)$ and let $\bW_\alpha$ be the same random field as in Proposition~\ref{prop-dilation}. For $\theta\in[0,2\pi)$, consider the rotation $R_\theta$ defined by \eqref{eq-rotation}. Then the following identity in distribution holds true
\begin{equation}\label{eq-rotation-d}
\lbrace \bW_{\!\al}(\varphi\circ R_\theta); \varphi\in L^2(\R_+; \cw^{-\alpha,2})\rbrace\stackrel{\mathcal{D}}{=}\lbrace \bW_{\!\al}(\varphi); \varphi\in L^2(\R_+; \cw^{-\alpha,2})\rbrace.
\end{equation}
\end{proposition}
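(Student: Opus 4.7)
The strategy is to mirror the dilation argument of Proposition~3.2. Since both sides of~\eqref{eq-rotation-d} define centered Gaussian families indexed by the same Hilbert space $L^2(\R_+;\cw^{-\alpha,2})$, the identity in law reduces to showing that their covariance bilinear forms coincide. In view of the representation~\eqref{eq-d} from Proposition~\ref{prop-field-W}, this amounts to proving that
\begin{equation*}
\int_{\R_+\times\Heissq} (\varphi\circ R_\theta)(t,q_1)\,(\psi\circ R_\theta)(t,q_2)\,G_{2\alpha}(q_1,q_2)\,dt\,d\mu(q_1)\,d\mu(q_2)
\end{equation*}
equals the same integral with $\varphi,\psi$ in place of $\varphi\circ R_\theta,\psi\circ R_\theta$, for all $\varphi,\psi\in L^2(\R_+;\cw^{-\alpha,2})$.

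The first key step is to establish the rotation invariance of the Green kernel, namely
\begin{equation*}
G_{2\alpha}(R_\theta p, R_\theta q)=G_{2\alpha}(p,q), \quad p,q\in\Heis.
\end{equation*}
I would derive this from the integral representation~\eqref{green function} together with the heat kernel invariance~\eqref{eq-kernel-rot}. To propagate the latter from a one-variable to a two-variable statement, the crucial observation is that $R_\theta$ is a \emph{group automorphism} of $\Heis$, so that $(R_\theta p)^{-1}\star (R_\theta q)=R_\theta(p^{-1}\star q)$, and hence $p_t(R_\theta p, R_\theta q)=p_t(R_\theta(p^{-1}q))=p_t(p^{-1}q)=p_t(p,q)$ by~\eqref{eq-kernel-rot}. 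Showing that $R_\theta$ is a group homomorphism reduces to verifying that the symplectic form $\omega$ on $\R^{2n}$ is preserved by the standard rotation $(x,y)\mapsto(\cos\theta\, x+\sin\theta\, y,\,-\sin\theta\, x+\cos\theta\, y)$, which is a direct (if slightly tedious) computation using the trigonometric identity $\cos^2\theta+\sin^2\theta=1$.

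The second step is a straightforward change of variables $p=R_\theta q_1$, $q=R_\theta q_2$ in the covariance integral. Here I would use the fact that $R_\theta$ is an isometry of $\R^{2n+1}$ in the Euclidean sense (orthogonal on the first $2n$ coordinates, identity on the $z$-coordinate), so its Jacobian determinant is $1$ and the Haar measure $\mu$ is preserved: $d\mu(R_\theta^{-1}p)=d\mu(p)$. Combining this with the kernel invariance from the previous step yields the required equality of covariances. The main potential pitfall is precisely the automorphism check for $R_\theta$; once this algebraic fact is secured, everything else follows by routine manipulations analogous to those used in the proof of Proposition~\ref{prop-dilation}.
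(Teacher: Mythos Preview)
Your proposal is correct and follows essentially the same route as the paper: invariance of the heat kernel $p_t$ under $R_\theta$ (eq.~\eqref{eq-kernel-rot}) yields invariance of $G_{2\alpha}$ via~\eqref{green function}, and then the covariance argument from Proposition~\ref{prop-dilation} carries over verbatim. The paper's proof is extremely terse (three sentences), whereas you make explicit the two ingredients it leaves implicit---that $R_\theta$ is a group automorphism of $\Heis$ (needed to pass from the one-point invariance~\eqref{eq-kernel-rot} to the two-point kernel) and that $R_\theta$ preserves the Haar measure---so your write-up is in fact more complete.
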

\begin{proof}
The heat kernel $p_t$ is invariant by rotation, as assessed by \eqref{eq-kernel-rot}. This immediately yields the invariance of $G_\alpha$ by rotation, similarly to what we did for \eqref{eq-dil-kernels}. Eventually the invariance of $\bW_\alpha$ is obtained along the same lines as for Proposition \ref{prop-dilation}.
\end{proof}

We finish this section with invariance for the left translations $L_x$ introduced in Section~\ref{sec:heisenberg-group}.   The homogeneity property is given as below.
\begin{proposition}\label{prop-translation}
For $\alpha\in(0,(n+1)/2)$, let $\bW_\alpha$  be the field as described in Proposition~\ref{prop-dilation}.  For any $x\in \Heis$, we have that
\begin{equation}\label{eq-translation}
\lbrace \bW_{\!\al}(\varphi\circ L_x), \varphi\in L^2(\R_+; \cw^{-\alpha,2})\rbrace\stackrel{\mathcal{D}}{=}\lbrace \bW_{\!\al}(\varphi), \varphi\in L^2(\R_+;\cw^{-\alpha,2})\rbrace \, .
\end{equation}
\end{proposition}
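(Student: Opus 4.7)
The plan is to mimic the structure of Propositions \ref{prop-dilation} and \ref{prop-rotation}. Since both sides of \eqref{eq-translation} are centered Gaussian fields indexed by the same Hilbert space $\mathcal{H}=L^2(\R_+;\mathcal{W}^{-\alpha,2})$, it suffices to verify that their covariance functionals coincide for an arbitrary pair $\varphi,\psi\in\mathcal{H}$.

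First I would record a left-invariance property for the Green kernel $G_{2\alpha}$ analogous to \eqref{eq-dil-kernels}: from the left-invariance of the heat kernel \eqref{eq-kernel-transl} together with the representation \eqref{green function}, one obtains
\begin{equation*}
G_{2\alpha}(L_x p, L_x q) = G_{2\alpha}(p,q), \qquad p,q\in\Heis, \ x\in\Heis.
\end{equation*}
This is a short calculation: $p_t(L_x p, L_x q) = p_t((L_x p)^{-1}(L_x q)) = p_t(p^{-1}q) = p_t(p,q)$, which transfers to $G_{2\alpha}$ through the time integral defining it.

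Next I would apply formula \eqref{eq-d} to the left-hand side of \eqref{eq-translation} and perform the change of variables $p\mapsto L_x p$ and $q\mapsto L_x q$ in the double spatial integral:
\begin{align*}
\bE\!\left[\bW_{\!\al}(\varphi\circ L_x)\,\bW_{\!\al}(\psi\circ L_x)\right]
&=\int_{\R_+}\!\int_{\Heissq}\varphi(t,L_x p)\,\psi(t,L_x q)\,G_{2\alpha}(p,q)\,d\mu(p)\,d\mu(q)\,dt\\
&=\int_{\R_+}\!\int_{\Heissq}\varphi(t,p)\,\psi(t,q)\,G_{2\alpha}(L_x^{-1}p,L_x^{-1}q)\,d\mu(L_x^{-1}p)\,d\mu(L_x^{-1}q)\,dt.
\end{align*}
Here I would invoke two invariances simultaneously: the Haar measure $\mu$ on $\Heis$ is left-invariant, so $d\mu(L_x^{-1}p)=d\mu(p)$, and the kernel identity above yields $G_{2\alpha}(L_x^{-1}p,L_x^{-1}q)=G_{2\alpha}(p,q)$. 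Substituting both gives exactly $\bE[\bW_{\!\al}(\varphi)\bW_{\!\al}(\psi)]$, completing the equality of covariances and hence \eqref{eq-translation}.

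There is no real obstacle here: the only mildly delicate point is justifying the left-invariance of $G_{2\alpha}$ rigorously (the argument is one line from \eqref{green function}) and keeping track of the fact that $\mu$ is bi-invariant under $L_x$ because it coincides with Lebesgue measure on $\R^{2n+1}$ and $L_x$ acts as an affine transformation with unit Jacobian. Once these two facts are in place, the Gaussianity reduces the proposition to a routine change of variables.
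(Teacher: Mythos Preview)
Your proposal is correct and follows essentially the same approach as the paper: reduce to equality of covariances via Gaussianity, apply formula \eqref{eq-d}, change variables $p\mapsto L_x^{-1}p$, $q\mapsto L_x^{-1}q$, and use left-invariance of both the Haar measure and the kernel $G_{2\alpha}$ (the latter deduced from \eqref{eq-kernel-transl} and \eqref{green function}). The paper's own proof is organized in the same way, so there is nothing to add.
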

\begin{proof}
Similarly to what we did for Proposition~\ref{prop-dilation}, in order
to prove \eqref{eq-translation} we just need to show that the covariance functions of the two fields agree. Consider the covariance of the noise on the left hand side. For any $\varphi, \psi$ in $L^2(\R_+;\cw^{-\alpha,2})$ and $x\in \Heis$, by \eqref{eq-d} we have 
\begin{align*}
&\bE\left( \bW_{\!\al}(\varphi\circ L_x)\bW_{\!\al}(\psi \circ L_x)\right)=\int_{ \R_+\times\Heissq}\varphi(t, xp)\psi(t,xq)G_{2\alpha}(p,q)dt\, d\mu (p)d\mu (q)
\end{align*}
Due to the left translation invariance of the Haar measure $\mu$, we obtain that
\begin{align}\label{eq-trans-a}
&\bE\left( \bW_{\!\al}(\varphi\circ L_x)\bW_{\!\al}(\psi \circ L_x)\right)=\int_{ \R_+\times\Heissq}\varphi(t, p)\psi(t,q)G_{2\alpha}(x^{-1}p,x^{-1}q)dt\, d\mu (p)d\mu (q)
\end{align}
Then by the translation invariance of the green function, namely
\begin{equation}\label{eq-G-trans}
G_{2\alpha}(x^{-1}p,x^{-1}q)=G_{2\alpha}(p,q),
\end{equation}
we obtain the conclusion. Notice that \eqref{eq-G-trans} is an easy consequence of \eqref{eq-left-transl} and \eqref{green function}.
\end{proof}

\subsection{Negative powers of the sub-Laplacian with  $\mathbf{n/2+1/2<\alpha<n/2+1}$}\label{sec:s-larger-than-1}\label{function val alpha}

In the regime $0<\alpha<\frac{n+1}2$, the noise $\dot{\bW}$ was defined as a distribution. We will now describe a range of parameters for which the fractional Gaussian field can be defined pointwisely as the analogue of a fractional Brownian motion.

We start by giving an integrability result for the kernel $G_{\al}$, which will be crucial in order to define our Gaussian field.
\begin{proposition}\label{prop-integrability-G}
Let $\frac{n+1}2 < \alpha < \frac{n}2+1$ and consider the kernel $G_\alpha$ defined by \eqref{green function}. Then there exists a constant $C>0$ such that for every $x,y \in \Heis$
\begin{equation}\label{a4}
\int_{\Heis} (G_\alpha (x,q)-G_\alpha(y,q))^2  d\mu(q) \le Cd_{cc}(x,y)^{4\alpha-2(n+1)} ,
\end{equation}
where we recall that the distance $d_{cc}$ is given by \eqref{eq-cc-dist}.
\end{proposition}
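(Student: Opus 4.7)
The plan is to use the projective Fourier transform developed in Section~\ref{sec-fpower-lap}, combined with an interpolation estimate for the Schr\"odinger representation $U^\lambda$, to reduce the bound to a scalar calculation whose integrability range matches exactly the hypothesis $\tfrac{n+1}{2}<\alpha<\tfrac{n}{2}+1$.

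First I would identify the projective Fourier transform of $q\mapsto G_\alpha(x,q)$. Since $G_\alpha(x,q)=G_\alpha(e,x^{-1}q)$ by construction, the transformation rule $\mathcal{F}(f\circ L_{x^{-1}})(\lambda)=U_x^\lambda\circ\mathcal{F}(f)(\lambda)$ combined with \eqref{HeatKernelFourier} and \eqref{Fourier transform Laplace} yields
\[
\mathcal{F}\bigl(G_\alpha(x,\cdot)\bigr)(\lambda)=U_x^\lambda\circ D^\lambda,\qquad D^\lambda\Phi_m^\lambda=\bigl(4|\lambda|(2|m|+n)\bigr)^{-\alpha}\Phi_m^\lambda .
\]
Plugging into Plancherel \eqref{proj Plancherel}, summing over the second Hermite index $\ell$ via Parseval in $L^2(\R^n)$, and using unitarity together with the representation identity $U_y^\lambda=U_x^\lambda U_{x^{-1}y}^\lambda$ to factor out $U_x^\lambda$, one reduces the left-hand side of \eqref{a4} (up to a harmless constant) to
\[
\int_{\R^*}\sum_{m\in\N^n}\frac{\bigl\|(I-U_z^\lambda)\Phi_m^\lambda\bigr\|_{L^2(\R^n)}^{2}}{\bigl(4|\lambda|(2|m|+n)\bigr)^{2\alpha}}\,|\lambda|^n\,d\lambda ,
\]
with $z:=x^{-1}y$, so that $d_{cc}(e,z)=d_{cc}(x,y)=:r$ by the left invariance of $d_{cc}$.

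The heart of the argument is then a two-sided bound on $\|(I-U_z^\lambda)\Phi_m^\lambda\|^2$. The unitarity of $U_z^\lambda$ trivially gives $4$; to extract a useful small-$r$ improvement, I would pick a horizontal path $\gamma:[0,1]\to\Heis$ from $e$ to $z$ with $\int_0^1|\dot\gamma(t)|_{\ch}^{2}\,dt$ arbitrarily close to $r^{2}$, and write
\[
(I-U_z^\lambda)\Phi_m^\lambda=-\int_0^1 U_{\gamma(t)}^\lambda\,dU^\lambda(\dot\gamma(t))\Phi_m^\lambda\,dt .
\]
Combining Cauchy--Schwarz with the a priori estimate $\|dU^\lambda(V)\Phi_m^\lambda\|^{2}\le |V|_{\ch}^{2}\cdot 4|\lambda|(2|m|+n)$ valid for any horizontal vector $V$, which follows by integration by parts from the eigenvalue identity $-dU^\lambda(\Delta)\Phi_m^\lambda=4|\lambda|(2|m|+n)\Phi_m^\lambda$ encoded in \eqref{eq:Laplace-on-Fourier}, and taking the infimum over $\gamma$, yields $\|(I-U_z^\lambda)\Phi_m^\lambda\|^{2}\le 4|\lambda|(2|m|+n)\,r^{2}$. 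Together with the trivial bound this gives the interpolated estimate $\|(I-U_z^\lambda)\Phi_m^\lambda\|^{2}\le 4\min\bigl(1,|\lambda|(2|m|+n)\,r^{2}\bigr)$.

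The final step is a change of variables. Setting $\mu:=2|m|+n$ and $u:=4|\lambda|\mu r^{2}$, the $\lambda$-integral at fixed $m$ factors as $C\,r^{4\alpha-2(n+1)}\mu^{-(n+1)}\int_0^\infty\min(1,u)\,u^{n-2\alpha}\,du$. The integral in $u$ converges at $u\to\infty$ exactly when $\alpha>\tfrac{n+1}{2}$ and at $u\to 0$ exactly when $\alpha<\tfrac{n}{2}+1$, which is why the statement requires precisely this range of parameters. The remaining sum $\sum_{m\in\N^n}\mu^{-(n+1)}\lesssim\sum_{k\ge 0}(k+1)^{n-1}/(2k+n)^{n+1}$ is finite, and collecting constants produces \eqref{a4}. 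The main technical obstacle is a rigorous justification of the Plancherel step, since $G_\alpha(x,\cdot)$ and $G_\alpha(y,\cdot)$ are not individually in $L^{2}(\Heis)$ due to the on-diagonal singularity of $G_{2\alpha}$; this is handled by replacing $G_\alpha$ by its truncation $G_\alpha^{(\varepsilon)}(x,q):=\Gamma(\alpha)^{-1}\int_\varepsilon^{1/\varepsilon}t^{\alpha-1}p_t(x^{-1}q)\,dt$, running the computation for the smooth $L^{2}$ difference $G_\alpha^{(\varepsilon)}(x,\cdot)-G_\alpha^{(\varepsilon)}(y,\cdot)$, and passing to the limit $\varepsilon\to 0$ by monotone convergence of the nonnegative Fourier-side integrand.
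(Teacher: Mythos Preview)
Your argument is correct and takes a genuinely different route from the paper. The paper works on the spatial side by duality: for $f\in L^2(\Heis)$ it writes $(-\Delta)^{-\alpha}f(x)-(-\Delta)^{-\alpha}f(y)=\Gamma(\alpha)^{-1}\int_0^\infty t^{\alpha-1}(P_tf(x)-P_tf(y))\,dt$, splits the $t$-integral at $\delta=d_{cc}(x,y)^2$, bounds the small-$t$ piece via the ultracontractivity estimate $|P_tf|\le Ct^{-(n+1)/2}\|f\|_2$ (Lemma~\ref{contPt1}) and the large-$t$ piece via the Lipschitz bound $|P_tf(x)-P_tf(y)|\le C\,d_{cc}(x,y)\,t^{-(n/2+1)}\|f\|_2$ coming from the reverse Poincar\'e inequality (Lemma~\ref{contPt2}), and then recovers \eqref{a4} from the operator norm of the resulting bounded linear functional on $L^2$. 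Your proof instead passes immediately to the Fourier side via Plancherel~\eqref{proj Plancherel}, reducing everything to the scalar estimate $\|(I-U_z^\lambda)\Phi_m^\lambda\|^2\le 4\min\bigl(1,|\lambda|(2|m|+n)\,d_{cc}(e,z)^2\bigr)$, which you extract from the infinitesimal representation and the eigenvalue relation encoded in \eqref{eq:Laplace-on-Fourier}. The paper's approach is more robust---it uses only heat-kernel machinery that transfers verbatim to any space with Gaussian bounds and a reverse Poincar\'e inequality---whereas your approach is tied to the explicit Schr\"odinger representation but is more transparent about \emph{why} the range $\tfrac{n+1}{2}<\alpha<\tfrac{n}{2}+1$ appears: it is precisely the integrability window of $\int_0^\infty\min(1,u)\,u^{n-2\alpha}\,du$. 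Your truncation-plus-Fatou argument for the Plancherel step is sound; note that Fatou already suffices for the inequality \eqref{a4}, so you do not actually need monotone convergence to reach the stated conclusion.
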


Before proving Proposition \ref{prop-integrability-G}, let us state and prove two technical lemmas. The first one quantifies the ultracontractivity of the heat semigroup $P_{t}$.
\begin{lemma}
	\label{contPt1}
Let $P_t$ be the heat semigroup on $\Heis$, whose kernel is the function $p_t$ as given by \eqref{eq-Heis-kernel}. There exists a constant $C>0$ such that for every $f \in L^2(\Heis,\mu)$, $t  >0$ and $g \in \Heis$,
\begin{equation}\label{eq-pt-bd}
| P_t f(g) | \le  \frac{C}{t^{\frac{n+1}2}} \| f \|_{L^2(\Heis,\mu)}.
\end{equation}
\end{lemma}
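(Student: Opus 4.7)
The plan is to combine the Gaussian upper bound on the heat kernel from \eqref{eq-heat-kernel-bds} with the Cauchy--Schwarz inequality and the semigroup property. Since $P_t$ is left-invariant, we can write
\[
P_t f(g) = \int_{\Heis} p_t(g^{-1} q) \, f(q) \, d\mu(q),
\]
and then Cauchy--Schwarz yields
\[
|P_t f(g)|^2 \le \| f \|_{L^2(\Heis,\mu)}^2 \int_{\Heis} p_t(g^{-1}q)^2 \, d\mu(q).
\]
By left invariance of the Haar measure, the remaining integral equals $\int_{\Heis} p_t(q)^2 \, d\mu(q)$, so it suffices to establish the on-diagonal bound
\[
\int_{\Heis} p_t(q)^2 \, d\mu(q) \le \frac{C}{t^{n+1}}.
\]

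For this I would use two equivalent strategies. The cleanest one is to invoke the symmetry $p_t(q) = p_t(q^{-1})$ (which follows from the self-adjointness of $\Delta$ in $L^2(\Heis,\mu)$ together with left invariance) and the semigroup identity, giving
\[
\int_{\Heis} p_t(q)^2 \, d\mu(q) = \int_{\Heis} p_t(e,q)\, p_t(q,e) \, d\mu(q) = p_{2t}(e).
\]
The upper heat kernel estimate \eqref{eq-heat-kernel-bds} applied at the identity then gives $p_{2t}(e) \le c_3/(2t)^{n+1}$, which is the desired bound. Alternatively, one can integrate the Gaussian upper bound \eqref{eq-heat-kernel-bds} directly, using the volume growth of sub-Riemannian balls $\mu(B_{cc}(e,r)) \asymp r^{2(n+1)}$ to evaluate $\int \exp(-2c_4 \, d_{cc}(e,q)^2/t)\, d\mu(q)$ via the change of variable $r = d_{cc}(e,q)/\sqrt{t}$; this also produces the factor $t^{n+1}$ that cancels one of the two factors of $t^{-(n+1)}$ coming from squaring the prefactor in \eqref{eq-heat-kernel-bds}.

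Combining the two displays gives $|P_t f(g)| \le C \, t^{-(n+1)/2} \|f\|_{L^2(\Heis,\mu)}$, which is \eqref{eq-pt-bd}. No serious obstacle is expected: the only point that needs care is justifying the symmetry $p_t(q) = p_t(q^{-1})$, for which one relies on the self-adjointness of $\Delta$ established in Section~\ref{sec-fpower-lap}. Note that the exponent $(n+1)/2$ is exactly half the homogeneous dimension $Q/2 = n+1$, consistent with the standard ultracontractivity picture in stratified groups.
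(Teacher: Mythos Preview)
Your proof is correct and follows essentially the same route as the paper: Cauchy--Schwarz on $P_tf(g)$, the Chapman--Kolmogorov identity $\int p_t^2 = p_{2t}(e)$, and the on-diagonal upper bound from \eqref{eq-heat-kernel-bds}. The only cosmetic difference is that the paper works with the two-point kernel $p_t(q,g)$ and writes $\int p_t(q,g)^2\,d\mu(q)=p_{2t}(g,g)$ before invoking left invariance, whereas you shift to the origin first; the content is identical.
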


\begin{proof}
Starting from the expression 
$
P_tf(g)=\int_\Heis p_t(g,q)f(q)d\mu(q),
$
we invoke Cauchy-Schwarz inequality and Chapman-Kolmogorov identity in order to get
\begin{align*}
| P_t f(g) | &\le  \|f\|_{L^2(\Heis,\mu)} \left( \int_\Heis p^2_{t}(q,g)d\mu(q)\right)^{1/2}\\
&= \|f\|_{L^2(\Heis,\mu)} \left( p_{2t}(g,g)\right)^{1/2}.
\end{align*}
Hence owing to \eqref{eq-heat-kernel-bds} and due to the left translation invariance property of the heat kernel $p_t$ we have that
\[
| P_t f(g) | \le   \frac{\sqrt{c_3}}{t^{\frac{n+1}2}} \|f\|_{L^2(\Heis,\mu)} ,
\]
where $c_3$ is the constant in \eqref{eq-heat-kernel-bds}. This completes the proof of~\eqref{eq-pt-bd}.
\end{proof}

Our second lemma gives more information about the regularity of $P_{t}f$ for a square integrable function.
\begin{lemma}
	\label{contPt2}
Let the notation of Lemma \ref{contPt1} prevail. There exists a constant $C>0$ such that for every  $f \in L^2(\Heis,\mu)$, $t > 0$ and  $x,y \in \Heis$,
\begin{equation}\label{eq-pt-cont}
| P_t f(x) -P_tf(y) | \le C \frac{d_{cc}(x,y)}{t^{\frac{n}2+1}} \| f \|_{L^2(\Heis,\mu)}.
\end{equation}
\end{lemma}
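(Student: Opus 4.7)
The plan is to combine a gradient bound for the heat semigroup with the ultracontractivity estimate from Lemma \ref{contPt1}, and then integrate the horizontal gradient along a sub-Riemannian geodesic connecting $x$ and $y$.

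First I would recall that on $\Heis$ one has a so-called reverse Poincar\'e (or Bakry--\'Emery type) inequality for the heat semigroup $P_t$, which in particular yields a sup-norm gradient estimate of the form
\begin{equation}\label{eq-grad-bd}
\|\nabla_{\ch} P_s g\|_{\infty} \le \frac{C}{\sqrt{s}} \, \|g\|_{\infty},
\end{equation}
for every bounded measurable $g$ and $s>0$, where $\nabla_{\ch}=(X_1,\ldots,X_n,Y_1,\ldots,Y_n)$ is the horizontal gradient associated with the frame \eqref{eq-basis}. This is a classical consequence of the stochastic representation of $P_s$ via the Brownian motion \eqref{eq-BM} together with standard Malliavin/Bismut integration by parts on $\Heis$, and is one of the advanced Heisenberg-analytic tools mentioned in the introduction.

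Next I would exploit the semigroup property $P_t = P_{t/2} \circ P_{t/2}$. Setting $g := P_{t/2} f$, Lemma \ref{contPt1} gives
\[
\|g\|_{\infty} = \|P_{t/2} f\|_{\infty} \le \frac{C_1}{t^{(n+1)/2}} \, \|f\|_{L^2(\Heis,\mu)},
\]
and plugging this into \eqref{eq-grad-bd} applied to $g$ with $s=t/2$ yields
\[
\|\nabla_{\ch} P_t f\|_{\infty}
= \|\nabla_{\ch} P_{t/2} g\|_{\infty}
\le \frac{C_2}{\sqrt{t}} \, \|g\|_{\infty}
\le \frac{C_3}{t^{n/2+1}} \, \|f\|_{L^2(\Heis,\mu)} .
\]

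To conclude, I would pick, for any $\varepsilon>0$, a horizontal path $\gamma : [0,1] \to \Heis$ with $\gamma(0)=x$, $\gamma(1)=y$ and horizontal length at most $d_{cc}(x,y)+\varepsilon$, which exists by definition \eqref{eq-cc-dist}. Since $\dot\gamma(u) \in \ch_{\gamma(u)}$, the fundamental theorem of calculus gives
\[
P_t f(y) - P_t f(x)
= \int_0^1 \langle \nabla_{\ch} P_t f(\gamma(u)), \dot\gamma(u) \rangle_{\ch} \, du ,
\]
which, combined with the previous bound, produces
\[
|P_t f(x) - P_t f(y)|
\le \|\nabla_{\ch} P_t f\|_{\infty} \int_0^1 |\dot\gamma(u)|_{\ch} \, du
\le \frac{C_3}{t^{n/2+1}} \, \|f\|_{L^2(\Heis,\mu)} (d_{cc}(x,y)+\varepsilon) .
\]
Letting $\varepsilon \downarrow 0$ delivers \eqref{eq-pt-cont}. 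The main conceptual step is the gradient estimate \eqref{eq-grad-bd}; once this Heisenberg-specific input is granted, the rest is semigroup factorization plus integration along a CC-geodesic.
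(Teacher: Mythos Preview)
Your proof is correct and follows essentially the same approach as the paper: both combine a reverse Poincar\'e inequality for $P_t$ (giving a $t^{-1/2}$ smoothing from $L^\infty$ to Lipschitz) with the ultracontractivity of Lemma~\ref{contPt1} via the semigroup factorization $P_t=P_{t/2}P_{t/2}$. The only cosmetic difference is that the paper quotes the reverse Poincar\'e inequality from~\cite{BB} directly in its increment form $|P_s g(x)-P_s g(y)|\le C s^{-1/2}d_{cc}(x,y)\|g\|_\infty$, whereas you state it as the pointwise gradient bound and then integrate along a horizontal curve to recover the increment; these are of course equivalent.
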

\begin{proof}
From the reverse Poincar\'e inequality in \cite{BB}, it is known that   there exists a constant $C>0$ such that for every $g \in L^\infty(\Heis,\mu)$, $t > 0$ and $x,y \in \Heis$,
\begin{align}\label{p_tg(x)-p_tg(y)}
| P_t g(x) -P_tg(y) | \le C \frac{d_{cc}(x,y)}{t^{\frac{1}{2}}} \| g \|_{L^\infty(\Heis,\mu)}.
\end{align}
We now invoke the fact that $P_{2t}f=P_tg$ with $g=P_tf$. then we apply successively inequality~\eqref{p_tg(x)-p_tg(y)} and Lemma \ref{contPt1}. This yields
\begin{equation*}
| P_{2t} f(x) -P_{2t}f(y) |
 \le C\frac{d_{cc}(x,y)}{t^{\frac{1}{2}}}\|P_tf\|_{L^\infty(\Heis,\mu)} 
\leq C \frac{d_{cc}(x,y)}{t^{\frac{n}2+1}} \| f \|_{L^2(\Heis,\mu)},
\end{equation*}
which concludes the proof.
\end{proof}

We now focus our attention to the proof of Proposition \ref{prop-integrability-G}, for which some of the arguments are inspired by \cite{BL}.
\begin{proof}[Proof of Proposition \ref{prop-integrability-G}]
Consider $x,y\in \Heis$ and $\alpha>0$. Our first aim is to upper bound the quantity
\begin{equation}\label{eq-Q_s}
Q_\alpha f(x,y):=(-\Delta)^{-\alpha}f(x)-(-\Delta)^{-\alpha}f(y),
\end{equation}
for a generic function $f\in L^2(\Heis)$. Note that according to \eqref{green function} we have
\[
Q_\alpha f(x,y)=\frac1{\Gamma(\alpha)}\int_0^\infty t^{\alpha-1}(P_tf(x)-P_tf(y))dt.
\]
We now introduce a parameter $\delta>0$ to be fixed later on, and decompose $Q_\alpha f$ as
\begin{eqnarray}\label{eq-Q-decomp}
Q_\alpha f(x,y)
&=&
\frac1{\Gamma(\alpha)}\int_0^\delta t^{\alpha-1}(P_tf(x)-P_tf(y))dt
+\frac1{\Gamma(\alpha)}\int_\delta^\infty t^{\alpha-1}(P_tf(x)-P_tf(y))dt \notag\\
&\equiv&
Q^\delta_\alpha f(x,y)+\bar{Q}^\delta_\alpha f(x,y) .
\end{eqnarray}
We then estimate the right hand side of \eqref{eq-Q-decomp} term by term. For $Q^\delta_\alpha f(x,y)$ we resort to \eqref{eq-pt-bd} in order to get
\begin{equation*}
|Q^\delta_\alpha f(x,y)|
\le C\int_0^\delta t^{\alpha-1}\left(|P_tf(x)|+|P_tf(y)|\right)dt
\le C\| f \|_{L^2(\Heis,\mu)}\int_0^\delta t^{\alpha-1-\frac{n+1}{2}}dt.
\end{equation*}
Since we have chosen $\alpha>\frac{n+1}{2}$, we obtain
\begin{equation}\label{eq-Q-delta-ub}
|Q^\delta_\alpha f(x,y)|\le C\delta^{\alpha-\frac{n+1}{2}}\| f \|_{L^2(\Heis,\mu)}.
\end{equation}
As far as $\bar{Q}^\delta_\alpha f(x,y)$ in \eqref{eq-Q-decomp} is concerned, we use relation \eqref{eq-pt-cont}. This yields
\begin{align*}
|\bar{Q}^\delta_\alpha f(x,y)|&\le C \int_\delta^\infty t^{\alpha-1}|P_tf(x)-P_tf(y)|dt\\
&\le C\| f \|_{L^2(\Heis,\mu)}\left( \int_\delta^\infty t^{\alpha-2-n/2}{d_{cc}(x,y)}dt \right).
\end{align*}
Due to the fact that $\alpha<1+\frac{n}2$ we thus get
\begin{equation}\label{eq-Q-bar-ub}
|\bar{Q}^\delta_\alpha f(x,y)|\le C\delta^{\alpha-1-n/2}{d_{cc}(x,y)} \| f \|_{L^2(\Heis,\mu)}.
\end{equation}
In order to make \eqref{eq-Q-delta-ub} and \eqref{eq-Q-bar-ub} comparable, we choose $\delta$ such that $\delta^{\alpha-1-n/2}d_{cc}(x,y)=\delta^{\alpha-\frac{n+1}{2}}$, which easily gives  $\delta=d_{cc}(x,y)^{2}$. With this choice of $\delta$ and plugging \eqref{eq-Q-delta-ub}, \eqref{eq-Q-bar-ub} into \eqref{eq-Q-decomp} we obtain that 
\begin{equation}\label{a5}
\bigg|\int_\Heis \left(G_\alpha(x,q)-G_\alpha(y,q)\right) f(q) \, d\mu(q)\bigg|
\le
Cd_{cc}(x,y)^{2\alpha-(n+1)} \| f \|_{L^2(\Heis,\mu)}.
\end{equation}

Having proved \eqref{a5}, one can establish \eqref{a4} in the following way:
Let $K_{\alpha,x,y}$ be the linear form on $L^2(\Heis,\mu)$ corresponding to the integral kernel $G_\alpha(x,\cdot)-G_\alpha(y,\cdot)$. Clearly from the  above inequality we know that for each fixed $x$ and $y$, $K_{\alpha,x,y}: L^2(\Heis,\mu) \to \R$ is a continuous form. Moreover, denoting by $\|\cdot\|_{\mathrm{op}}$ the operator norm on $L^2(\Heis,\mu)$ and resorting to~\eqref{a5}, it holds that 
\begin{equation}\label{eq-K-norm}
\|G_\alpha(x,\cdot)-G_\alpha(y,\cdot)\|_{L^2(\Heis,\mu)}=\|K_{\alpha,x,y}\|_{\mathrm{op}}
\le Cd_{cc}(x,y)^{2\alpha-(n+1)}.
\end{equation}
The conclusion then follows.
\end{proof}

With Proposition \ref{prop-integrability-G} in hand, we can now turn to a description of our fractional noise. For $\alpha>n/2+1/2$, it will be based on stochastic integration with respect to a white noise measure. Let then  $\mathcal{K}$ denote the $\sigma$-field on $\Heis$ that is associated to the topology induced from the Carnot-Carath\'eodory distance \eqref{eq-cc-dist}, and call $\cb$ the Borel $\sigma$-field on $\R_+$. We consider a real-valued centered Gaussian random measure $W_\Heis: \cb\otimes\mathcal{K}\to L^2(\Omega, \mathcal{G}, \mathbb{P})$  with intensity $\la\otimes\mu$ on $\Heis$, i.e. $W_\Heis$ is a white noise such that
\begin{itemize}
\item $W_\Heis$ is a measure on $(\R_+\times\Heis, \cb\otimes\mathcal{K})$ almost surely
\item For any set $D\in\mathcal{K}$ of finite measure and $s<t$, $W_\Heis([s,t]\times D)$ is a real-valued centered Gaussian variable with variance given by $\mathbf{E}\left(W_\Heis([s,t]\times D)^2 \right)=(t-s)\mu(D)$.
\item For any collection of pairwise disjoint measurable sets $\{[s_j, t_j]\times D_j\}_{j\in \N}\in (\mathcal{B}\times \mathcal{K})^\N$ where $\mathcal{B}$ is the Borel sets, the random variables $W_\Heis([s_j,t_j]\times D_j)$, $j\in \N$ are independent.
\end{itemize}
Notice that the Gaussian measure $W_\Heis$ gives rise to an isonormal Gaussian family $\{ W_\Heis(f), f\in L^2(\R_+\times \Heis; dt\otimes \mu)\}$ with covariance function 
\begin{equation}\label{eq-cov-white}
\bE\left(W_\Heis(f) W_\Heis(g) \right)=\int_{\R_+\times\Heis}f(t,x)g(t,x)\mu(dx)dt.
\end{equation}
Otherwise stated, the covariance function of $W_\Heis$ is given by \eqref{eq-cov-0<s<1} with $\alpha=0$. Therefore $W_\Heis$ also possesses the following properties.
\begin{itemize}
\item Invariance by dilations (as in Proposition \ref{prop-dilation}). It holds that
\begin{equation}\label{eq-W-Heis-dilation}
\lbrace W_\Heis(f\circ \delta_\lambda), f\in L^2(\R_+\times \Heis; dt\otimes \mu)\rbrace\stackrel{\mathcal{D}}{=}\lbrace\lambda^{-(n+1)} W_\Heis(f), f\in L^2(\R_+\times \Heis; dt\otimes \mu)\rbrace
\end{equation}
\item Invariance by rotations (as in Proposition \ref{prop-rotation}). We have
\begin{equation}\label{eq-W-Heis-rotation}
\lbrace W_\Heis(f\circ R_\theta);  f\in L^2(\R_+\times \Heis; dt\otimes \mu)\rbrace\stackrel{\mathcal{D}}{=}\lbrace W_\Heis(f);   f\in L^2(\R_+\times \Heis; dt\otimes \mu))\rbrace.
\end{equation}
\item Invariance by left translation (as in Propostion \ref{prop-translation}). It is readily checked that
\begin{equation}\label{eq-W-Heis-translation}
\lbrace W_\Heis(f\circ L_x),  f\in L^2(\R_+\times \Heis; dt\otimes \mu)\rbrace\stackrel{\mathcal{D}}{=}\lbrace W_\Heis(f),  f\in L^2(\R_+\times \Heis; dt\otimes \mu)\rbrace.
\end{equation}
\end{itemize}
We also introduce an operator $\cg_\alpha$ which is a reformulation of the form $K_{\alpha, p,e}$ in \eqref{eq-K-norm} as an operator from $L^2(\Heis, \mu)$ to $L^2(\Heis, \mu)$.
\begin{definition}
Let $n/2+1/2<\alpha<n/2+1$ and recall that $G_\alpha$ is defined by \eqref{green function}. We define an operator $\cg_\alpha$ on $L^2(\Heis, \mu)$ by
\begin{equation}\label{eq-cg-s}
\cg_\alpha f(p)=\int_\Heis \left(G_\alpha(p,q)-G_\alpha(e,q)\right)f(q)d\mu(q),\quad p\in \Heis.
\end{equation}
Notice that formally, for $\mu$ almost all $p\in \Heis$ we have 
\[
\cg_\alpha f(p)=(-\Delta)^{-\alpha}f(p)-(-\Delta)^{-\alpha}f(e).
\]
\end{definition}

We are now ready to introduce the Gaussian field on $\R_+\times\Heis$ which is analogous to a Brownian motion in time and fractional Brownian motion in space.
\begin{proposition}\label{prop-W-pt}
For $n/2+1/2<\alpha<n/2+1$, recall that $\cg_\alpha$ is defined by \eqref{eq-cg-s}, and consider the white noise $W_\Heis$ introduced above. Then the process $\{\bW_{\!\al}(t,x); t\in\R_+, x\in\Heis\}$ is properly defined by 
\begin{equation}\label{eq-W-ptwise}
\bW_{\!\al}(t,x)=W_\Heis (G_\alpha(x, \cdot)-G_\alpha(e,\cdot))
=\int_{\R_+\times\Heis} \mathbbm{1}_{[0,t]}(r)(G_\alpha(x, q)-G_\alpha(e,q)) \, dW_\Heis(r,q) .
\end{equation}
One can also formally express $\bW_{\!\alpha}(t,x)$ as
\[
\bW_{\!\al}(t,x)=\cg_\alpha(W_\Heis(t,\cdot))(x).
\]
\end{proposition}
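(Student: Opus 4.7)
The plan is to show that the right-hand side of \eqref{eq-W-ptwise} is a well-defined Wiener integral with respect to the white noise $W_\Heis$. Since $W_\Heis$ is an isonormal Gaussian family on $L^2(\R_+ \times \Heis, dt \otimes \mu)$, it suffices to verify that for each fixed $t > 0$ and $x \in \Heis$, the integrand
\[
F_{t,x}(r,q) := \mathbbm{1}_{[0,t]}(r) \left( G_\alpha(x,q) - G_\alpha(e,q) \right)
\]
belongs to $L^2(\R_+ \times \Heis, dt \otimes \mu)$.

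The $L^2$-norm factorizes as
\[
\| F_{t,x} \|_{L^2(\R_+ \times \Heis)}^2 = t \cdot \int_\Heis \bigl( G_\alpha(x,q) - G_\alpha(e,q) \bigr)^2 d\mu(q),
\]
so the problem reduces to controlling the spatial integral. This is precisely where Proposition \ref{prop-integrability-G} applies: setting $y = e$ in \eqref{a4} yields
\[
\int_\Heis \bigl( G_\alpha(x,q) - G_\alpha(e,q) \bigr)^2 d\mu(q) \le C \, d_{cc}(x,e)^{4\alpha - 2(n+1)},
\]
which is finite for every $x \in \Heis$. Therefore $F_{t,x} \in L^2(\R_+ \times \Heis)$ and the stochastic integral $\bW_\alpha(t,x) = W_\Heis(F_{t,x})$ is well-defined as a centered Gaussian random variable with finite second moment $C \, t \, d_{cc}(x,e)^{4\alpha - 2(n+1)}$.

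For the formal identification $\bW_\alpha(t,x) = \cg_\alpha(W_\Heis(t,\cdot))(x)$, observe that definition \eqref{eq-cg-s} of $\cg_\alpha$ consists in integrating the deterministic kernel $G_\alpha(x, q) - G_\alpha(e, q)$ against a function $f(q) \, d\mu(q)$. Replacing this deterministic integration with white-noise integration $dW_\Heis(r, q)$ on $[0, t] \times \Heis$ recovers exactly the right-hand side of \eqref{eq-W-ptwise}. Since $W_\Heis(t, \cdot)$ is only a distribution in the spatial variable, this identity should be understood formally, with \eqref{eq-W-ptwise} serving as the rigorous meaning.

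The only substantive step is the $L^2$-bound on the kernel difference, which is already provided by Proposition \ref{prop-integrability-G}; the rest is a direct appeal to the white-noise integration framework and requires no further estimates.
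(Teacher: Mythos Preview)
Your proof is correct and follows exactly the approach of the paper, which simply states that the well-definedness of \eqref{eq-W-ptwise} ``directly stems from relations~\eqref{eq-cov-white} and~\eqref{a4}''. You have merely spelled out this one-line justification in detail: the isonormal structure \eqref{eq-cov-white} reduces the question to an $L^2$-bound on the integrand, and the bound \eqref{a4} from Proposition~\ref{prop-integrability-G} provides it. One minor imprecision: the second moment is \emph{bounded by} $C\,t\,d_{cc}(x,e)^{4\alpha-2(n+1)}$, not equal to it, since \eqref{a4} is an inequality.
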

\begin{proof}
The proper definition of the right hand side of \eqref{eq-W-ptwise} directly stems from relations~\eqref{eq-cov-white} and~\eqref{a4}.
\end{proof}

Next we check  that Proposition \ref{prop-dilation} (invariance by dilations), Proposition \ref{prop-rotation} (invariance by rotations) and Proposition \ref{prop-translation} (invariance by left-translation) are still valid for $\bW_{\!\al}$ in the regime $n/2+1/2<\alpha<n/2+1$. Moreover, some pointwise versions of \eqref{eq-dilation} and \eqref{eq-rotation-d} are available. Let us start with the dilation invariance alluded to above.

\begin{proposition}\label{prop-dilation-pt}
Let $\bW_\alpha$  be the field described in Proposition \ref{prop-W-pt}, with $n/2+1/2<\alpha<n/2+1$.  For any $\lambda>0$, we have that
\begin{equation}\label{eq-dilation-pt}
 \{\bW_{\!\al}(t,\delta_\lambda x); t\ge0,x\in\Heis\}\stackrel{\mathcal{D}}{=}\lambda^{2\alpha-(n+1)}\{\bW_{\!\al}(t,x); t\ge0,x\in\Heis\}.
\end{equation}
\end{proposition}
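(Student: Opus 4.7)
Both sides of \eqref{eq-dilation-pt} define centered Gaussian processes on $\R_+\times\Heis$, so it suffices to verify that their covariance structures coincide. From the stochastic integral representation \eqref{eq-W-ptwise} and the white noise covariance \eqref{eq-cov-white}, one obtains
\begin{equation*}
\bE\bigl[\bW_\alpha(t,\delta_\lambda x)\bW_\alpha(s,\delta_\lambda y)\bigr]=(t\wedge s)\int_\Heis \bigl(G_\alpha(\delta_\lambda x,q)-G_\alpha(e,q)\bigr)\bigl(G_\alpha(\delta_\lambda y,q)-G_\alpha(e,q)\bigr)\, d\mu(q),
\end{equation*}
and the whole argument reduces to extracting the scaling factor from this integral via the change of variable $q=\delta_\lambda q'$.

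The one ingredient that is not quite stated as is in the paper is the two-variable dilation identity
\begin{equation*}
G_\alpha(\delta_\lambda p,\delta_\lambda q)=\lambda^{-2(n+1-\alpha)}G_\alpha(p,q),
\end{equation*}
which I would derive by writing $G_\alpha(p,q)$ in terms of the one-variable kernel $G_\alpha(p^{-1}q)$ (the left-invariance underlying \eqref{eq-G-trans}), using the group-homomorphism identity $(\delta_\lambda p)^{-1}\delta_\lambda q=\delta_\lambda(p^{-1}q)$, and invoking the one-variable scaling \eqref{eq-dil-kernels}. Since $\delta_\lambda e=e$, this identity applies uniformly to both $G_\alpha(\delta_\lambda x,\delta_\lambda q')$ and $G_\alpha(e,\delta_\lambda q')$, so a common factor $\lambda^{-2(n+1-\alpha)}$ can be pulled out of each bracket in the integrand.

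Combining this with the Jacobian $d\mu(\delta_\lambda q')=\lambda^{2(n+1)}d\mu(q')$ (since $\mu$ is Lebesgue measure on $\R^{2n+1}$ and the homogeneous dimension is $Q=2(n+1)$), the total prefactor becomes $\lambda^{2(n+1)}\cdot\lambda^{-4(n+1-\alpha)}=\lambda^{2[2\alpha-(n+1)]}$, which is precisely the covariance scaling of the process $\lambda^{2\alpha-(n+1)}\bW_\alpha$. The equality \eqref{eq-dilation-pt} then follows by matching covariances of centered Gaussian processes. I do not anticipate a substantive obstacle; the only care needed is to establish the two-variable scaling of $G_\alpha$ first, and to exploit $\delta_\lambda e=e$ so that the subtracted Green function $G_\alpha(e,\cdot)$ rescales by exactly the same factor as $G_\alpha(\delta_\lambda x,\cdot)$, preserving the structure $G_\alpha(\cdot,q)-G_\alpha(e,q)$ under the change of variables.
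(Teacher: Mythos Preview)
Your proposal is correct and follows essentially the same route as the paper: both use the scaling identity $G_\alpha(\delta_\lambda p,\delta_\lambda q)=\lambda^{-2(n+1-\alpha)}G_\alpha(p,q)$ together with $\delta_\lambda e=e$ and the Jacobian $d\mu(\delta_\lambda q')=\lambda^{2(n+1)}d\mu(q')$. The only stylistic difference is that the paper manipulates the stochastic integral representation directly and then invokes the pre-established dilation invariance \eqref{eq-W-Heis-dilation} of the white noise $W_\Heis$, whereas you unfold that invariance by computing covariances explicitly; the two computations are equivalent.
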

\begin{proof}
By its definition \eqref{eq-W-ptwise} we know that
\begin{equation}\label{eq-dilation-pt-a}
\bW_{\!\al}(t,\delta_\lambda x)
=\int_0^t\int_\Heis (G_\alpha(\delta_\lambda x, q)-G_\alpha(e,q))dW_\Heis(s,q).
\end{equation}
Moreover, owing to \eqref{eq-dil-kernels} and the fact that $\delta_\lambda e=e$ we have
\[
G_\alpha(\delta_\lambda x, q)-G_\alpha(e,q)=\lambda^{2\alpha-2(n+1)}\left(G_\alpha( x, \delta_\lambda^{-1}q)-G_\alpha(e,\delta_\lambda^{-1}q)\right).
\]
Plug this identity into  \eqref{eq-dilation-pt-a} in order to get
\begin{equation}\label{eq-W-s-Heis-a}
\bW_{\!\al}(t,\delta_\lambda x)=\lambda^{2\alpha-2(n+1)}W_\Heis(g\circ \delta_\lambda^{-1}),
\end{equation} 
where the function $g$ is defined by
\[
g(s,x)=\mathbbm{1}_{[0,t]}(s)(G_\alpha(x,q)-G_\alpha(e,q)).
\]
We now invoke the invariance \eqref{eq-W-Heis-dilation} in \eqref{eq-W-s-Heis-a} which yields
\[
\bW_{\!\al}(t,\delta_\lambda x)
\stackrel{\mathcal{D}}{=}\lambda^{2\alpha-(n+1)}W_\Heis(g)=\lambda^{2\alpha-(n+1)}\bW_{\!\al}(t,x),
\]
where the last identity stems again from the definition \eqref{eq-W-ptwise} of $\bW_{\!\al}$. This concludes the proof of our claim \eqref{eq-dilation-pt}.
\end{proof}

In the proposition below we state the rotation invariance property of our noise, which is obtained similarly to the case $0<\alpha<n/2+1/2$. 
\begin{proposition}\label{prop-rotation-pt}
Let $n/2+1/2<\alpha<n/2+1$ and let $\bW_\alpha$ be the same random field as in Proposition~\ref{prop-dilation-pt}. For $\theta\in[0,2\pi)$, consider the rotation $R_\theta$ defined by \eqref{eq-rotation}. Then it holds that
\begin{equation}\label{eq-rotation-pt}
 \bW_{\!\al}(t,R_\theta (x))\stackrel{\mathcal{D}}{=}\bW_{\!\al}(t,x) \, ,
\end{equation}
for all $(t,x)\in \R_+\times \Heis$.
\end{proposition}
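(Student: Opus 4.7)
The plan is to mirror the proof of Proposition \ref{prop-dilation-pt}, replacing the dilation scaling by the rotational symmetry. Starting from the definition \eqref{eq-W-ptwise}, we have
\begin{equation*}
\bW_{\!\al}(t,R_\theta x)=\int_0^t\int_{\Heis}\bigl(G_\alpha(R_\theta x,q)-G_\alpha(e,q)\bigr)\,dW_\Heis(s,q).
\end{equation*}
The first step is to exhibit a rotational invariance of the kernel $G_\alpha$, namely $G_\alpha(R_\theta p,R_\theta q)=G_\alpha(p,q)$ for all $p,q\in\Heis$. This will be deduced from \eqref{eq-kernel-rot} and the fact that $R_\theta$ is a group automorphism of $\Heis$, which amounts to checking that the symplectic form $\omega$ is invariant under the planar rotation on $\R^{2n}$ underlying \eqref{eq-rotation}. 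Granted this, since $R_\theta e=e$ and $p_t(p,q)=p_t(p^{-1}q)$, the representation \eqref{green function} yields the desired identity for $G_\alpha$.

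From $G_\alpha(R_\theta x,q)=G_\alpha(x,R_{-\theta}q)$ and $G_\alpha(e,q)=G_\alpha(e,R_{-\theta}q)$, one then has
\begin{equation*}
\bW_{\!\al}(t,R_\theta x)=W_\Heis(g\circ \tilde R_{-\theta}),\qquad g(s,q):=\mathbbm{1}_{[0,t]}(s)\bigl(G_\alpha(x,q)-G_\alpha(e,q)\bigr),
\end{equation*}
where $\tilde R_{-\theta}$ acts only on the spatial variable. Applying the white noise rotation invariance \eqref{eq-W-Heis-rotation} gives $W_\Heis(g\circ \tilde R_{-\theta})\stackrel{\mathcal{D}}{=}W_\Heis(g)=\bW_{\!\al}(t,x)$, which is the one-point version of \eqref{eq-rotation-pt}. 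To upgrade this to the joint distributional statement over $(t,x)\in\R_+\times\Heis$, it suffices to observe that both processes are centered Gaussian and to check equality of covariances; this reduces, via the isometry \eqref{eq-cov-white}, to equality of the corresponding $L^2$ inner products of kernels of the form $\mathbbm{1}_{[0,t]}(s)(G_\alpha(R_\theta x,q)-G_\alpha(e,q))$, which follows by the same change of variable $q\mapsto R_{-\theta}q$ together with invariance of the Haar measure $\mu$ under $R_\theta$ (an immediate consequence of $R_\theta$ being a linear volume-preserving map on $\R^{2n+1}$).

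The main (and only mildly delicate) obstacle is verifying that $R_\theta$ is a group automorphism, i.e.\ confirming the identity $\omega(R_\theta u,R_\theta v)=\omega(u,v)$ for $u,v\in\R^{2n}$. This is a short explicit computation using the definitions, after which everything downstream — the invariance of $p_t(p,q)=p_t(p^{-1}q)$ under $p,q\mapsto R_\theta p,R_\theta q$, the invariance of $G_\alpha$, the translation to the white noise level, and the conclusion via \eqref{eq-W-Heis-rotation} — proceeds by direct transcription of the dilation argument.
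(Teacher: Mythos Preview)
Your proof is correct and follows essentially the same route as the paper: write $\bW_{\!\al}(t,R_\theta x)$ via \eqref{eq-W-ptwise}, use the rotational invariance of $G_\alpha$ together with $R_\theta e=e$ to recast the integrand as $g\circ R_{-\theta}$, and conclude via the white-noise invariance \eqref{eq-W-Heis-rotation}. The paper simply asserts that $G_\alpha$ is invariant under $R_\theta$; you add the justification (checking that $R_\theta$ is a group automorphism by verifying $\omega(R_\theta u,R_\theta v)=\omega(u,v)$, so that $p_t(R_\theta p,R_\theta q)=p_t(p^{-1}q)$ via \eqref{eq-kernel-rot}), and you also upgrade the conclusion to equality of joint finite-dimensional distributions, which is more than the pointwise statement \eqref{eq-rotation-pt} actually requires but is harmless.
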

\begin{proof}
Note that $R_\theta(e)=e$. From \eqref{eq-W-ptwise} we have
\begin{equation}\label{eq-rotation-pt-a}
\bW_{\!\al}(t,R_\theta (x))
=\int_0^t\int_\Heis (G_\alpha(R_\theta (x), q)-G_\alpha(R_\theta(e),q))dW_\Heis(s,q) .
\end{equation}
Since the Green function is invariant under $R_\theta$, we have that
\[
G_\alpha(R_\theta (x), q)-G_\alpha(R_\theta(e),q)=G_\alpha(x, R_{-\theta}(q))-G_\alpha(e,R_{-\theta}(q)) \, .
\] 
Plug this identity into \eqref{eq-rotation-pt-a} and resort to the invariance \eqref{eq-W-Heis-rotation} of  $W_\Heis$ under $R_\theta$. We then obtain the desired conclusion.
\end{proof}

From its definition \eqref{eq-W-ptwise} one can easily observe that $\bW_\alpha$ is not invariant under left-translation, but its increment is. The proposition below verifies such property for $\bW_\alpha$. 
\begin{proposition}\label{prop-translation-pt}
Let $\bW_{\!\al}$ be the same random field as in Proposition~\ref{prop-W-pt}, with $n/2+1/2<\alpha<n/2+1$. For any $x\in\Heis$, let $L_x$ be the left translation defined by \eqref{eq-left-transl}. Then it holds that for any $t\in \R_+$ and $z\in \Heis$,
\begin{equation}\label{eq-rotation-pt}
 \{\bW_{\!\al}(t,L_x y)- \bW_{\!\al}(t,L_x z);y\in\Heis\}\stackrel{\mathcal{D}}{=}\{\bW_{\!\al}(t,y)-\bW_{\!\al}(t,z);y\in\Heis\}.
\end{equation}
\end{proposition}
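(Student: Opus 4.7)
The plan is to mimic the structure of the proofs of Propositions~\ref{prop-dilation-pt} and~\ref{prop-rotation-pt}. Since both sides of the claimed identity are centered Gaussian fields indexed by $y\in\Heis$, it suffices to transport the invariance \eqref{eq-W-Heis-translation} of the underlying white noise $W_\Heis$ through the integral representation \eqref{eq-W-ptwise}.

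First, I would start from \eqref{eq-W-ptwise} to write
\[
\bW_{\!\al}(t,L_xy)-\bW_{\!\al}(t,L_xz)
=\int_0^t\!\int_{\Heis}\bigl(G_\alpha(xy,q)-G_\alpha(xz,q)\bigr)\,dW_\Heis(s,q).
\]
Next, invoking the left-invariance of the Green kernel \eqref{eq-G-trans} (equivalently, $G_\alpha(xp,q)=G_\alpha(p,x^{-1}q)$), we have $G_\alpha(xy,q)-G_\alpha(xz,q)=G_\alpha(y,x^{-1}q)-G_\alpha(z,x^{-1}q)$. Introducing the family of test functions $f_y(s,q):=\mathbbm{1}_{[0,t]}(s)\bigl(G_\alpha(y,q)-G_\alpha(z,q)\bigr)$, this computation rewrites the increment as $W_\Heis(f_y\circ L_{x^{-1}})$. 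Note that each $f_y$ belongs to $L^2(\R_+\times\Heis;dt\otimes\mu)$, since Proposition~\ref{prop-integrability-G} guarantees the required $L^2$-integrability in the spatial variable.

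Then I would apply the translation invariance \eqref{eq-W-Heis-translation} to the whole family $\{f_y\circ L_{x^{-1}}\}_{y\in\Heis}$. Because \eqref{eq-W-Heis-translation} is an identity of isonormal families (hence preserves all joint finite-dimensional distributions), it follows that
\[
\{W_\Heis(f_y\circ L_{x^{-1}});\,y\in\Heis\}\stackrel{\mathcal{D}}{=}\{W_\Heis(f_y);\,y\in\Heis\}
=\{\bW_{\!\al}(t,y)-\bW_{\!\al}(t,z);\,y\in\Heis\},
\]
which is exactly the asserted identity.

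The only step requiring any care is the joint (rather than marginal) invariance: one must ensure the equality in distribution holds at the level of the whole family in $y$, not just pointwise. This is why I would package the argument through the isonormal point of view, since \eqref{eq-W-Heis-translation} was stated in that form. Alternatively, a direct covariance check works: for any $y_1,y_2\in\Heis$, the change of variables $q\mapsto xq$ in the covariance formula \eqref{eq-cov-white}, combined with the left-invariance of the Haar measure $\mu$ and the identity $G_\alpha(xy,q)-G_\alpha(xz,q)=G_\alpha(y,x^{-1}q)-G_\alpha(z,x^{-1}q)$, shows that the covariance $\mathbf{E}[(\bW_{\!\al}(t,xy_1)-\bW_{\!\al}(t,xz))(\bW_{\!\al}(t,xy_2)-\bW_{\!\al}(t,xz))]$ is independent of $x$, which yields the conclusion for centered Gaussian fields.
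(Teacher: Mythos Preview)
Your proof is correct and follows essentially the same route as the paper: write the increment via \eqref{eq-W-ptwise}, use the left-invariance of $G_\alpha$ to recast the integrand as a left-translate of a fixed test function, and then invoke \eqref{eq-W-Heis-translation}. The paper's proof is simply a terser version of exactly this argument, without your additional remarks on integrability and on joint versus marginal invariance.
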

\begin{proof}
From \eqref{eq-W-ptwise} we have
\begin{equation}\label{eq-translation-pt-a}
 \bW_{\!\al}(t,L_x y)- \bW_{\!\al}(t,L_x z)
=\int_0^t\int_\Heis (G_\alpha(xy, q)-G_\alpha(xz,q))dW_\Heis(s,q) .
\end{equation}
The conclusion then follows from the left translation invariance property of the green function and \eqref{eq-W-Heis-translation}. 
\end{proof}

At last let us address the H\"older continuity of the field $\bW$.
\begin{proposition}\label{prop-W-alpha_Lp}
Let $\bW_{\!\alpha}$  be the field described in Proposition \ref{prop-W-pt}, with $n/2+1/2<\alpha<n/2+1$. Then
\begin{enumerate}[wide, labelwidth=!, labelindent=0pt, label=\textnormal{(\roman*)}]
\setlength\itemsep{.1in}

\item 
For all $p\ge1$, there exists a constant $C_p$ such that for any $t\in \R_+$ and $x,y\in \Heis$,
\begin{equation}\label{eq-W-alpha_Lp}
\|\bW_{\al}(t,x)-\bW_{\al}(t,y)\|_{L^p(\Omega)}\le C_p d_{cc}(x,y)^{2\alpha-(n+1)}.
\end{equation}
\item 
For every compact set $K\subset \Heis$ and every $\epsilon>0$, the process $x\mapsto \bW_\al(t,x)$ is $(2\al-(n+1)-\epsilon)$-H\"older continuous on $K$.
\end{enumerate}
\end{proposition}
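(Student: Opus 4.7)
\textbf{Proof plan for Proposition \ref{prop-W-alpha_Lp}.}
The strategy is to reduce the $L^p$ estimate (i) to an $L^2$ computation via equivalence of Gaussian moments, and then to obtain the H\"older continuity (ii) by applying Kolmogorov's continuity criterion on the metric space $(\Heis,d_{cc})$. The key inputs are the Wiener isometry underlying \eqref{eq-cov-white}, the kernel estimate \eqref{a4}, and the observation that the relevant dimension for covering arguments on $\Heis$ with the Carnot–Carath\'eodory distance is the Hausdorff dimension $Q=2(n+1)$ rather than the topological dimension $2n+1$.

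For part (i), I would start from the representation \eqref{eq-W-ptwise} and note that
\[
\bW_\alpha(t,x)-\bW_\alpha(t,y)=\int_{\R_+\times\Heis}\mathbbm{1}_{[0,t]}(r)\bigl(G_\alpha(x,q)-G_\alpha(y,q)\bigr)\,dW_\Heis(r,q)
\]
is a centered Gaussian random variable, since it is a Wiener integral of a deterministic kernel against the white noise $W_\Heis$. Using the isometry property \eqref{eq-cov-white}, its variance is exactly
\[
\E\bigl[(\bW_\alpha(t,x)-\bW_\alpha(t,y))^2\bigr]=t\int_{\Heis}\bigl(G_\alpha(x,q)-G_\alpha(y,q)\bigr)^2 d\mu(q),
\]
which by Proposition \ref{prop-integrability-G} is bounded by $Ct\,d_{cc}(x,y)^{4\alpha-2(n+1)}$. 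The equivalence of $L^p$ and $L^2$ norms for Gaussian variables then upgrades this variance estimate to the $L^p$ bound \eqref{eq-W-alpha_Lp}, with a constant $C_p$ that absorbs the $\sqrt{t}$ factor (or is taken uniform on any compact time interval).

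For part (ii), I would invoke Kolmogorov's continuity criterion for random fields indexed by a compact subset of the metric space $(\Heis,d_{cc})$. The crucial geometric input is that, by the equivalence \eqref{eq-cc-dist-homo} between $d_{cc}$ and the homogeneous distance, a compact set $K\subset\Heis$ of $d_{cc}$-diameter $D$ admits a covering by $O((D/\varepsilon)^{Q})$ $d_{cc}$-balls of radius $\varepsilon$, with $Q=2(n+1)$ the Hausdorff dimension. Combining this covering estimate with the bound from (i), the standard dyadic chaining argument produces a modification of $x\mapsto\bW_\alpha(t,x)$ that is H\"older continuous on $K$ with any exponent strictly less than $2\alpha-(n+1)-Q/p$, provided $(2\alpha-(n+1))p>Q$ (which is possible since $\alpha>(n+1)/2$ ensures $2\alpha-(n+1)>0$). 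Letting $p\to\infty$ gives H\"older regularity with any exponent less than $2\alpha-(n+1)$, proving claim (ii).

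The only delicate point — and what I would expect to double-check carefully — is the use of the Hausdorff dimension $Q=2(n+1)$ rather than the topological dimension $2n+1$ in the Kolmogorov argument. This is legitimate because the proof of Kolmogorov's theorem only uses the covering/volume-growth properties of the ambient metric space, and with respect to $d_{cc}$ the Haar measure satisfies $\mu(B_{d_{cc}}(p,r))\asymp r^{Q}$ for small $r$, a direct consequence of the scaling \eqref{eq-cc-dist-homo}. Apart from this, the argument is essentially the classical one.
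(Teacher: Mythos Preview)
Your proposal is correct and follows essentially the same approach as the paper: reduce (i) to $p=2$ via Gaussian hypercontractivity, compute the variance using the white-noise isometry together with Proposition~\ref{prop-integrability-G}, and then invoke Kolmogorov's criterion for (ii). Your treatment is in fact more careful than the paper's, which simply appeals to ``a standard use of Kolmogorov's criterion'' without spelling out that the relevant dimension is the Hausdorff dimension $Q=2(n+1)$ of $(\Heis,d_{cc})$, and you also correctly flag the $\sqrt{t}$ factor that the paper silently absorbs into $C_p$.
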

\begin{proof}
Since $\bW_\al$ is a centered Gaussian process, we only need to prove \eqref{eq-W-alpha_Lp} for $p=2$. Moreover, owing to \eqref{eq-W-ptwise} we have
\[
\bE\bigg((\bW_{\!\al}(t,x)-\bW_{\!\al}(t,y))^2\bigg)=\int_{\Heis} (G_\alpha(x, q)-G_\alpha(y,q))^2 d\mu(q).
\]
Therefore we get \eqref{eq-W-alpha_Lp} for $p=2$ thanks to a direct application of Proposition \ref{prop-integrability-G}. This proves item~(i) above. Our claim (ii) is then obtained by a standard use of Kolmogorov's criterion. 
\end{proof}

\begin{remark}
In Section \ref{sec:s-less-than-1} and \ref{sec:s-larger-than-1} we have respectively treated the cases
\[
0<\al<\frac{n+1}{2},\qquad\text{and} \qquad \frac{n+1}{2}<\al<\frac{n+2}2 .
\]
The situation $\alpha=\frac{n+1}2$ is thus ruled out from our study. It corresponds to a log-correlated type process (see e.g \cite{LSSW}). This kind of log-correlated process deserves a separate analysis and is avoided here for sake of conciseness. 
\end{remark}
\begin{remark}\label{Rmk: Hurst parameter}
The reference \cite{LSSW} is concerned with fractional processes of the form $(-\Delta)^{-\al} W$ in $\R^d$, where $W$ is a white noise. The corresponding Hurst parameter $H_{\R^d}$ is defined in  \cite{LSSW} as $H_{\R^d}=2\al-d/2$. Now according to our Proposition \ref{prop-W-alpha_Lp}, the Hurst parameter in $\Heis$ should be defined as
\[
H_\Heis=2\al-(n+1).
\]
Comparing with the formula for $H_{\R^d}$, the effective dimension of $\Heis$ for the problem at stake is $Q=2(n+1)$. As mentioned in the introduction, this has to be contrasted with the topological dimension $2n+1$ of $\Heis$, and notice that $d_{\rm{eff}}$ coincides with the Hausdorff dimension.
\end{remark}
\section{Existence and uniqueness of the It\^{o} solution}\label{existence and uniqueness}

In this section we establish existence and uniqueness for the solution of the stochastic heat equation~\eqref{eq:pam} interpreted in the It\^{o} sense. We shall explore two different methods in order to do so, namely: (i) Applying a more general result about nonlinear stochastic heat equations on Lie groups contained in \cite{PT}. (ii) Computing and controlling the chaos expansion for the linear equation~\eqref{eq:pam}.

\subsection{Existence and uniqueness through It\^o calculus}
The article~\cite{PT} relies on stochastic integration in infinite dimensions techniques, in order to solve 
nonlinear heat equations on Lie groups. In this section we will see how to apply this method to our specific equation~\eqref{eq:pam}.

We start by recalling some elements of It\^o stochastic integration in our context. 
In all the cases examined in Sections~\ref{sec:s-less-than-1} and~\ref{sec:s-larger-than-1}, our covariance function $\laa=G_{2s}$ in~\eqref{eq-cov} is  more regular than the Dirac delta-function. Therefore following the same arguments as in \cite{Wa}, one can extend our noise $\bW$ to a $\sigma$-finite $L^2$-valued measure $B\mapsto \bW_{\!\al}(B)$ defined for bounded Borel sets $B$ in $\mathbb{R}_{+}\times\Heis$. In particular we introduce 
\begin{align}\label{def: martingale measure}
M_t(A):=\bW_{\!\al}([0,t]\times A),\quad\text{for all}\quad 
t\ge 0, \, A\in\mathcal{B}_b(\Heis).
\end{align}
Let $\{\mathcal{F}_t, t\geq0\}$ be the filtration given by
$$\mathcal{F}_t:=\sigma\lcl M_s(A): 0\leq s\leq t, \, A\in\mathcal{B}_b(\Heis)\rcl 
\vee\mathcal{N},\qquad t\geq 0,$$
which is the natural filtration generated by $\bW$, augmented by all $\mathbb{P}$-null sets $\mathcal{N}$ in $\mathcal{F}$. We also consider the family of subsets of $\mathbb{R}_+\times\Heis\times\Omega$, which contains all sets of the form $\{0\}\times A\times F_0$ (where $A$ is a Borel subset of $\Heis$ and $F_0\in\mathcal{F}_0$), as well as $(s,t]\times A\times F$ with $0\leq s<t$ (where $F\in\mathcal{F}_s$). This class of sets is called the class of predictable rectangles. The $\sigma$-field generated by predictable rectangles is called the predictable $\sigma$-field, which is denoted by $\mathcal{P}$. Sets in $\mathcal{P}$ are called predictable sets. A random field $X$ is called predictable if $X$ is $\mathcal{P}$-measurable.

\subsubsection{Case of $\bW_{\!\al}$ with $\alpha\in(0,\frac{n+1}{2})$} Fix $\alpha\in(0,\frac{n+1}{2})$ and consider $\bW_{\!\al}$ defined by the covariance~\eqref{eq-cov-0<s<1}. We are now ready to introduce a reasonable class of It\^o integrable fields on $\R_{+}\times\Heis$. Namely for $p\geq2$, denote by $\mathcal{P}_p$ the set of all predictable random fields such that
\begin{align}\label{def: P_p}
\|f\|_{M,p}^2:=\int_{\mathbb{R}_{+}}ds\int_{\Heis\times\Heis}G_{2s}(q_1,q_2)\|f(s,q_1)f(s,q_2)\|_{\frac{p}{2}}\, \mu(dq_1)\mu(dq_2)<\infty,
\end{align}
where we recall that $\mu$ stands for the Haar measure on $\Heis$ and  $\|\cdot\|_p$ denotes the $L^p(\Omega)$-norm.  We have the following Proposition, which can be proved along the same lines as in \cite{Wa}.

\begin{proposition}
Suppose that for some $t>0$ and $p\geq2$, a random field $X=\{X(s,q): (s,q)\in(0,t)\times\Heis\}$ has the following properties,

\vspace{-.05in}

\begin{enumerate}[wide, labelwidth=!, labelindent=0pt, label=\textnormal{(\roman*)}]
\setlength\itemsep{.0in}
\item
 $X(s,\cdot)$ is adapted to $\mathcal{F}_s$;
\item
$X$ is jointly measurable with respect to $\mathcal{B}(\mathbb{R}_{+}\times\Heis)\otimes\mathcal{F}$;
\item
$\|X\|_{M,p}<\infty$, where the $(M,p)$-norm is defined by \eqref{def: P_p}.
\end{enumerate}
Then the field $X \1_{(0,t)}$ is in $\mathcal{P}_p$.
\end{proposition}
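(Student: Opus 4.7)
The plan is to follow the classical Walsh-type approach (as in \cite{Wa}): approximate $X \1_{(0,t)}$ by predictable fields in the $(M,p)$-norm, and then exploit the closedness of the space $\mathcal{P}_p$. The three ingredients needed are (a) an explicit approximating sequence that is predictable by construction, (b) convergence of the approximation to $X\1_{(0,t)}$ in $\|\cdot\|_{M,p}$, and (c) the fact that a $\|\cdot\|_{M,p}$-limit of predictable fields is predictable.

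First, I would reduce to the bounded case. Set $X_N := X \1_{\{|X| \le N\}}$; by dominated convergence applied inside the integral defining $\|\cdot\|_{M,p}$ (together with $\|X\|_{M,p} < \infty$), one has $\|X_N - X\|_{M,p} \to 0$. Each $X_N$ still satisfies conditions~(i) and~(ii). Thus it suffices to treat bounded $X$.

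Next, the approximation step. Extend $X$ by zero outside $(0,t)\times\Heis$ and, for $h \in (0,1)$, define the time-averaged field
\begin{equation*}
X_h(s,q) := \frac{1}{h}\int_{(s-h)\vee 0}^{s} X(r,q)\,dr,\quad (s,q)\in\R_{+}\times \Heis.
\end{equation*}
Since $X$ is jointly measurable and adapted, Fubini shows that $X_h(s,\cdot)$ is $\mathcal{F}_s$-measurable, and $s\mapsto X_h(s,q)$ is continuous for every $q$. A continuous adapted process is left-continuous and adapted, hence predictable, so $X_h$ is predictable. Moreover $X_h$ is bounded by the same constant as $X$.

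The key analytic step is to show that $\|X_h - X\1_{(0,t)}\|_{M,p} \to 0$ as $h\downarrow 0$. By Lebesgue's differentiation theorem, for a.e.\ $s$ and every $q$, $X_h(s,q) \to X(s,q)\1_{(0,t)}(s)$ pointwise (after passing to a subsequence if one works with representatives). Combined with the uniform bound $|X_h| \le N$, the Cauchy--Schwarz-type factor $\|X_h(s,q_1) X_h(s,q_2)\|_{p/2}$ is dominated by $N^2$, and using
\begin{equation*}
\int_{0}^{t}\!\int_{\Heis\times\Heis}\! G_{2\alpha}(q_1,q_2)\,\mu(dq_1)\mu(dq_2)\,ds < \infty
\end{equation*}
locally (via the bound \eqref{eq-kernel-cc} combined with the compact spatial support that one may further impose by truncation, or directly via $\|X\|_{M,p}<\infty$), dominated convergence yields $\|X_h - X\1_{(0,t)}\|_{M,p}\to 0$.

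Finally, closedness: because $\|\cdot\|_{M,p}$ is a norm on equivalence classes of predictable fields, a Cauchy sequence of predictable fields that converges in this norm has a limit which is itself predictable (after passing to an a.e.\ convergent subsequence). Applying this to $X_h$ shows $X\1_{(0,t)} \in \mathcal{P}_p$. The main obstacle I anticipate is the convergence in step three: once one has passed to bounded $X$ and localized the spatial support appropriately, it is a routine dominated-convergence argument using Lebesgue differentiation, but care must be taken because the norm involves the kernel $G_{2\alpha}$ and $L^{p/2}(\Omega)$-norms of products rather than a simple $L^2$-type expression.
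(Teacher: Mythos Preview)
The paper does not actually give a proof of this proposition; it simply asserts that the result ``can be proved along the same lines as in \cite{Wa}'' (Walsh's lecture notes). Your proposal is precisely a Walsh-type argument --- truncation in magnitude, time-averaging to produce a predictable approximation, and dominated convergence in the $(M,p)$-norm --- so it matches the paper's intended approach.

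One point deserves care: in your Step~3 the dominating function cannot be taken as $N^{2}G_{2\alpha}(q_1,q_2)$ over all of $\Heis\times\Heis$, since that integral is infinite. You correctly flag this and suggest either a prior spatial truncation to compact sets (the standard route in Walsh's framework) or directly leveraging $\|X\|_{M,p}<\infty$. The cleanest fix is indeed to first approximate $X$ by $X\mathbf{1}_{K}$ for compact $K\subset\Heis$ (using $\|X\|_{M,p}<\infty$ and monotone/dominated convergence) before the time-averaging step; then $N^{2}G_{2\alpha}\mathbf{1}_{K\times K}$ is integrable and the rest of your argument goes through. With that small ordering adjustment the proof is complete and in line with the paper's reference.
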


We now start the construction of a It\^o type stochastic integral with respect to $\bww_\alpha$. The definition below handles the case of elementary processes.
\begin{definition}\label{def: elem proc}
An elementary process $g$ is a process given by
$$
g(s,q)=\sum_{i=1}^n\sum_{j=1}^mX_{i,j}\mathbf{1}_{(a_i,b_i]}\mathbf{1}_{A_j},
$$
where $n$ and $m$ are finite positive integers, $-\infty<a_1<b_1<\cdots<a_n<b_n<\infty$, the sets $A_j$ are elements of $\mathcal{B}_b(\Heis)$ and $X_{i,j}\in\mathcal{F}_{a_i}$. The integral of such a process with respect to $\bW_{\!\al}$ is defined as
\begin{align}\label{isometry-stoch integral}
\int_{\mathbb{R}^+}\int_\Heis g(s,q)\bW_{\!\al}(ds,dq)=\sum_{i=1}^n\sum_{j=1}^mX_{i,j}\bW_{\!\al}({(a_i,b_i]}\times{A_j}),
\end{align}
where we recall that the quantities $\bW_{\!\al}([a_i,b_i]\times A_j)$ are introduced in \eqref{def: martingale measure}. Moreover, for the elementary process $g$ we have 
\begin{align}\label{Ito iso}
\bE\left[\left(\int_{\mathbb{R}^+}\int_\Heis g(s,q)\bW_{\!\al}(ds,dq)\right)^2\right]=\|f\|^2_{M,2},
\end{align}
where the norm $\|f\|_{M,2}$ is given by \eqref{def: P_p}.
\end{definition}

Predictable processes are the natural class of processes which can be integrated with respect to $\bW_{\!\al}$. In the proposition below we present the extension of It\^{o}'s integral to processes in $\mathcal{P}_2$. 
\begin{proposition}
Recall that the set $\mathcal{P}_2$ is defined by the norm \eqref{def: P_p}. Then the following holds true.

\begin{enumerate}[wide, labelwidth=!, labelindent=0pt, label={\textnormal{(\roman*)}}]
\setlength\itemsep{.05in}
\item 
The space of elementary processes defined in Definition \ref{def: elem proc} is dense in $\mathcal{P}_2$.
\item 
For $g\in\mathcal{P}_2$, the stochastic integral $\int_{\mathbb{R}^+}\int_\Heis g(s,x)\bW_{\!\al}(ds,dx)$ is defined as the $L^2(\Omega)$-limit of elementary processes approximating $g$, and \eqref{isometry-stoch integral} still holds true.
\end{enumerate}
\end{proposition}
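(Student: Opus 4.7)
The proof follows the standard pattern of Walsh's construction of stochastic integrals against a worthy martingale measure, adapted to the present spatially correlated setting. Part (ii) is a soft consequence of part (i) together with the It\^o isometry \eqref{Ito iso}, so the heart of the argument lies in the density statement in (i).

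For part (i), I would proceed by a truncation plus monotone class argument. First, given $g\in\mathcal{P}_2$, define a truncation $g_N(s,q):= g(s,q)\,\mathbf{1}_{\{|g(s,q)|\le N\}}\,\mathbf{1}_{[0,N]}(s)\,\mathbf{1}_{K_N}(q)$, where $\{K_N\}$ is an exhausting sequence of relatively compact subsets of $\Heis$. Dominated convergence applied to the double integral defining $\|\cdot\|_{M,2}$, with dominating function $G_{2\alpha}(q_1,q_2)\,\|g(s,q_1)g(s,q_2)\|_{1}$, yields $\|g-g_N\|_{M,2}\to 0$. This reduces the density problem to bounded predictable processes supported on a fixed set of the form $[0,T]\times K$ with $K$ relatively compact.

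Next, let $\mathcal{M}$ denote the class of bounded predictable processes supported on $[0,T]\times K$ which can be approximated in $\|\cdot\|_{M,2}$ by elementary processes. Clearly $\mathcal{M}$ is a vector space containing the indicators of predictable rectangles $(s,t]\times A\times F$ with $A\in\mathcal{B}_b(\Heis)$ and $F\in\mathcal{F}_s$. The crucial estimate is
\begin{equation*}
\|g\|_{M,2}^{2}\le \|g\|_{\infty}^{2}\,T\int_{K\times K}G_{2\alpha}(q_1,q_2)\,d\mu(q_1)\,d\mu(q_2),
\end{equation*}
which is finite thanks to the upper bound \eqref{eq-kernel-cc} together with the local integrability exponent condition $2(n+1-2\alpha)<Q=2(n+1)$, equivalent to $\alpha>0$. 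This inequality shows that $\mathcal{M}$ is closed under uniformly bounded pointwise monotone limits, so the functional monotone class theorem yields $\mathcal{M} = \{\textrm{all bounded predictable processes supported on } [0,T]\times K\}$. Combined with the truncation step, this proves part (i).

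For part (ii), the isometry \eqref{Ito iso} says that the elementary stochastic integral extends to a linear isometry from $(\mathcal{E},\|\cdot\|_{M,2})$ into $L^2(\Omega)$. Given $g\in\mathcal{P}_2$ and an approximating sequence $(g_n)\subset\mathcal{E}$ furnished by part (i), the isometry applied to differences $g_n-g_m$ shows that $\{\int g_n\,d\bW_{\!\al}\}$ is Cauchy in $L^2(\Omega)$; one defines $\int g\,d\bW_{\!\al}$ as its limit, checks independence of the approximating sequence by concatenating any two such sequences into a single convergent one, and passes to the limit in \eqref{Ito iso} to recover the isometry on the whole of $\mathcal{P}_2$. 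The only non-formal point in the whole proof is the local integrability of $G_{2\alpha}$ needed to validate the monotone class step; the rest of the argument transfers verbatim from the Walsh--Dalang framework on $\mathbb{R}^d$.
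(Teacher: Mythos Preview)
Your proposal is correct and is precisely the standard Walsh--Dalang argument that the paper invokes by reference; the paper's own proof consists of a single sentence pointing to \cite{Da,Dalang-Quer}, so you have in fact supplied the details that the authors chose to omit. The one substantive technical check you isolate---local integrability of $G_{2\alpha}$ on compacts via \eqref{eq-kernel-cc} and the polar-coordinate bound with exponent $2(n+1-2\alpha)<Q$---is exactly the point where the Heisenberg setting differs from $\mathbb{R}^d$, and your treatment of it is accurate.
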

\begin{proof}
The proof follows by standard arguments. See for example \cite{Da,Dalang-Quer}.
\end{proof}

\begin{remark}
With expression \eqref{def: P_p} in mind, it should be apparent that relation \eqref{Ito iso} is an extension of Proposition \ref{prop-field-W}-(1) to predictable processes. 
\end{remark}

With those preliminary notions in hand, our stochastic PDE  \eqref{eq:pam} is formally written in its mild form as follows,
\begin{align}\label{mild sol}
u(t,q)=J_0(t,q)+I(t,q),
\end{align}
where $p_t$ is the heat kernel in \eqref{eq-Heis-kernel},  $J_0(t,q)=\int_{\Heis}p_t(q^{-1}q_1)d\mu(q_1)$ is the solution to the homogeneous equation, and the stochastic integral is given by
\begin{align}\label{stochastic integral}
I(t,q):=\int_{[0,t]\times\Heis}p_{t-s}(q^{-1}q_1)u(s,q_1)\bW_{\!\al}(ds,dq_1).
\end{align}
We also define more precisely what we mean by It\^o solution to \eqref{eq:pam}.
\begin{definition}\label{def of Ito sol}
A process $u=\{u(t,q), (t,q)\in\mathbb{R}_{+}\times\Heis\}$ is called a random field solution of~\eqref{eq:pam} if the following conditions are met:
\begin{enumerate}[wide, labelwidth=!, labelindent=0pt, label={\textnormal{(\arabic*)}}]
\setlength\itemsep{.02in}
\item $u$ is adapted;

\item $u$ is jointly measurable with respect to $\mathcal{B}(\mathbb{R}_{+}\times\Heis)\times\mathcal{F}$;
\item $\|I(t,q)\|_2<\infty$ for all $(t,q)\in\mathbb{R}_{+}\times\Heis$;
\item The function $(t,q)\to I(t,q)$ is continuous in $L^2(\Omega)$;
\item $u$ satisfies \eqref{mild sol} almost surely for all $(t,q)\in\mathbb{R}_{+}\times\Heis$.
\end{enumerate}
\end{definition}

We can now define rigorously and give a sufficient condition allowing to solve the stochastic heat equation \eqref{eq:pam}.

\begin{theorem}\label{prop:ito-exist-unique}
Fix a regularity parameter $\alpha\in(n/2,(n+1)/2)$ and let $\bW_{\!\al}$ be defined by \eqref{eq-cov-0<s<1}-\eqref{eq-d}. Then there exists a unique solution to \eqref{eq:pam} in the It\^{o} sense, interpreted as in Definition~\ref{def of Ito sol}.
\end{theorem}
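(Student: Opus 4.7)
The plan is to construct the solution via a Picard iteration scheme, verifying that a Dalang-type integrability condition holds in our sub-Riemannian setting, which places us within the general framework of \cite{PT}. Define $u^{(0)}(t,q) = J_0(t,q)$ and iteratively
\begin{align*}
u^{(k+1)}(t,q) = J_0(t,q) + \int_0^t \int_\Heis p_{t-s}(q^{-1}q_1) \, u^{(k)}(s,q_1) \, \bW_\alpha(ds,dq_1).
\end{align*}
Applying the It\^o isometry \eqref{Ito iso} together with the covariance representation~\eqref{eq-d} in terms of $G_{2\alpha}$, the second moment of the $k$-th Picard increment $\Delta u^{(k)}(t,q) := u^{(k+1)}(t,q) - u^{(k)}(t,q)$ can be controlled, after Cauchy--Schwarz and a supremum in $q$, by
\begin{align*}
N_{k+1}(t) \leq \int_0^t N_k(s)\, \Psi(t-s)\, ds, \qquad N_k(t) := \sup_{q\in\Heis} \E\bigl[|\Delta u^{(k-1)}(t,q)|^2\bigr],
\end{align*}
where $\Psi(r) := \int_{\Heis\times\Heis} G_{2\alpha}(q_1,q_2)\, p_r(q_1)\, p_r(q_2)\, d\mu(q_1) d\mu(q_2) = \langle p_r,(-\Delta)^{-2\alpha} p_r\rangle_{L^2(\Heis)}$.

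The heart of the argument is to verify that $\Psi$ is locally integrable at the origin. Invoking the projective Plancherel identity~\eqref{proj Plancherel} together with the diagonalizations~\eqref{HeatKernelFourier} and~\eqref{Fourier transform Laplace} yields, up to a multiplicative constant,
\begin{align*}
\Psi(r) = C \sum_{m\in\N^n} \int_\R (2|m|+n)^{-2\alpha}\, |\lambda|^{n-2\alpha}\, e^{-8r|\lambda|(2|m|+n)}\, d\lambda.
\end{align*}
The change of variable $u = 8r|\lambda|(2|m|+n)$ in the $\lambda$-integral (which converges at the origin thanks to $\alpha<(n+1)/2$) reduces this to
\begin{align*}
\Psi(r) = C_\alpha \, r^{-(n+1-2\alpha)} \sum_{m\in\N^n} (2|m|+n)^{-(n+1)},
\end{align*}
and the series converges since the number of $m\in\N^n$ with $|m|=k$ grows like $k^{n-1}$ while $(n-1)-(n+1)=-2<-1$. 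Consequently, $\int_0^t \Psi(r)\,dr$ is finite precisely when $n+1-2\alpha<1$, that is, when $\alpha>n/2$, which matches our hypothesis on $\alpha$.

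With this key estimate in hand, a classical iteration for Volterra convolution inequalities with integrable singular kernel (see e.g.\ the arguments in \cite{Da}) shows that $\sum_k \|\Delta u^{(k)}(t,q)\|_{L^2(\Omega)}$ converges uniformly in $(t,q)\in[0,T]\times\Heis$, giving a limit $u$ which satisfies the five properties of Definition~\ref{def of Ito sol}. Uniqueness is obtained by applying the same bound to the difference of two candidate solutions and closing the resulting estimate through a Gr\"onwall lemma adapted to the kernel $\Psi$. The main obstacle is really pinpointing the sharp threshold $\alpha>n/2$: a direct route based on the Gaussian kernel bounds~\eqref{eq-heat-kernel-bds} and the pointwise estimate~\eqref{eq-kernel-cc} on $G_{2\alpha}$ would invite the Hausdorff dimension $Q=2(n+1)$ into the game and produce an incorrect exponent; exploiting the projective Fourier machinery of \cite{BCD} in the computation of $\Psi(r)$ is what produces the correct threshold and reflects the interplay between the topological and homogeneous dimensions of $\Heis$ alluded to in the introduction.
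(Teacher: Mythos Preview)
Your argument is correct, but it differs from the paper's own proof in an interesting way. The paper proceeds by invoking the general criterion from \cite{PT}, which reduces matters to checking the real-space condition
\[
\int_{B(e,1)} d_{cc}(e,q)^{-2n}\, G_{2\alpha}(e,q)\, d\mu(q)<\infty,
\]
and then uses the pointwise bound~\eqref{eq-kernel-cc} together with polar coordinates (with Jacobian $r^{Q-1}=r^{2n+1}$) to obtain precisely the threshold $\alpha>n/2$. Your route instead computes the space-time convolution kernel $\Psi(r)=\langle p_r,(-\Delta)^{-2\alpha}p_r\rangle$ exactly on the Fourier side and then closes a Picard iteration; this is essentially the same computation the paper carries out later in the chaos-expansion section (compare your $\Psi(r)$ with the quantity $\hat M_1$ in~\eqref{def hat M_1}--\eqref{est Fourier 1}). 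So both approaches are valid and lead to the same threshold; yours has the merit of being self-contained and of giving an explicit power-law form for $\Psi$, while the paper's argument is shorter because it outsources the iteration to \cite{PT}.

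One point deserves correction, however: your closing remark that ``a direct route based on the Gaussian kernel bounds~\eqref{eq-heat-kernel-bds} and the pointwise estimate~\eqref{eq-kernel-cc} on $G_{2\alpha}$ would $\ldots$ produce an incorrect exponent'' is mistaken. That is exactly what the paper does, and it does yield the correct threshold $\alpha>n/2$: the Hausdorff dimension $Q=2(n+1)$ enters through the polar Jacobian $r^{2n+1}$ and cancels appropriately against the singularity $d_{cc}^{-(4n+2-4\alpha)}$. So the Fourier route is not needed to obtain the sharp condition here; it is simply an alternative (and more explicit) way to reach the same conclusion.
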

\begin{proof}

We shall apply a general result proved in \cite{PT}. Notice that \cite{PT} was considering equation \eqref{eq:pam} interpreted in the infinite dimensional setting. However, the general considerations in \cite{Dalang-Quer} apply to our setting. Therefore, one can identify the random field solution \eqref{mild sol} and the solution constructed in \cite{PT}. 

The sufficient condition given in \cite{PT} can be spelled out as
\begin{align}\label{sufficient cond for exist}
\int_{B(e,1)}d_{cc}(e,q)^{-2n}\Lambda(q)d\mu(q)<\infty.
\end{align}
where $\Lambda$ is featuring in relation \eqref{eq-cov}. For the noise $\bW_{\!\al}$ we thus have $\Lambda=G_{2\alpha}$ as in \eqref{eq-d}. Hence we are reduced to show \eqref{sufficient cond for exist} when  {$0<\alpha<(n+1)/2$}. To this aim we invoke relation~\eqref{eq-kernel-cc}, which allows to write
\begin{align}\label{eq-soln-ub}
\int_{B(e,1)}d_{cc}(e,q)^{-2n}\Lambda(q)d\mu(q)\lesssim\int_{B(e,1)}\frac{d\mu(q)}{d_{cc}(e,q)^{4n+2-4\alpha}}.
\end{align}
Using \cite{MR1942237} we write the integral \eqref{eq-soln-ub} in polar coordinates with respect to $r=d_{cc}(e,q)$. This brings about a Jacobian term $r^{Q-1}$, where $Q=2n+2$ is the homogeneous dimension of $\Heis$ alluded to in Remark \ref{Rmk: Hurst parameter}. We end up with 
\begin{align*}
\int_{B(e,1)}d_{cc}(e,q)^{-2n}\Lambda(q)d\mu(q)\lesssim\int_0^1{r^{2n+1-4n-2+4\al}}dr =\int_0^1\frac{dr}{r^{2n+1-4\alpha}}.
\end{align*}
The latter integral is finite whenever $\alpha>n/2$, which finishes our proof.
\end{proof}

\subsection{Existence and uniqueness through chaos expansions} 

In this section we take another look at equation~\eqref{eq:pam}. Namely we shall write the solution to this equation directly as a random field, by characterizing its chaos expansion. The advantage of this approach is twofold: first it enables to state necessary and sufficient conditions on the covariance function $\laa$ in order to get existence and uniqueness result for the stochastic heat equation. Then we shall also get some valuable information about moments of the solution.

\subsubsection{Preliminaries on  chaos expansions}
In this section we recall the minimal amount of Malliavin calculus tools necessary to state our results. The reader is referred to~\cite{Nu} for more details.

Recall that the Cameron-Martin space $\mathcal{H}$ is defined in \eqref{def mathscrH}. The $m$-th Wiener chaos, denoted by $\mathcal{H}_\m$, is defined as the closed linear span of the random variables of the form $H_\m(\bW_{\!\al}(\varphi))$, where $\varphi$ is an element of $\mathcal{H}$ with norm one and $H_\m$ is the $\m$-th Hermite polynomial. We denote by $I_\m$ the linear isometry between $\mathcal{H}^{\otimes \m}$ (equipped with the modified norm $\sqrt{\m!}\|\cdot\|_{\mathcal{H}^{\otimes \m}}$) and the $\m$-th Wiener chaos $\mathcal{H}_\m$. It is given by $I_\m(\varphi^{\otimes \m})=\m!H_\m(\bW_{\!\al}(\varphi))$, for any $\varphi\in\mathcal{H}$ with $\|\varphi\|_{\mathcal{H}}=1$. 

Any square integrable random variable, which is measurable with respect to the $\sigma$-field generated by $\bW_{\!\al}$, has an orthogonal Wiener chaos expansion of the form
$$F=\bE(F)+\sum_{\m=1}^\infty I_\m(f_\m),$$
where $f_\m$ are symmetric elements of $\mathcal{H}^{\otimes \m}$, uniquely determined by $F$.
This kind of expansion can easily be extended to random fields. Specifically 
consider a field $u=\{u(t,q), t\geq0, q\in\Heis\}$ such that $\bE [u(t,q)^2]<\infty$ for all $t,q$. Then $u(t,q)$ has a Wiener chaos expansion of the form
\begin{align}\label{u chaos expansion}
u(t,q)=\bE[ u(t,q)]+\sum_{\m=1}^\infty I_\m(f_\m(\cdot, t,q)),
\end{align}
where the series converges in $L^2(\Omega)$. With this kind of decomposition in hand, we can now give a definition of Skorohod integrable field.

\begin{definition}\label{def:sko-integral} We say that the random field $u$ with decomposition~\eqref{u chaos expansion} is Skorohod integrable if the deterministic function $\bE[ u]$ sits in the space $\mathcal{H}$, if $f_m$ (considered as a function on $(\R_{+}\times\Heis)^{\m+1}$) is an element of $\mathcal{H}^{\otimes (\m+1)}$ for all $\m\geq1$, and if the following series converges in $L^2(\Omega)$:
$$\bW_{\!\al}(\bE(u))+\sum_{\m=1}^\infty I_{\m+1}(\tilde{f}_\m),
$$
where $\tilde{f}_\m$ denotes the symmetrization of $f_\m$. We will denote the sum of this series by $\delta(u)=\int_0^\infty\int_{\Heis}u \, \delta\bW_{\!\al},$ which we call the Skorohod integral of $u$.
\end{definition}

Note that when $u$ is  the solution to equation \eqref{mild sol} given by Theorem \ref{prop:ito-exist-unique}, the It\^o type stochastic integral \eqref{stochastic integral} is equivalent to a Skorohod integral as introduced in Definition~\ref{def:sko-integral}. Hence by a standard iteration procedure (see e.g. \cite{HN09}), $u$ admits a chaos expansion as in~\eqref{u chaos expansion} with $f_\m$ given explicitly by
\begin{equation}\label{f_n explicit}
f_\m(s_1,q_1,\cdots,s_\m,q_\m, t,q)
=\frac{1}{\m!}p_{t-s_\sigma(\m)}(q,q_{\sigma(\m)})\cdots p_{s_{\sigma(2)}-s_{\sigma(1)}}(q_{\sigma(2)},q_{\sigma(1)})p_{s_{\sigma(1)}}u_0(q_{\sigma(1)}),
\end{equation}
where $\sigma$ denotes the permutation of $\{1,2,\dots,\m\}$ such that $0<s_{\sigma(1)}<\cdots<s_{\sigma(\m)}<t$. 
We can thus state the following existence-uniqueness result, for which more details are provided e.g. in \cite{HHNT, HN09}.

\begin{proposition}\label{general existence chaos}
Equation \eqref{mild sol} admits a unique solution in the It\^{o}-Skorohod sense (that is when $I(t,q)$ in \eqref{stochastic integral} is understood as in Definition \ref{def:sko-integral}) if and only if the following holds:
\begin{align}\label{exist of sol-chaos expansion}
\sum_{\m=0}^\infty \m!\|f_\m(\cdot, t,q)\|_{\mathcal{H}^{\otimes \m}}^2<\infty.
\end{align}
\end{proposition}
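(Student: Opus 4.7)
The plan is to translate the question into an explicit chaos expansion. The key observation is that a mild-form solution to \eqref{mild sol}, if it exists in $L^2(\Omega)$, must have its Wiener chaos kernels uniquely determined by the iterative structure of the equation, leading to the formula \eqref{f_n explicit}. An application of Wiener chaos orthogonality and the isometry for multiple Wiener integrals will then convert $L^2(\Omega)$-integrability of $u(t,q)$ into the criterion \eqref{exist of sol-chaos expansion}.

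For the sufficiency direction ($\Leftarrow$), I would assume \eqref{exist of sol-chaos expansion} and set
\begin{equation*}
u(t,q) := \sum_{\m \ge 0} I_\m\!\left(f_\m(\cdot, t, q)\right),
\end{equation*}
with $f_\m$ given by \eqref{f_n explicit}. Since the permutation $\sigma$ in \eqref{f_n explicit} reorders the time variables in increasing order, each $f_\m$ is symmetric in $((s_i,q_i))_{i=1}^\m$, so the Wiener isometry $\bE[I_\m(g)^2] = \m!\, \|g\|^2_{\mathcal{H}^{\otimes\m}}$ combined with orthogonality between distinct chaoses yields $u(t,q)\in L^2(\Omega)$ with $\bE[u(t,q)^2] = \sum_{\m \ge 0} \m!\, \|f_\m(\cdot,t,q)\|^2_{\mathcal{H}^{\otimes\m}}$. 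One then checks that this $u$ satisfies \eqref{mild sol} by computing the Skorohod integral in \eqref{stochastic integral} chaos by chaos: weighting $I_\m(f_\m(\cdot, s, q_1))$ by $p_{t-s}(q^{-1}q_1)$ and applying $\delta$ shifts the chaos level from $\m$ to $\m+1$ and, after symmetrization in the new variable $(s, q_1)$, reproduces $f_{\m+1}(\cdot, t, q)$ as given by \eqref{f_n explicit}.

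For the necessity and uniqueness direction ($\Rightarrow$), let $u$ be any solution in the sense of Definitions~\ref{def of Ito sol} and~\ref{def:sko-integral}, and write its chaos expansion $u(t,q) = \sum_\m I_\m(g_\m(\cdot, t, q))$ with symmetric $g_\m \in \mathcal{H}^{\otimes\m}$. Substituting this decomposition into \eqref{mild sol} and equating coefficients on each Wiener chaos level produces the recursion
\begin{equation*}
g_{\m+1}(s_1,q_1,\dots,s_{\m+1},q_{\m+1}, t, q) = \mathrm{Sym}\!\left[p_{t-s_{\m+1}}(q^{-1}q_{\m+1})\, g_\m(s_1,q_1,\dots,s_\m,q_\m, s_{\m+1}, q_{\m+1})\right],
\end{equation*}
together with $g_0(t,q) = J_0(t,q)$. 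Unrolling this recursion reproduces \eqref{f_n explicit}, so $g_\m = f_\m$ and the solution is unique. Moreover, by the chaos isometry, $\bE[u(t,q)^2] < \infty$ forces the sum $\sum_\m \m! \|f_\m(\cdot,t,q)\|^2_{\mathcal{H}^{\otimes\m}}$ to be finite, establishing \eqref{exist of sol-chaos expansion}.

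The main technical obstacle is rigorously justifying the chaos-by-chaos computation of the Skorohod integral with respect to the Hilbert structure $\mathcal{H} = L^2(\R_+; \mathcal{W}^{-\alpha,2})$. One must track the symmetrization combinatorics when passing from an $\m$-fold multiple integral to an $(\m+1)$-fold one, and simultaneously verify that each $f_\m$ belongs to $\mathcal{H}^{\otimes\m}$; this last point is most cleanly handled a posteriori, once \eqref{exist of sol-chaos expansion} is assumed. Since the argument runs in parallel to the $\R^d$ framework of \cite{HHNT, HN09}, the Heisenberg-group geometry plays no role in the combinatorics and enters only through the Sobolev space $\mathcal{W}^{-\alpha,2}$ underlying $\mathcal{H}$.
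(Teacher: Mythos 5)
Your proposal is correct and follows essentially the same route the paper takes: the paper derives the kernels \eqref{f_n explicit} by the standard iteration and then states Proposition \ref{general existence chaos} by appeal to the classical chaos-expansion argument of \cite{HHNT, HN09}, which is exactly the construction-plus-recursion scheme you outline (isometry and orthogonality for sufficiency, chaos-by-chaos identification of the kernels for necessity and uniqueness). Your remarks on symmetrization and on verifying $f_\m\in\mathcal{H}^{\otimes\m}$ a posteriori match the standard treatment; no divergence from the paper worth noting.
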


\noindent
In the sequel we will give conditions on $\alpha$ so that \eqref{exist of sol-chaos expansion} is fulfilled. 

\subsubsection{Estimates for a fixed chaos}Having \eqref{exist of sol-chaos expansion} in mind, we now proceed to upper bound each contribution $\|f_\m(\cdot, t,q)\|_{\mathcal{H}^{\otimes \m}}$. Since $f_k$ is given by \eqref{f_n explicit}, we first introduce a notation valid for  $0< s_1<\dots< s_\m<t$,
\begin{align}\label{def: g chaos}g({s},{q},t,{y})=p_{t-s_\m}(y,q_\m)\cdots p_{s_2-s_1}(q_2,q_1),\end{align}
where we use the convention  $q=(q_1,\ldots,q_\m)$ and $s=(s_1,\ldots,s_\m)$. We can now state a first upper bound for $\|f_\m(\cdot, t,q)\|_{\mathcal{H}^{\otimes \m}}$. 

\begin{lemma}
For $k\geq1$, let $f_k$ be the function defined by \eqref{f_n explicit}. Then for $t>0$ and $q\in\Heis$ we have 
\begin{align}\label{chaos bound}
\|f_\m(\cdot, t,q)\|^2_{\mathcal{H}^{\otimes \m}}\lesssim \|u_0\|^2_\infty\left[\frac{1}{k!}\left(\frac{2^{n-1}}{\pi^{n+1}}\right)^kM_{n,k}\right],
\end{align}
where the quantity $M_{n,k}$ is given by (note that we use the convention $s_{k+1}=t$ below):
\begin{align}
M_{n,k}=\int_{[0,t]^k_<}ds\,\prod_{i=1}^k\int_{\mathbb{R}}|\lambda_i|^n \sum_{m_i\in\mathbb{N}^n} |\lambda_i|^{-2\alpha}(2|m_i|+n)^{-2\alpha}e^{-8(s_{i+1}-s_{i})|\lambda_i|(2|m_i|+n)}d\lambda_i  .
\label{Chaos Fourier mode1}
\end{align}
\end{lemma}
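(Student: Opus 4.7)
The argument proceeds in three stages: symmetry reduction, removal of the initial condition $u_0$, and an iterated Cauchy--Schwarz estimate on the heat-kernel chain $g$ followed by a Plancherel computation of the resulting scalar Sobolev norms. First, since $f_k$ in \eqref{f_n explicit} is symmetric in the pairs $(s_i,q_i)$ (the permutation $\sigma$ just orders the times), I restrict the time integration in $\|f_k\|^2_{\mathcal{H}^{\otimes k}}$ to the simplex $\{0<s_1<\cdots<s_k<t\}$ at the cost of a factor $k!$. On this simplex $\sigma$ is trivial and $f_k=\tfrac{1}{k!}\,g(s,\cdot,t,q)\,P_{s_1}u_0(q_1)$ with $g$ as in \eqref{def: g chaos}, so the $1/(k!)^2$ from squaring combines with $k!$ to yield the $1/k!$ appearing in \eqref{chaos bound}.

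Second, since $p_t$ is a probability density, $|P_{s_1}u_0(q_1)|\le\|u_0\|_\infty$. Writing the tensor Sobolev norm through its Green-kernel representation,
\[
\|g\,P_{s_1}u_0\|^2_{(\cw^{-\alpha,2})^{\otimes k}}
=\iint g(q)\,P_{s_1}u_0(q_1)\,g(q')\,P_{s_1}u_0(q_1')\prod_{i=1}^{k}G_{2\alpha}(q_i,q_i')\,d\mu(q)\,d\mu(q'),
\]
and using the pointwise non-negativity of $g$ (a product of heat kernels) and of $G_{2\alpha}$ (clear from \eqref{green function}), I obtain $\|g\,P_{s_1}u_0\|^2\le\|u_0\|_\infty^2\,\|g\|^2_{(\cw^{-\alpha,2})^{\otimes k}}$.

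Third, to bound $\|g\|^2_{(\cw^{-\alpha,2})^{\otimes k}}$, I peel off the chain variables one at a time by Cauchy--Schwarz. Regrouping the integrations in $q_1$ and $q_1'$ produces the kernel
\[
B_1(q_2,q_2'):=\iint p_{s_2-s_1}(q_2,q_1)\,p_{s_2-s_1}(q_2',q_1')\,G_{2\alpha}(q_1,q_1')\,d\mu(q_1)\,d\mu(q_1')
=\langle p_{s_2-s_1}(q_2,\cdot),\,p_{s_2-s_1}(q_2',\cdot)\rangle_{\cw^{-\alpha,2}}.
\]
By Cauchy--Schwarz in $\cw^{-\alpha,2}$ and the left-translation invariance of $\Delta$ (hence of $\|\cdot\|_{\cw^{-\alpha,2}}$), we have $0\le B_1(q_2,q_2')\le\|p_{s_2-s_1}\|^2_{\cw^{-\alpha,2}}$ uniformly in $q_2,q_2'$. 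Since all remaining factors in $\|g\|^2$ are non-negative, this uniform bound pulls outside the integral, leaving $\|g'\|^2_{(\cw^{-\alpha,2})^{\otimes(k-1)}}$ for the shorter chain $g'(q_2,\dots,q_k)=\prod_{i=2}^{k}p_{s_{i+1}-s_i}(q_{i+1},q_i)$; induction on $k$ then gives $\|g\|^2_{(\cw^{-\alpha,2})^{\otimes k}}\le\prod_{i=1}^{k}\|p_{s_{i+1}-s_i}\|^2_{\cw^{-\alpha,2}}$. Finally, Plancherel \eqref{proj Plancherel}, the spectral description \eqref{eq:frac-sobolev-fourier} of $\cw^{-\alpha,2}$, and the diagonal Fourier formula \eqref{HeatKernelFourier} for $p_t$ express each factor $\|p_t\|^2_{\cw^{-\alpha,2}}$ exactly as the single-index integral $\tfrac{2^{n-1}}{\pi^{n+1}}\,4^{-2\alpha}\int_{\R}|\lambda|^n\sum_{m\in\N^n}|\lambda|^{-2\alpha}(2|m|+n)^{-2\alpha}e^{-8t|\lambda|(2|m|+n)}\,d\lambda$ appearing inside $M_{n,k}$. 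Taking the product, integrating over the simplex, and absorbing $4^{-2\alpha k}$ into $\lesssim$ recovers \eqref{chaos bound}.

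The main obstacle is keeping the iterated Cauchy--Schwarz from leaving residual coupling between consecutive variables. The crucial point is that $B_1(q_2,q_2')=\langle p_{s_2-s_1}(q_2,\cdot),\,p_{s_2-s_1}(q_2',\cdot)\rangle_{\cw^{-\alpha,2}}$ is bounded \emph{uniformly} in $(q_2,q_2')$ thanks to left-translation invariance; without this uniformity, the residual cross-term would depend on $q_2,q_2'$ and prevent the induction from collapsing into the clean product form of $M_{n,k}$.
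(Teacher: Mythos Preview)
Your proof is correct and follows essentially the same approach as the paper: symmetry reduction to the simplex, bounding $P_{s_1}u_0$ by $\|u_0\|_\infty$ using positivity of the heat and Green kernels, then an iterated Cauchy--Schwarz on the innermost pair $(q_1,q_1')$ combined with left-translation invariance to get a uniform bound that factors out, and finally the projective Plancherel computation of $\|p_t\|^2_{\cw^{-\alpha,2}}$. The only cosmetic difference is that the paper spells out the translation-invariance step via the unitarity of $U_x^\lambda$ on the Hilbert--Schmidt norm, whereas you invoke left-invariance of $\Delta$ directly; both justifications are valid.
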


\begin{proof}
We start with a couple of easy simplifications. First we have assumed that $u_0$ is a bounded function on $\Heis$. Therefore $p_{s_{\sigma(1)}}u_0$ in the right hand-side of \eqref{f_n explicit} is bounded.  Also note that by symmetry, we only need to evaluate for a particular time simplex $[0,t]_<^\m:=\{(s_1,\ldots,s_\m); 0< s_1<\dots< s_m<t\}$.
So, recalling the definitions of $f_k$ and $g$ in \eqref{f_n explicit} and~\eqref{def: g chaos} respectively, we obtain 
 $$
 \|f_\m(\cdot, t,q)\|^2_{\mathcal{H}^{\otimes \m}}\leq \frac{1}{k!}M_k(t,y) \, \|u_0\|_\infty^2,
 $$ where we have set
\begin{align}\label{H-norm g}
M_k(t,y)=\int_{[0,t]_<^\m}ds\int_{(\Heis)^{2\m}}g(s,q,t,y)\prod_{i=1}^\m G_{2\alpha}(q_i,q_i')g(s,q',t,y)
\, d\mu(q)d\mu(q').
\end{align}
In the sequel we will prove the bound claimed in \eqref{chaos bound} by proving a suitable bound for the quantity $M_k(t,y)$.

In order to compute $M_k(t,y)$ more explicitly, we first look at the integral with respect to $q_1$ and $q_1'$ in the right hand-side of \eqref{H-norm g}. We get an integral of the function
\begin{align}\label{def hat M_1}
\hat{M}_1(t,y,y')=\int_{(\Heis)^2}p_t(y,q)G_{2\alpha}(q,q')p_t(y',q')\, d\mu(q)d\mu(q'),
\end{align}
for a given $y\in\Heis$. By translation invariance, $p_t(y,q)$ takes the form $p_t(y^{-1}q)$. Hence the above integral becomes
\begin{align*}
\hat{M}_1(t,y,y')=\int_{(\Heis)^2}p_t(y^{-1}q)G_{2\alpha}(q,q')p_t((y')^{-1}q') \, d\mu(q)d\mu(q').
\end{align*}
Moreover, since $G_{2\alpha}$ is the kernel for the operator $(-\Delta)^{-2\alpha}$ (see identity \eqref{green function}), we get
\begin{eqnarray}\label{upper bound m=1 2}
\hat{M}_1(t,y,y')
&=&\int_{\Heis}p_t(y^{-1}q)\left((-\Delta)^{-2\alpha}p_t((y')^{-1}\cdot)\right)(q) \, d\mu(q)
\notag\\
&=&\int_{\Heis}\left((-\Delta)^{-\alpha}p_t(y^{-1}\cdot)\right)(q)\left((-\Delta)^{-\alpha}p_t((y')^{-1}\cdot)\right)(q)d\mu(q),
\end{eqnarray}
where we resort to the self-adjointness of $(-\Delta)^{-\alpha}$ for the second identity (see also \eqref{covariance by Laplacian}-\eqref{eq-d}). Therefore one can simply invoke Schwarz inequality in the right hand-side of \eqref{upper bound m=1 2}, which yields
\begin{align}
\hat{M}_1(t,y,y')&\leq\|(-\Delta)^{-\alpha}p_t(y^{-1}\cdot)\|_{L^2(\Heis,\mu)}\cdot 
\|(-\Delta)^{-\alpha}p_t((y')^{-1}\cdot)\|_{L^2(\Heis,\mu)}\nonumber\\
&\leq\sup_{x\in\Heis}\|(-\Delta)^{-\alpha}p_t(x^{-1}\cdot)\|_{L^2(\Heis,\mu)}^2\, .\label{Schwartz for hat M_1}
\end{align}
We now prove that the $L^{2}$ norm in the right hand side of~\eqref{Schwartz for hat M_1} does not depend on $x$. Namely notice that the Fourier transform of $p_t(x^{-1}\cdot)$ can be expressed thanks to~\eqref{eq-Fourier-transf} as
\begin{equation*}
\mathcal{F}p_t(x^{-1}\cdot)(\lambda)
=\int_{\Heis}p_t(x^{-1}q) U^\lambda_q \, d\mu(q)
=\int_{\Heis}p_t(q')U_{xq'}^\lambda \, d\mu(q'),
\end{equation*}
where we have used the change of variable $x^{-1}q=q'$ and the left invariance of $\mu$ for the second identity. Hence invoking the flow property of $U$ we obtain
\begin{equation*}
\mathcal{F}p_t(x^{-1}\cdot)(\lambda)
=\int_{\Heis}p_t(q')U_{x}^\lambda U_{q'}^\lambda \, d\mu(q')
=U_x^\lambda\circ\mathcal{F}p_t(\cdot)(\lambda).
\end{equation*}
Since $U_x^\lambda$ is a unitary operator, it does not change the Hilbert-Schmidt norm of $\mathcal{F}p_t(\lambda)$. Otherwise stated, we have
\begin{equation*}
\| \mathcal{F}p_t(x^{-1}\cdot)(\lambda) \|_{\textsc{hs}}
=
\| \mathcal{F}p_t(\lambda) \|_{\textsc{hs}},
\quad\text{for all } x\in\Heis, \, \la\in\R.
\end{equation*}
 Combining this observation with Plancherel's identity \eqref{eq:plancherel-operator}, the right hand-side of \eqref{Schwartz for hat M_1} becomes
\begin{align}\label{remove effect of shift}
\sup_{x\in\Heis}\|(-\Delta)^{-\alpha}p_t(x^{-1}\cdot)\|_{L^2(\Heis,\mu)}^2=\|(-\Delta)^{-\alpha}p_t\|_{L^2(\Heis,\mu)}^2.
\end{align}
Moreover,  making successive use of the projective Plancherel inequality  \eqref{proj Plancherel}, relation \eqref{Fourier transform Laplace} for the Fourier transform of $(-\Delta)^{-\alpha}f$ and \eqref{HeatKernelFourier} for $\hat{p}_t$, we get
\begin{align}
\|(-\Delta)^{-\alpha}p_t\|_{L^2(\Heis)}^2
=&\frac{2^{n-1}}{\pi^{n+1}}\sum_{m,l \in\mathbb{N}^n} \int_{\mathbb{R}}|\widehat{\left(-\Delta)^{-\alpha}p_t\right)}(m,l,\lambda)|^2|\lambda|^n d\lambda\nonumber\\
\leq&\frac{2^{n-1}}{\pi^{n+1}}\sum_{m\in\mathbb{N}^n}\int_{\mathbb{R}} |\lambda|^n |\lambda|^{-2\alpha}(2|m|+n)^{-2\alpha}e^{-8t|\lambda|(2|m|+n)}d\lambda.\label{est Fourier 1}
\end{align}
We can now plug  \eqref{est Fourier 1} into \eqref{remove effect of shift} and then back in \eqref{Schwartz for hat M_1}. This enables to write  
\begin{align}
\hat{M}_1(t,y,y')\leq\frac{2^{n-1}}{\pi^{n+1}}\sum_{m\in\mathbb{N}^n}\int_{\mathbb{R}} |\lambda|^n |\lambda|^{-2\alpha}(2|m|+n)^{-2\alpha}e^{-8t|\lambda|(2|m|+n)}d\lambda.\label{est Fourier 2}
\end{align}
Recalling that $\hat{M}_1$ is defined by \eqref{def hat M_1}, we can now report \eqref{est Fourier 2} into \eqref{H-norm g} and iterate the procedure over variables $q_i$. We end up with
\begin{multline}
M_k(t,y)
\leq\left(\frac{2^{n-1}}{\pi^{n+1}}\right)^k \\
\times
\int_{[0,t]^k_<}ds\,\prod_{i=1}^k\int_{\mathbb{R}}|\lambda_i|^n \sum_{m_i\in\mathbb{N}^n} |\lambda_i|^{-2\alpha}(2|m_i|+n)^{-2\alpha}e^{-8(s_{i+1}-s_{i})|\lambda_i|(2|m_i|+n)}d\lambda_i,\label{Chaos Fourier mode}
\end{multline}
which is exactly our claim \eqref{chaos bound}.
\end{proof}

\begin{remark}\label{rmk on alpha} At a heuristic level, one can see from \eqref{Chaos Fourier mode1} why the condition $\frac{n}{2}<\alpha<\frac{n+1}{2}$ pops out in order to solve equation \eqref{mild sol}. Indeed, the following holds true for the right hand-side of \eqref{Chaos Fourier mode1}:
\begin{enumerate}[wide, labelwidth=!, labelindent=0pt, label={(\roman*)}]
\setlength\itemsep{.1in}
\item In order for the integral with respect to $\lambda$ to be finite near $0$, one needs $n-2\alpha>-1$, that is $\alpha<\frac{n+1}{2}$.

\item In order for the integral with respect to $s$ first and then with respect to $\lambda$ to be finite, one needs $(n-1)-2\alpha<-1$, that is $\alpha>\frac{n}{2}$.

\item In order for the integral with respect to $s$ first and then with respect to $m$ to be finite, one needs $2\alpha+1>n$, that is $\alpha>\frac{n-1}{2}$.
\end{enumerate}
The above constrains can be combined to be $\frac{n}{2}<\alpha<\frac{n+1}{2}$, which  agrees with the condition in Theorem \ref{prop:ito-exist-unique}.
In the remainder of the section we will make those statements rigorous.
\end{remark}

With Remark \ref{rmk on alpha} in mind, let us assume that $\frac{n}{2}<\alpha<\frac{n+1}{2}$. Our next aim will be to get a proper upper bound on the right hand side of \eqref{Chaos Fourier mode1}. To this end,  we split the contribution of $d\lambda$ in the following way: fix $N\geq1$ and set
\begin{align}\label{def: C_N D_N}
D^+_N=\int_{|\lambda|\geq N} |\lambda|^{n-2\alpha-1}d\lambda, \quad\mathrm{and}\quad  D^-_N=\int_{|\lambda|<N}|\lambda|^{n-2\alpha}d\lambda.
\end{align}
By the assumption that $\alpha>\frac{n}{2}$, $D_N^+$ is finite for all positive $N$ and approaches to $0$ as $N\uparrow\infty$. Furthermore, the following two quantities are also finite whenever $\alpha>\frac{n}{2}$,
\begin{align}
\label{def: C_1 C_2} C_1=\sum_{m\in\mathbb{N}^n}(2|m|+n)^{-2\alpha}, \quad\mathrm{and}\quad C_2=\frac{1}{8}\sum_{m\in\mathbb{N}^n}(2|m|+n)^{-(2\alpha+1)}.
\end{align}
With this additional piece of notation in hand, we can now state our main technical lemma toward the evaluation of \eqref{Chaos Fourier mode1}.

\begin{lemma}\label{est: chaos Fourier mode}Let $\frac{n}{2}<\alpha<\frac{n+1}{2}$. For any $N>0$, let $D^+_N$, $D^-_N$  and $C_1$, $C_2$ be given in \eqref{def: C_N D_N}  and \eqref{def: C_1 C_2}. Consider the quantity $M_{n,k}$ defined by \eqref{Chaos Fourier mode1}. Then, we have
\begin{align}\label{final bound M_nk}
M_{n,k}\leq\sum_{j=1}^k{k\choose j}  \frac{(tC_1D^-_N)^{j}}{j!}\left({C_2D^+_N}\right)^{k-j}.
\end{align}
\end{lemma}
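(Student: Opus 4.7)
I would decompose each one-dimensional integral $\int_{\R}d\lambda_i$ appearing in~\eqref{Chaos Fourier mode1} into the low-frequency contribution $|\lambda_i|<N$ and the high-frequency contribution $|\lambda_i|\geq N$, thereby expanding $M_{n,k}$ into a $2^k$-fold sum in which the two regimes are controlled by complementary mechanisms. For $u\geq 0$, write
\[
F^{\pm}(u)=\int_{I^{\pm}}|\lambda|^n\sum_{m\in\N^n}|\lambda|^{-2\alpha}(2|m|+n)^{-2\alpha}e^{-8u|\lambda|(2|m|+n)}d\lambda,
\]
where $I^-=\{|\lambda|<N\}$ and $I^+=\{|\lambda|\geq N\}$, so that the inner sum/integral at index $i$ in~\eqref{Chaos Fourier mode1} decomposes as $F^-(s_{i+1}-s_i)+F^+(s_{i+1}-s_i)$. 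Two elementary bounds are the starting point: on $F^-$, the crude estimate $e^{-8u|\lambda|(2|m|+n)}\leq 1$ yields the uniform bound $F^-(u)\leq C_1D_N^-$; on $F^+$, where the exponential is harmless in $\lambda$ but needed in $u$, Fubini together with $\int_0^\infty e^{-cu}du=c^{-1}$ gives $\int_0^\infty F^+(u)du=C_2D_N^+$.

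I would then expand the product $\prod_{i=1}^k(F^-+F^+)(s_{i+1}-s_i)$ as a sum over subsets $J\subset\{1,\dots,k\}$ marking the indices where the high-frequency piece $F^+$ is kept. Replacing each of the $|J^c|$ low-frequency factors $F^-$ by its uniform upper bound $C_1D_N^-$ leads to
\[
M_{n,k}\leq \sum_{J\subset\{1,\dots,k\}}(C_1D_N^-)^{|J^c|}\int_{[0,t]^k_<}\prod_{i\in J}F^+(s_{i+1}-s_i)\,ds,
\]
and reduces the proof to a purely deterministic estimate of these simplex-time integrals.

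The main computational step, and the only genuinely non-trivial one, is the simplex inequality
\[
\int_{[0,t]^k_<}\prod_{i\in J}f_i(s_{i+1}-s_i)\,ds\leq \frac{t^{k-|J|}}{(k-|J|)!}\prod_{i\in J}\int_0^\infty f_i(u)\,du,
\]
valid for any non-negative $f_i$. I would establish it through the change of variables $u_i=s_{i+1}-s_i$ for $i=1,\dots,k$ (with the convention $s_{k+1}=t$), which has unit Jacobian and sends $[0,t]^k_<$ onto the open simplex $\{u_i>0:u_1+\cdots+u_k<t\}$. Integrating first over the $|J^c|$ idle variables $u_i$, $i\notin J$, produces a Dirichlet-type volume bounded by $t^{k-|J|}/(k-|J|)!$, and the remaining $|J|$ integrations over $u_i$, $i\in J$, are each freely bounded by $\int_0^\infty f_i(u)du$. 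Applying this with $f_i=F^+$ contributes $(C_1D_N^-)^{|J^c|}\frac{t^{k-|J|}}{(k-|J|)!}(C_2D_N^+)^{|J|}$ per subset $J$; regrouping by $j=|J^c|$ produces the binomial factor $\binom{k}{j}$ and yields~\eqref{final bound M_nk}. The key feature of this argument is the factorial gain $1/(k-|J|)!$ coming from the idle time variables: it is what ultimately compensates the combinatorics and ensures convergence of the chaos series through Proposition~\ref{general existence chaos}.
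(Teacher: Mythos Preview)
Your proposal is correct and follows essentially the same route as the paper: split each $\lambda_i$-integral at level $N$, expand the product over subsets, bound the low-frequency factors uniformly by $C_1D_N^-$, and integrate the high-frequency factors over $[0,\infty)$ after the change of variables $u_i=s_{i+1}-s_i$. The only cosmetic difference is that the paper performs the increment change of variables first and then uses the inclusion $S_{t,k}\subset S_t^I\times S_t^{I^c}$, whereas you package this step as a standalone ``simplex inequality''; the resulting bound is identical.
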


\begin{proof}
In the right hand-side of \eqref{Chaos Fourier mode1}, we start with an elementary change of variables $s_{i+1}-s_{i}=w_i$ for $1\leq i\leq k-1$, and $t-s_k=w_k$. Denoting $dw=dw_1dw_2\cdots dw_k$, we have
\begin{align}\label{M_nk 1}
M_{n,k}\leq \int_{S_{t,k}}dw\,\prod_{i=1}^k\int_{\mathbb{R}}|\lambda_i|^n \sum_{m_i\in\mathbb{N}^n} |\lambda_i|^{-2\alpha}(2|m_i|+n)^{-2\alpha}e^{-8w_i|\lambda_i|(2|m_i|+n)}d\lambda_i,
\end{align}
where the set $S_{t,k}$ is defined by 
\begin{align}\label{set S_tk}
S_{t,k}=\{(w_1,\dots,w_k)\in [0,\infty)^k: w_1+\cdots+w_k\leq t\}.
\end{align}
Our next elementary step is to linearize the integrals in the right hand-side of \eqref{M_nk 1}. We get
\begin{align*}
M_{n,k}\leq\sum_{m_i\in\mathbb{N}^n, \, i=1,\ldots,k}\int_{S_{t,k}}dw\int_{\mathbb{R}^k}\prod_{i=1}^k |\lambda_i|^{n-2\alpha}(2|m_i|+n)^{-2\alpha}e^{-8w_i|\lambda_i|(2|m_i|+n)}d\lambda_1\cdots d\lambda_k.
\end{align*}
Now recall that we have used a parameter $N\geq1$ in \eqref{def: C_N D_N}. We split the integrals with respect to $\lambda$ above according to this parameter. This yields
\begin{multline}\label{M_nk2}
M_{n,k}\leq\sum_{m_i\in\mathbb{N}^n, \, i=1,\ldots,k} \int_{S_{t,k}}dw\,\int_{\mathbb{R}^k} \prod_{i=1}^k 
\left( \mathbf{1}_{\{|\lambda_i|< N\}}+\mathbf{1}_{\{|\lambda_i|\geq N\}} \right)\\
\cdot |\lambda_i|^{n-2\alpha}(2|m_i|+n)^{-2\alpha}e^{-8w_i|\lambda_i|(2|m_i|+n)}d\lambda_1\cdots d\lambda_k.
\end{multline}
In order to handle the products in \eqref{M_nk2}, let us introduce an additional notation. Namely we denote by $I$ a generic subset of $\{1,\dots,k\}$ and $I^c=\{1,\dots,k\}\backslash I$ stands for its complement. We split $\R^k$ accordingly as $\R^k=\R^{|I|}\times\R^{|I^c|}$ for the integration with respect to $\lambda$, with related variables called $\lambda_I$ and $\lambda_{I^c}$. Then starting from \eqref{M_nk2} we get
\begin{align*}
M_{n,k}\leq\sum_{I\subset\{1,2,...,k\}}\sum_{m_i\in\mathbb{N}^n\atop i=1,...,k} \int_{S_{t,k}}Q_I(m,w)Q_{I^c}(m,w)dw,
\end{align*}
where we have set
\begin{eqnarray*}
Q_I(m,w)&=&
\int_{\mathbb{R}^{|I|}}\prod_{i\in I}\mathbf{1}_{\{|\lambda_i|< N\}} |\lambda_i|^{n-2\alpha}(2|m_i|+n)^{-2\alpha}e^{-8w_i|\lambda_i|(2|m_i|+n)}d\lambda_I. \\
Q_{I^c}(m,w)&=&
\int_{\R^{|I^c|}} \prod_{i\in I^c}\mathbf{1}_{\{|\lambda_i|\geq N\}} |\lambda_i|^{n-2\alpha}(2|m_i|+n)^{-2\alpha}e^{-8w_i|\lambda_i|(2|m_i|+n)}d\lambda_{I^c}.
\end{eqnarray*}
One can also factorize the integration with respect to $w$ in the following way: recalling the definition \eqref{set S_tk} of $S_{t,k}$, we trivially have $S_{t,n}\subset S_t^I\times S_t^{I^c},$ with $S_t^I=\{(w_i, i\in I): w_i\geq 0, \sum_{i\in I}w_{i}\leq t\}$ and $S_t^{I^c}$ defined similarly. Next bound the exponential terms  $e^{-8w_i|\lambda_i|(2|m_i|+n)}$ by 1 whenever $i\in I$. we end up with
\begin{align}\label{M_nk3}
M_{n,k}\leq\sum_{I\subset\{1,2,...,k\}}\sum_{m_i\in\mathbb{N}^n\atop i=1,...,k} \hat{Q}_{I}(m_I)\hat{Q}_{I^c}(m_{I^c}),
\end{align}
where we define $m_I=\{m_i; i\in I\}$, $m_{I^c}=\{m_i; i\in I^c\}$ and 
\begin{eqnarray}
\hat{Q}_I(m_I)&=&
\int_{S_t^I}\int_{\mathbb{R}^{|I|}}\prod_{i\in I}\mathbf{1}_{\{|\lambda_i|< N\}} |\lambda_i|^{n-2\alpha}(2|m_i|+n)^{-2\alpha}dw_Id\lambda_I,\label{def: hat Q_I} \\
\hat{Q}_{I^c}(m_{I^c})&=&
\int_{S_t^{I^c}}\int_{\mathbb{R}^{|I^c|}}\prod_{i\in I^c}\mathbf{1}_{\{|\lambda_i|\geq N\}} |\lambda_i|^{n-2\alpha}(2|m_i|+n)^{-2\alpha}e^{-8w_i|\lambda_i|(2|m_i|+n)}dw_{I^c}d\lambda_{I^c}.\label{def: hat Q_I^c}
\end{eqnarray}
Notice that each $m_i$ is an element of $\mathbb{N}^n$, and we will thus consider $m_I$ as an element of $\mathbb{N}^{n|I|}$. In the same way $m_{I^c}$ is understood as an element of $\mathbb{N}^{n|I^c|}$.
In addition, the expression \eqref{M_nk3} can be further factorized in order to get
\begin{align}\label{est: chaos mid step}
M_{n,k}\leq \sum_{I\subset\{1,2,...,k\}} \left(\sum_{m_I\in\mathbb{N}^{n|I|}}\hat{Q}_I(m_I)\right) \left(\sum_{m_{I^c}\in\mathbb{N}^{n|I^c|}}\hat{Q}_{I^c}(m_{I^c})\right).
\end{align}
We will now bound the two terms in the right hand-side of \eqref{est: chaos mid step} separately.

The term $\hat{Q}_I$ defined by \eqref{def: hat Q_I} is upper bounded as follows: we recall that $C_1$ is defined by \eqref{def: C_1 C_2}. Then since $\alpha<\frac{n+1}{2}$, we also resort to expression \eqref{def: C_N D_N} for $D_N^-$ plus an elementary integration over the simplex $S_t^I$, which yields 
\begin{align}\label{est: chaos small lambda}
\sum_{m_I\in\mathbb{N}^{n|I|}} \hat{Q}_I(m_I)\leq \frac{(tC_1D^-_N)^{|I|}}{|I|!}.
\end{align}
Notice that the factor $t^{|I|}/|I|!$ in \eqref{est: chaos small lambda} comes from the integral over $dw_I$.

As far as the term $\hat{Q}_{I^c}$ defined by \eqref{def: hat Q_I^c} is concerned, we simply invoke the fact that $S_t^{I^c}\subset [0,\infty]^{|I^c|}$ and we integrate the exponential terms $\exp\left(-8w_i|\lambda_i|(2|m_i|+n)\right)$ over $w_i$ in order to get
\begin{align*}
&\sum_{m_{I^c}\in\mathbb{N}^{n|I^c|}}\hat{Q}_{I^c}(m_{I^c})\\
\leq&\sum_{m_{I^c}\in\mathbb{N}^{n|I^c|}}\int_{[0,\infty]^{|I^c|}}\int_{\mathbb{R}^{|I^c|}}\prod_{i\in I^c}\mathbf{1}_{\{|\lambda_i|\geq N\}} |\lambda_i|^{n-2\alpha}(2|m_i|+n)^{-2\alpha}e^{-8w_i|\lambda_i|(2|m_i|+n)}dw_{I^c}d\lambda_{I^c}\\
=&\frac{1}{8}\sum_{m_{I^c}\in\mathbb{N}^{n|I^c|}}\int_{\mathbb{R}^{|I^c|}}\prod_{i\in I^c}\mathbf{1}_{\{|\lambda_i|\geq N\}} |\lambda_i|^{n-2\alpha}(2|m_i|+n)^{-2\alpha}\frac{1}{{|\lambda_i|(2|m_i|+n)}}d\lambda_{I^c}.
\end{align*}
Owing to the fact that $\alpha>\frac{n}{2}$ and having the definitions \eqref{def: C_N D_N}-\eqref{def: C_1 C_2} in mind, we get
\begin{align}
\sum_{m_{I^c}\in\mathbb{N}^{n|I^c|}}\hat{Q}_{I^c}(m_{I^c})\leq\left({C_2D^+_N}\right)^{|I^c|}.\label{est: chaos large lambda}
\end{align}
Substituting \eqref{est: chaos small lambda} and \eqref{est: chaos large lambda} into \eqref{est: chaos mid step}, we have thus obtained that
\begin{align*}
M_{n,k}\leq \sum_{I\subset\{1,...,k\}} \frac{(tC_1D^-_N)^{|I|}}{|I|!}\left({C_2D^+_N}\right)^{|I^c|}=\sum_{j=1}^k{k\choose j}  \frac{(tC_1D^-_N)^{j}}{j!}\left({C_2D^+_N}\right)^{k-j}.
\end{align*}
The proof of our claim \eqref{final bound M_nk} is thus completed.
\end{proof}

\subsubsection{Existence, uniqueness and moment estimates}\label{sec:existence-chaos-non-smooth} 
This section is devoted to prove our existence-uniqueness result thanks to chaos expansion. This is summarized in the theorem below. Notice that an advantage of the chaos expansion method is that it produces some necessary and sufficient condition on the parameter $\alpha$.

\begin{theorem}\label{thm:existence-uniqueness chaos}
Assume $\bW_\alpha$ is a Gaussian noise whose covariance function is given by~\eqref{eq-d}. Then if $\alpha\in (\frac{n}{2}, \frac{n+1}{2})$, equation \eqref{mild sol} admits a unique solution in the It\^{o}-Skorohod sense (as given in Proposition~\ref{general existence chaos}). Moreover provided $|u_0(x)|\geq\kappa$ for a constant $\kappa>0$, the condition $\alpha\in (\frac{n}{2}, \frac{n+1}{2})$ is also necessary in order to solve equation  \eqref{mild sol}.
\end{theorem}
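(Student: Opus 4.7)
My plan is to reduce the theorem to the convergence criterion \eqref{exist of sol-chaos expansion} from Proposition \ref{general existence chaos}: existence and uniqueness of an It\^{o}--Skorohod solution is equivalent to $\sum_{m\geq 0} m!\|f_m(\cdot,t,q)\|^2_{\mathcal{H}^{\otimes m}}<\infty$. I will split the argument into the sufficiency direction ($\alpha\in(n/2,(n+1)/2)$ implies existence) and the necessity direction, where I will show that $\|f_1\|^2_{\mathcal{H}}=\infty$ whenever $\alpha$ lies outside this interval.

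For sufficiency, the idea is to combine the chaos bound \eqref{chaos bound} with Lemma \ref{est: chaos Fourier mode} to get, for each $k\geq 1$,
\begin{align*}
k!\,\|f_k(\cdot,t,q)\|^2_{\mathcal{H}^{\otimes k}} \leq \|u_0\|^2_\infty \, C^k \sum_{j=0}^k \binom{k}{j}\frac{a^j}{j!}\, b^{k-j},
\end{align*}
where $C=2^{n-1}/\pi^{n+1}$, $a=tC_1 D_N^-$ and $b=C_2 D_N^+$, all finite for $\alpha$ in the stated range and any $N>0$. Summing over $k$ and interchanging the sums via $\ell=k-j$, I would recognise the inner series as a negative-binomial expansion $(1-bC)^{-(j+1)}$, producing the Bessel-type bound
\begin{align*}
\sum_{k\geq 0} k!\,\|f_k\|^2 \leq \frac{\|u_0\|^2_\infty}{1-bC}\sum_{j\geq 0}\frac{1}{(j!)^2}\left(\frac{aC}{1-bC}\right)^j,
\end{align*}
which is finite as soon as $bC<1$. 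Since $D_N^+\to 0$ as $N\to\infty$, I will choose $N$ large enough to enforce $bC<1$ and thereby close the sufficiency direction.

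For necessity, I will work only with the first chaos and exploit the assumption $u_0\geq\kappa>0$: because $p_s$ is a probability kernel on $\Heis$, we have $p_s u_0\geq\kappa$ pointwise. Expanding the Cameron--Martin norm of $f_1$ via the covariance representation \eqref{eq-d}, then invoking the self-adjointness of $(-\Delta)^{-\alpha}$ together with the left-translation invariance established in \eqref{remove effect of shift}, I obtain
\begin{align*}
\|f_1(\cdot,t,q)\|^2_{\mathcal{H}} \geq \kappa^2\int_0^t \|(-\Delta)^{-\alpha}p_w\|^2_{L^2(\Heis)}\,dw.
\end{align*}
Passing to Fourier variables through \eqref{proj Plancherel}, \eqref{Fourier transform Laplace} and \eqref{HeatKernelFourier} reduces the right-hand side to a positive constant times
\begin{align*}
\sum_{m\in\N^n}(2|m|+n)^{-2\alpha-1}\int_{\R} |\lambda|^{n-2\alpha-1}\bigl(1-e^{-8t|\lambda|(2|m|+n)}\bigr)\,d\lambda.
\end{align*}
An elementary case analysis of this expression then forces $\alpha>n/2$ (for $|\lambda|^{n-2\alpha-1}$ to be integrable at infinity) and $\alpha<(n+1)/2$ (so that the Taylor expansion $1-e^{-\cdots}\sim 8t|\lambda|(2|m|+n)$ near $\lambda=0$ yields an integrable singularity). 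If either condition fails, $\|f_1\|^2_{\mathcal{H}}=\infty$ and \eqref{exist of sol-chaos expansion} cannot hold.

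The main obstacle I anticipate is twofold: justifying the Tonelli interchange of the double sum over $(k,j)$ in the sufficiency step (handled automatically by nonnegativity of every term), and making sure the lower bound from the single chaos $f_1$ is already sharp enough to capture both endpoints of the interval $(n/2,(n+1)/2)$; the Fourier computation sketched above confirms this sharpness, so no higher-chaos lower bound is needed.
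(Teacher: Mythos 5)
Your proposal is correct and follows essentially the same route as the paper: sufficiency by combining \eqref{chaos bound} with Lemma \ref{est: chaos Fourier mode}, interchanging the sums over $k$ and $j$, and taking $N$ large so that $C_2D_N^+$ is small (the paper uses the cruder bound $\binom{k}{j}\le 2^k$ where you sum the negative-binomial series exactly; note the inner sum actually yields $1/j!$ rather than $1/(j!)^2$, a harmless slip since the series converges either way). Your necessity argument, lower-bounding the first chaos by $\kappa^2\int_0^t\|(-\Delta)^{-\alpha}p_w\|^2_{L^2(\Heis)}\,dw$ and analyzing the resulting $\lambda$-integral near $0$ and at infinity, is exactly the paper's $M_{n,1}$ computation made explicit.
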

\begin{proof}
We divide this proof into sufficient and necessary conditions.

\noindent
\textit{Step 1: Sufficient condition.}
Let us assume that $\alpha\in (\frac{n}{2}, \frac{n+1}{2})$.
As stated in Proposition~\ref{general existence chaos}, we just need to check inequality~\eqref{exist of sol-chaos expansion} in order to get existence and uniqueness. Now gathering~\eqref{final bound M_nk} and~\eqref{chaos bound}, we get
\begin{equation}
\|f_k(\cdot,t,q)\|_{\mathcal{H}^{\otimes k}}^2
\leq \frac{C_0^k}{k!}\sum_{j=1}^k{k\choose j}  \frac{(tC_1D^-_N)^{j}}{j!}\left({C_2D^+_N}\right)^{k-j},
\end{equation}
where we have set $C_0={2^{n-1}}{\pi^{-(n+1)}}$. Summing over $k$ and using the elementary inequality  ${k\choose j}\leq 2^k$ we obtain
\begin{align}
\sum_{k=0}^\infty k!\|f_k(\cdot,t,q)\|_{\mathcal{H}^{\otimes k}}^2&\leq \|u_0\|^2_\infty \sum_{k=0}^\infty C_0^k\sum_{j=1}^k{k\choose j}  \frac{(tC_1D^-_N)^{j}}{j!}\left({C_2D^+_N}\right)^{k-j}\nonumber\\
&\leq \|u_0\|^2_\infty \sum_{k=0}^\infty \sum_{j=1}^k (2C_0)^k  \frac{(tC_1D^-_N)^{j}}{j!}\left({C_2D^+_N}\right)^{k-j}\nonumber\\
&= \|u_0\|^2_\infty  \sum_{j=1}^\infty   \frac{(tC_1D^-_N)^{j}}{j!} \left({C_2D^+_N}\right)^{-j} \sum_{k=j}^\infty (2C_0)^k  \left({C_2D^+_N}\right)^{k}.\label{eq: final est mid step}
\end{align}
Now we can choose $N$ large enough to ensure the summation over $k$ in \eqref{eq: final est mid step} converges and equals to
$\frac{(2C_0C_2D^+_N)^j}{1-2C_0C_2D^+_N}$ under our standing assumption on $\alpha$.
Plugging this into \eqref{eq: final est mid step} gives us
\begin{align}\label{existence last step bound}
\sum_{k=0}^\infty k!\|f_k(\cdot,t,q)\|_{\mathcal{H}^{\otimes k}}^2\leq  \frac{ \|u_0\|^2_\infty }{1-2C_0C_2D^+_N}\sum_{j=1}^\infty   \frac{(tC_1D^-_N)^{j}}{j!} \left({C_2D^+_N}\right)^{-j} (2C_0C_2D^+_N)^j<\infty.
\end{align}
This proves \eqref{exist of sol-chaos expansion} and hence \eqref{mild sol} admits a unique solution. 

\noindent
\textit{Step 2: Necessary condition.} If the initial condition $u_0$ is such that $u_0(x)\geq\kappa>0$, then according to \eqref{chaos bound} and \eqref{Chaos Fourier mode1} a necessary condition in order to solve our heat equation \eqref{eq:pam} is that $M_{n,1}<\infty$. Moreover, after some elementary simplifications we discover that
$$M_{n,1}=2\sum_{m\in\mathbb{N}^n}(2|m|+n)^{-(2\alpha+1)}\int_0^\infty\lambda^{n-2\alpha-1}\left(1-e^{-8\lambda(2|m|+n)}\right)d\lambda.$$
Now considering the term $m=0_{\mathbb{N}^n}$ only we get
\[M_{n,1}\gtrsim\int_0^\infty\lambda^{n-2\alpha-1}\left(1-e^{-8n\lambda}\right)d\lambda.\]
Then we divide $[0,\infty)$ in to $[0,1)\cup[1,\infty)$ and use the bound $1-e^{-x}\gtrsim x$ on $[0,1)$ and $1-e^{-x}\leq 1$ on $[1,\infty)$. This yields
\begin{align}
M_{n,1}\gtrsim\int_0^1\lambda^{n-2\alpha}d\lambda+\int_1^\infty \lambda^{n-2\alpha-1}d\lambda,\label{necessity}.
\end{align}
From the right hand-side of \eqref{necessity}, it is readily checked that the condition $\alpha\in(\frac{n}{2},\frac{n+1}{2})$ is necessary for the finiteness of $M_{n,1}$. 

Step 1 and Step 2 complete the proof of existence and uniqueness of solution to equation~\eqref{eq:pam} by the method of chaos expansion.
\end{proof}

We now state a rough exponential estimate for the $L^2$-moments of $u(t,q)$. This non optimal bound illustrates how useful the chaos approach can be in our context. 

\begin{proposition}\label{thm: exponential L^2 bound}
As in Theorem \ref{thm:existence-uniqueness chaos}, we consider the noise $\bW_\alpha$ with $\alpha\in(\frac{n}{2},\frac{n+1}{2})$. Let $u$ be the unique solution to equation \eqref{mild sol}. Then there exist $c_1,..., c_4>0$ such that for all $(t,q)\in\R_+\times\Heis$ we have
\begin{align}\label{exponential L^2 bound}
c_1e^{c_2t}\leq \E\left[(u(t,q))^2\right]\leq c_3e^{c_4t}.
\end{align}
\end{proposition}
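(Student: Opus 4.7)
By orthogonality of Wiener chaoses we have $\E[u(t,q)^2] = \sum_{k \geq 0} k! \|f_k(\cdot, t, q)\|_{\mathcal{H}^{\otimes k}}^2$, and this is precisely the series estimated in~\eqref{existence last step bound} from the proof of Theorem~\ref{thm:existence-uniqueness chaos}. Tracking the $t$-dependence therein and using the simplification $(C_2 D_N^+)^{-j}(2C_0 C_2 D_N^+)^j = (2C_0)^j$, the sum over $j$ on the right hand side of~\eqref{existence last step bound} reads $\sum_{j \geq 1}(2tC_0 C_1 D_N^-)^j/j! \leq e^{2tC_0 C_1 D_N^-}$. This yields $\E[u(t,q)^2] \leq c_3 e^{c_4 t}$ with $c_3 = \|u_0\|_\infty^2/(1-2C_0 C_2 D_N^+)$ and $c_4 = 2C_0 C_1 D_N^-$, once $N$ is chosen large enough that $2C_0 C_2 D_N^+ < 1$. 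So the upper bound comes essentially for free from the existence proof.

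\textbf{Lower bound via Feynman--Kac.} For the lower bound, the plan is to invoke the Feynman--Kac moment identity
\[
\E[u(t,q)^2] = \E\!\left[u_0(\BH^1_t)\, u_0(\BH^2_t) \exp\!\left(\int_0^t G_{2\alpha}(\BH^1_s, \BH^2_s)\, ds\right)\right],
\]
where $\BH^1, \BH^2$ are independent copies of Brownian motion on $\Heis$ starting at $q$. This identity can be derived by matching chaos expansions on both sides: on the left, the formula~\eqref{f_n explicit} for $f_k$ and the computation of $k!\|f_k\|^2$ produce an iterated integral over a time simplex which, after unsymmetrizing and iteratively applying Chapman--Kolmogorov for $p_t$, is exactly the $k$-th Taylor coefficient of the exponential averaged against the joint law of $(\BH^1, \BH^2)$. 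Using $u_0 \geq \kappa > 0$ then gives $\E[u(t,q)^2] \geq \kappa^2\, \E\!\left[\exp\!\left(\int_0^t G_{2\alpha}(\BH^1_s, \BH^2_s)\, ds\right)\right]$.

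\textbf{Small ball argument and conclusion.} Fix a parameter $\rho > 0$ to be chosen later and let $A_\rho = \{\sup_{s \leq t} d_{cc}(q, \BH^i_s) \leq \rho,\; i=1,2\}$. On $A_\rho$ the triangle inequality gives $d_{cc}(\BH^1_s, \BH^2_s) \leq 2\rho$, and the kernel lower bound~\eqref{eq-kernel-cc} yields $G_{2\alpha}(\BH^1_s, \BH^2_s) \geq c\,\rho^{4\alpha - 2(n+1)}$, hence $\int_0^t G_{2\alpha}\, ds \geq c\, t\, \rho^{4\alpha - 2(n+1)}$ on $A_\rho$. Standard small ball estimates for the sub-Riemannian Brownian motion on $\Heis$, obtained from the Gaussian heat kernel bounds~\eqref{eq-heat-kernel-bds} and the left-invariance~\eqref{eq-kernel-transl}, provide $\PP(A_\rho) \geq c_0\, e^{-C_0 t/\rho^2}$ uniformly in $t, \rho > 0$. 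Combining these ingredients,
\[
\E[u(t,q)^2] \geq \kappa^2 c_0 \exp\!\left( t \bigl( c \rho^{4\alpha - 2(n+1)} - C_0 \rho^{-2} \bigr) \right).
\]
Since $\alpha > n/2$ forces $4\alpha - 2n > 0$, the ratio $\rho^{4\alpha - 2(n+1)}/\rho^{-2} = \rho^{4\alpha - 2n}$ diverges as $\rho \to \infty$, so fixing $\rho$ as a sufficiently large constant makes the bracket in the exponent strictly positive and delivers the lower bound with $c_1 = \kappa^2 c_0$. The main technical obstacles are the rigorous justification of the Feynman--Kac identity in the It\^o--Skorokhod framework on $\Heis$ and the precise form of the small ball estimate for the hypoelliptic Brownian motion; both should however follow from routine adaptations of arguments available in~\cite{HN09, BG, Li07}.
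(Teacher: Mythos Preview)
Your proposal is correct and follows essentially the same route as the paper: the upper bound is read off from the chaos estimate~\eqref{existence last step bound}, and the lower bound comes from the Feynman--Kac moment formula combined with a small ball estimate for the sub-Riemannian Brownian motion, optimized over the confinement radius. The only substantive difference is that the paper isolates the small ball bound $\PP(A_{\epsilon,t})\ge c\exp(-Ct/\epsilon^2)$ as a separate lemma, proving it via the scaling property of $\cb$ and a comparison with a $(2n+3)$-dimensional Bessel process (from~\cite{BGNT}) rather than directly from the heat kernel bounds; otherwise your confinement event and exponent balancing match the paper's argument.
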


Before proving Proposition \ref{thm: exponential L^2 bound}, we need to state a couple of preliminary results on the Brownian motion $\cb$ on $\Heis$. 
We first label a basic scaling result for this process.

\begin{proposition}\label{prop:scaling-bm}
Let $\mathcal{B}$ be the Brownian motion on $\Heis$ introduced in Section \ref{intro BM on Heis}, and recall that the distance $d_{cc}$ is given by \eqref{eq-cc-dist}. We assume that $\mathcal{B}_0=e$ almost surely. Then for $t>0$ the following scaling property holds true:
\begin{equation}
d_{cc}(\mathcal{B}_t, e)\stackrel{(\mathcal{D})}{=}\sqrt{t} \, d_{cc}(\mathcal{B}_1,e)\label{scaling property BM}.
\end{equation}
\end{proposition}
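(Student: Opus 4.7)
The strategy is to derive the identity from the well-known self-similarity of Brownian motion on $\Heis$ under the dilations $\delta_\lambda$ defined in \eqref{eq-dil-delta}, combined with the homogeneity of the Carnot-Carath\'eodory distance $d_{cc}$ with respect to those same dilations.

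First I would establish the scaling identity
\begin{equation*}
\{\mathcal{B}_{ts}\}_{s\ge 0}\stackrel{(\mathcal{D})}{=}\{\delta_{\sqrt{t}}\,\mathcal{B}_s\}_{s\ge 0},
\end{equation*}
by working directly from the explicit representation \eqref{eq-BM}. Using the Brownian scaling $(B_{ts},\beta_{ts})\stackrel{(\mathcal{D})}{=}(\sqrt{t}\,B_s,\sqrt{t}\,\beta_s)$ jointly in $s$, and injecting this into the pathwise L\'evy-area formula $A_{ts}=2\sum_i\int_0^{ts}B^i_u\,d\beta^i_u-\beta^i_u\,dB^i_u$, a change of variables $u=tv$ together with the scaling of stochastic integrals yields $A_{ts}\stackrel{(\mathcal{D})}{=}t\,A_s$. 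Combining the three coordinates gives $\mathcal{B}_{ts}\stackrel{(\mathcal{D})}{=}(\sqrt{t}\,B_s,\sqrt{t}\,\beta_s,t\,A_s)=\delta_{\sqrt{t}}\mathcal{B}_s$ in the sense of \eqref{eq-dil-delta}. In particular, taking $s=1$, we obtain $\mathcal{B}_t\stackrel{(\mathcal{D})}{=}\delta_{\sqrt{t}}\mathcal{B}_1$.

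Next I would record the homogeneity of the CC-distance: for every $\lambda>0$ and $q\in\Heis$,
\begin{equation*}
d_{cc}(\delta_\lambda q,e)=\lambda\,d_{cc}(q,e).
\end{equation*}
This follows from the definition \eqref{eq-cc-dist}: if $\gamma:[0,1]\to\Heis$ is a horizontal path joining $e$ to $q$, then $\delta_\lambda\circ\gamma$ is horizontal (since the dilations preserve the horizontal bundle $\mathcal{D}$, acting on $X_i,Y_i$ by a factor $\lambda$), joins $e$ to $\delta_\lambda q$, and has sub-Riemannian length $\lambda\int_0^1|\dot\gamma(t)|_{\mathcal{H}}\,dt$. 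Infimizing in both directions gives the equality.

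Finally, combining the two items: since the map $q\mapsto d_{cc}(q,e)$ is continuous and measurable, the identity in law $\mathcal{B}_t\stackrel{(\mathcal{D})}{=}\delta_{\sqrt{t}}\mathcal{B}_1$ transfers to
\begin{equation*}
d_{cc}(\mathcal{B}_t,e)\stackrel{(\mathcal{D})}{=}d_{cc}(\delta_{\sqrt{t}}\mathcal{B}_1,e)=\sqrt{t}\,d_{cc}(\mathcal{B}_1,e),
\end{equation*}
which is exactly \eqref{scaling property BM}. None of the steps present a real obstacle; the only point requiring a little care is the joint-in-$s$ scaling of the L\'evy area, which is where the exponent $t$ (rather than $\sqrt{t}$) on the $z$-coordinate enters and matches the exponent in $\delta_{\sqrt{t}}$.
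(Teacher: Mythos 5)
Your proof is correct and follows essentially the same route as the paper: establish $\mathcal{B}_t\stackrel{(\mathcal{D})}{=}\delta_{\sqrt{t}}\,\mathcal{B}_1$ and then use the homogeneity $d_{cc}(\delta_\lambda q,e)=\lambda\,d_{cc}(q,e)$. The only difference is that the paper cites the literature for the dilation scaling of $\mathcal{B}$, whereas you derive it directly from the explicit representation \eqref{eq-BM} via Brownian scaling of $(B,\beta)$ and the L\'evy area; both steps are carried out correctly.
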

\begin{proof}
For the dilation $\delta_{\la}$ given in~\eqref{eq-dil-delta}, it is shown in \cite[pp. 36-37]{Ba} that $\cb_{t}\stackrel{(\mathcal{D})}{=}\delta_{\sqrt{t}}\,\cb_{1}$. Hence the result follows from the fact that $d_{cc}(\delta_{\sqrt{t}}\,\mathcal{B}_1,e)=\sqrt{t} \, d_{cc}(\mathcal{B}_1,e)$.
\end{proof}

We also label an estimate about small ball probabilities for the Brownian motion on $\Heis$, which will be crucial for our lower bound on $L^{2}$-moments of the stochastic heat equation.

\begin{lemma} Let $\mathcal{B}$ be the Brownian motion on $\Heis$ defined in Section \ref{intro BM on Heis}. We also consider an independent copy $\tilde{\mathcal{B}}$ of $\mathcal{B}$. For $\epsilon, t>0$ such that $\epsilon\ll \sqrt{t}$, we define an event 
\begin{align}\label{A_ep,t}
A_{\epsilon,t}=\left\{\sup_{0\leq s\leq t}d_{cc}(\mathcal{B}_s,\tilde{\mathcal{B}}_s)\leq \epsilon\right\}.
\end{align}
Then there exist two constants $c, C>0$ such that
\begin{align}\label{small ball proba}
\mathbb{P}(A_{\epsilon,t}) \ge c \exp\left\{-\frac{Ct}{\epsilon^2}\right\}.
\end{align}
\end{lemma}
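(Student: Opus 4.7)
The plan is to reduce the joint small ball probability to a single-process small ball estimate via independence, and then to combine the scaling of $\cb$ with an iteration of the Markov property on short time scales.

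By the triangle inequality, if both $\cb_s$ and $\tilde\cb_s$ remain in $B(e,\epsilon/2)$ throughout $[0,t]$, then $d_{cc}(\cb_s,\tilde\cb_s)\le\epsilon$, and the independence of $\cb$ and $\tilde\cb$ yields
\[
\mathbb{P}(A_{\epsilon,t}) \;\ge\; \mathbb{P}\Bigl(\sup_{0\le s\le t} d_{cc}(\cb_s, e) \le \epsilon/2\Bigr)^{2}.
\]
Next, I would invoke the path-level version of the scaling identity $(\cb_{\lambda u})_{u\ge 0} \stackrel{(\mathcal{D})}{=} (\delta_{\sqrt{\lambda}} \cb_u)_{u\ge 0}$ (which follows from the same argument as Proposition~\ref{prop:scaling-bm}) with $\lambda=t$. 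Combined with the left-invariance of $d_{cc}$, this gives $\mathbb{P}(\sup_{s\le t} d_{cc}(\cb_s,e)\le \epsilon/2)=\mathbb{P}(\sup_{u\le 1} d_{cc}(\cb_u,e)\le \delta)$ with $\delta:=\epsilon/(2\sqrt{t})\ll 1$. So the problem reduces to establishing
\[
\mathbb{P}\Bigl(\sup_{0\le u\le 1} d_{cc}(\cb_u, e) \le \delta\Bigr) \;\ge\; c\,\exp\bigl(-C/\delta^{2}\bigr).
\]

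To prove this small ball bound I would partition $[0,1]$ into $N=\lceil \delta^{-2}\rceil$ subintervals $[t_{k-1},t_k]$ of length at most $\delta^{2}$, and introduce the events
\[
F_k = \Bigl\{\,d_{cc}(\cb_{t_k},e)\le \delta/4\ \text{ and }\ \sup_{t_{k-1}\le s\le t_k} d_{cc}(\cb_s,\cb_{t_{k-1}})\le \delta/2\,\Bigr\},\qquad k=1,\dots,N.
\]
The triangle inequality gives $\bigcap_k F_k \subset \{\sup_u d_{cc}(\cb_u,e)\le \delta\}$. Applying the Markov property and the left-invariance of $\cb$, the probability of $F_k$ conditional on $\cb_{t_{k-1}}=x$ with $d_{cc}(x,e)\le \delta/4$ (which holds on $F_{k-1}$) equals $\mathbb{P}\bigl(d_{cc}(\cb_{\delta^2},x^{-1})\le \delta/4,\ \sup_{s\le \delta^2}d_{cc}(\cb_s,e)\le \delta/2\bigr)$. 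Rescaling via $\delta_{1/\delta}$ and writing $y=\delta_{1/\delta}(x)\in\overline{B(e,1/4)}$, this probability equals
\[
p(y)\;:=\;\mathbb{P}\Bigl(d_{cc}(\cb_1,y^{-1})\le 1/4\ \text{ and }\ \sup_{0\le u\le 1}d_{cc}(\cb_u,e)\le 1/2\Bigr),
\]
which no longer depends on $\delta$.

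The main obstacle is to verify that $p_0:=\inf_{y\in \overline{B(e,1/4)}} p(y)>0$. I would deduce this from a support theorem argument: by Chow's theorem one may connect $e$ to $y^{-1}$ by a horizontal path entirely contained in $B(e,1/2)$, and the support theorem for the horizontal Brownian motion ensures that $\cb$ stays uniformly close to such a path with positive probability; continuity of the Wiener measure in the endpoint $y$, together with the compactness of $\overline{B(e,1/4)}$, yields the uniform lower bound $p_0>0$. (Alternatively, one may combine the Gaussian heat kernel bounds \eqref{eq-heat-kernel-bds} with standard Dirichlet heat kernel estimates on sub-Riemannian balls.) Once $p_0>0$ is available, iterating the Markov property over the $N$ subintervals produces $\mathbb{P}\bigl(\bigcap_k F_k\bigr) \ge p_0^{N}\ge \exp\bigl(-\log(1/p_0)/\delta^{2}\bigr)$, and substituting $\delta^{-2}=4t/\epsilon^{2}$ then gives \eqref{small ball proba}.
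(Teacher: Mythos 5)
Your proposal is correct in substance, and after the first step it takes a genuinely different route from the paper. Both arguments begin identically: independence plus the triangle inequality reduce \eqref{small ball proba} to the one-process bound $\mathbb{P}\bigl(\sup_{0\le s\le t} d_{cc}(\mathcal{B}_s,e)\le \epsilon/2\bigr)^2$, and scaling converts this to the exit-time statement $\mathbb{P}(T_{1}\ge 4t/\epsilon^2)$ for the unit Carnot--Carath\'eodory ball. At that point the paper invokes a comparison theorem from \cite{BGNT}, which stochastically dominates $T_1$ from below by the hitting time of $1$ for a Bessel process of dimension $2n+3$ started at $0$, and then concludes from classical one-dimensional estimates; this is short and yields the sharp rate $-4\lambda_1/\epsilon^2$ recorded in the remark following the lemma, with $\lambda_1$ the first Dirichlet eigenvalue of the unit ball in $\R^{2n+3}$. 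You instead run a self-contained chaining argument: partition $[0,1]$ into $O(\delta^{-2})$ blocks of length $\delta^2$, use the Markov property, left-invariance and dilation to reduce each block to a fixed probability $p(y)$, and establish $\inf_y p(y)>0$ via Chow's theorem and the support theorem for the hypoelliptic diffusion. This buys robustness (no radial comparison theorem is needed, so the scheme ports to settings where such a result is unavailable) at the price of an unspecified constant $C=O(\log(1/p_0))$ and of having to justify the uniform positivity $p_0>0$, which is standard but not free.

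Two small points to tidy up, neither of which is a genuine gap. First, the displayed inequality $p_0^N\ge \exp(-\log(1/p_0)/\delta^2)$ is backwards as written, since $N=\lceil\delta^{-2}\rceil\ge \delta^{-2}$; you should instead bound $N\le 2\delta^{-2}$ (valid for $\delta\ll 1$) and absorb the factor $2$ into $C$. Second, for the uniformity of $p_0$ over $\overline{B(e,1/4)}$ you should say a word about why $y\mapsto p(y)$ is continuous (e.g.\ because $\mathcal{B}_1$ has a density and CC-spheres are $\mu$-null, so the probability of the boundary event $\{d_{cc}(\mathcal{B}_1,y^{-1})=1/4\}$ vanishes), or replace continuity by a covering argument with strict inequalities and lower semicontinuity.
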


\begin{proof} We assume (without loss of generality) that $\mathcal{B}_0=\tilde{\mathcal{B}}_0=e$. Then it is easily seen that
\begin{align*}
\left\{\sup_{0\leq s\leq t}d_{cc}(\mathcal{B}_s,\tilde{\mathcal{B}}_s)\leq \epsilon \right\}\supset\left\{ \sup_{0\leq s\leq t}d_{cc}(\mathcal{B}_s,x)\leq \frac{\epsilon}{2} \right\} \cap \left\{ \sup_{0\leq s\leq t}d_{cc}(x,\tilde{\mathcal{B}}_s)\leq \frac{\epsilon}{2}\right\}.
\end{align*}
Therefore invoking the independence of $\mathcal{B}$ and $\tilde{\mathcal{B}}$ we obtain
\begin{align}
\mathbb P \left(\sup_{0\leq s\leq t}d_{cc}(\mathcal{B}_s,\tilde{\mathcal{B}}_s)\leq \epsilon \right) 
&  \ge 
\mathbb P \left(\left\{ \sup_{0\leq s\leq t}d_{cc}(\mathcal{B}_s,e)\leq \frac{\epsilon}{2} \right\} 
\cap 
\left\{ \sup_{0\leq s\leq t}d_{cc}(e,\tilde{\mathcal{B}}_s)\leq \frac{\epsilon}{2}\right\}\right) \notag\\
 &\ge \mathbb P \left( \sup_{0\leq s\leq t}d_{cc}(\mathcal{B}_s,e)\leq \frac{\epsilon}{2} \right)^2.\label{lower proba bound}
\end{align}
We now bound the right hand-side of \eqref{lower proba bound}. To this aim, for $r>0$, let us denote by $T_{r}$ the hitting time by $\mathcal{B}$ of the Carnot-Carath\'eodory sphere with center $x$ and radius $r$. Using the scaling property for the process $s\mapsto d_{cc}(\mathcal{B}_s,e)$ stated in Proposition~\ref{prop:scaling-bm}, we can then write
\[
 \mathbb P \left( \sup_{0\leq s\leq t}d_{cc}(\mathcal{B}_s,x)\leq \frac{\epsilon}{2} \right) \ge \mathbb{P} \left( T_{\epsilon /2} \ge t \right)=\mathbb{P} \left( T_{1} \ge \frac{4t}{\varepsilon^2} \right).
\]
Moreover, according to Theorem 5.2 (see also Corollary 5.4) in  \cite{BGNT}, one has
\[
\mathbb{P} \left( T_{1} \ge \frac{4t}{\varepsilon^2} \right) \ge \mathbb{P} \left( \tau_{1} \ge \frac{4t}{\varepsilon^2} \right), 
\]
where $\tau_{1}$ is the hitting time of 1 by a Bessel process of dimension $2n+3$ started from zero. The result follows then from classical estimates.
\end{proof}

\begin{remark}
In fact the proof above also yields a more precise estimate. Namely for every $\epsilon >0$ we have
\[
\liminf_{t \to +\infty} \frac{1}{t}  \ln  \mathbb{P}(A_{\epsilon,t}) \ge -4 \frac{\lambda_1}{\epsilon^2},
\]
where $\lambda_1$ is the first Dirichlet eigenvalue of the unit ball in $\mathbb R^{2n+3}$.
\end{remark}

We now turn to the proof of our estimate for $L^2$-moments of $u(t,q)$. It relies on a Feynman-Kac representation of moments which will be mostly detailed in section \ref{sec-smooth} (the Feynman-Kac representation being crucial in the smooth noise case).

\begin{proof}[Proof of Proposition \ref{thm: exponential L^2 bound}]
The upper bound in \eqref{exponential L^2 bound} is an easy consequence of \eqref{existence last step bound}. We thus focus on the lower bound. Towards this aim, we start by stating a Feynman-Kac representation for the $L^2$-moments. This claim will be detailed below in the proof of Theorem~ \ref{thm:existence-smooth-noise}. Here we content ourselves with asserting that
\begin{align}\label{eq-L-2-mom}
\mathbf{E}\left[u_t(x)^2\right]
= \mathbb{E} \lc \exp\left\{c\int_0^tG_{2\alpha}(\mathcal{B}_s,\tilde{\mathcal{B}}_s)ds\right\}\rc,
\end{align}
where $\mathcal{B}, \tilde{\mathcal{B}}$ are two independent Brownian motions on $\Heis$ starting from $e$, and where $\mathbb{E}$ designates the expected value with respect to $\mathcal{B}$, $\tilde{\mathcal{B}}$ only. Now for $\epsilon, t>0$ consider the event $A_{\epsilon,t}$ defined by \eqref{A_ep,t}. Due to the positivity of the exponential function we get
\begin{align*}
\E\left[(u(t,q))^2\right]\gtrsim \mathbb{E} \left[\exp\left\{c\int_0^tG_{2\alpha}(\mathcal{B}_s,\tilde{\mathcal{B}}_s)ds\right\}\mathbf{1}_{A_{\epsilon,t}}\right].
\end{align*}
Moreover, thanks to the definition of $A_{\epsilon,t}$ and resorting to \eqref{eq-kernel-cc} in order to bound $G_{2\alpha}$ we obtain 
\begin{align*}
\E\left[(u(t,q))^2\right]\gtrsim \exp\left\{\frac{ct}{\epsilon^{2(n+1-2\alpha)}}\right\}\mathbb{P}(A_{\epsilon,t}).
\end{align*}
Invoking relation \eqref{small ball proba} for $\epsilon\ll\sqrt{t}$, this yields
\begin{align}\label{picking ep large 1}
\mathbf{E}[u_t(x)^2]\gtrsim\exp\left\{\left(\frac{c}{\epsilon^{2(n+1-2\alpha)}}-\frac{C}{\epsilon^2}\right)t\right\}.
\end{align}
Note that whenever $\alpha<\frac{n+1}{2}$, we have $2(n+1-2\alpha)<2$. Hence if $t$ is large enough, one can find $\epsilon$ large enough but still with $\epsilon\ll\sqrt{t}$ such that
\begin{align}\label{picking ep large 2}
\frac{c}{\epsilon^{2(n+1-2\alpha)}}-\frac{C}{\epsilon^2}\geq \frac{c}{2\epsilon^{2(n+1-2\alpha)}}\,.
\end{align}
Plugging \eqref{picking ep large 2} into \eqref{picking ep large 1}, the lower bound in \eqref{exponential L^2 bound} is proved.
\end{proof}

\subsubsection{Smooth noise regime}\label{sec-smooth} We now turn to the analysis of the mild equation \eqref{mild sol} with chaos expansions, in the smooth case $\alpha>\frac{n+1}{2}$. Notice that this case is not included in Theorem \ref{prop:ito-exist-unique} above. Generally speaking developments for SPDEs driven by function-valued noises are scarce in the literature. Moreover covariances for noises of the form \eqref{eq-W-ptwise} are growing polynomially at $\infty$, which induces some additional technical difficulty in the analysis of stochastic convolutions like \eqref{stochastic integral}. Below we will see how to solve this obstacle thanks to a Feynman-Kac representation of moments.

\begin{theorem}\label{thm:existence-smooth-noise}
Consider the Gaussian noise $\bW_\alpha$ for $\frac{n+1}{2}<\alpha<\frac{n+2}{2}$, as introduced in Proposition \ref{prop-W-pt}. Then equation \eqref{mild sol}        
interpreted in the It\^{o}-Skorohod sense admits a unique solution.
\end{theorem}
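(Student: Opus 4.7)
The strategy is to extend the chaos expansion approach of Section \ref{sec:existence-chaos-non-smooth} by combining it with a Feynman-Kac representation adapted to the function-valued covariance structure. Invoking Proposition \ref{general existence chaos}, existence and uniqueness amount to verifying the summability condition $\sum_{k\ge 0}k!\|f_k(\cdot,t,q)\|_{\mathcal{H}^{\otimes k}}^{2}<\infty$, where $f_k$ is still given by the explicit formula \eqref{f_n explicit}. The novelty of the regime $\al\in(\frac{n+1}{2},\frac{n+2}{2})$ lies in the Cameron-Martin space of the noise: reading off the definition \eqref{eq-W-ptwise}, $\mathcal{H}$ takes the form $L^{2}(\R_{+};\mathcal{H}_{\al}^{\mathrm{sp}})$, where the spatial Hilbert space $\mathcal{H}_{\al}^{\mathrm{sp}}$ is the completion of $\mathcal{C}_{c}^{\infty}(\Heis)$ under
\begin{align*}
\langle\varphi,\psi\rangle_{\mathrm{sp}}=\int_{\Heis\times\Heis}\varphi(x)\psi(y)K_{\al}(x,y)d\mu(x)d\mu(y),
\end{align*}
and where the spatial covariance kernel is
\begin{align*}
K_{\al}(x,y):=\int_{\Heis}\bigl(G_{\al}(x,q)-G_{\al}(e,q)\bigr)\bigl(G_{\al}(y,q)-G_{\al}(e,q)\bigr)d\mu(q).
\end{align*}

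The next step is to derive the Feynman-Kac identity
\begin{align*}
\sum_{k\ge 0}k!\|f_k(\cdot,t,q)\|_{\mathcal{H}^{\otimes k}}^{2}=\E_{q,q}\!\left[u_{0}(\cb_t)u_{0}(\tilde{\cb}_t)\exp\!\left(\int_{0}^{t}K_{\al}(\cb_s,\tilde{\cb}_s)ds\right)\right],
\end{align*}
where $\cb,\tilde{\cb}$ are two independent Brownian motions on $\Heis$ started at $q$. This is carried out by the same unfolding procedure sketched for Proposition \ref{thm: exponential L^2 bound}: expand $k!\|f_k\|^{2}$ using \eqref{f_n explicit}, reduce to the ordered simplex $[0,t]_{<}^{k}$ by symmetry in $(s_i,q_i)$, read each product of heat kernels as the joint density of one Brownian trajectory via Chapman-Kolmogorov, apply Fubini, and conclude with the power-series identity $\sum_{k}x^{k}/k!=e^{x}$. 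Since $u_0$ is assumed bounded, the proof reduces to controlling the exponential moment on the right-hand side.

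Although $K_{\al}$ is \emph{not} translation invariant, the increments of $\bW_{\al}$ are. Indeed, polarizing $\mathrm{Var}(\dot{\bW}_{\al}(x)-\dot{\bW}_{\al}(y))=K_{\al}(x,x)-2K_{\al}(x,y)+K_{\al}(y,y)$ and using Proposition \ref{prop-translation-pt} (which yields $\mathrm{Var}(\dot{\bW}_{\al}(x)-\dot{\bW}_{\al}(y))=\mathrm{Var}(\dot{\bW}_{\al}(y^{-1}x))$ together with $\dot{\bW}_\al(e)=0$) gives the key identity $K_{\al}(x,y)=\tfrac{1}{2}[K_{\al}(x,x)+K_{\al}(y,y)-K_{\al}(y^{-1}x,y^{-1}x)]$. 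Combining this with the pointwise bound $K_{\al}(z,z)\le Cd_{cc}(z,e)^{2H}$ supplied by Proposition \ref{prop-integrability-G}, where $H:=2\al-(n+1)\in(0,1)$, and using the left invariance of $d_{cc}$, we deduce the sub-quadratic polynomial bound
\begin{align*}
|K_{\al}(x,y)|\le C\bigl(d_{cc}(x,e)^{2H}+d_{cc}(y,e)^{2H}+d_{cc}(x,y)^{2H}\bigr).
\end{align*}

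It remains to extract the finiteness of the exponential moment. The scaling of Proposition \ref{prop:scaling-bm} applied pathwise, together with the Gaussian heat kernel upper bound \eqref{eq-heat-kernel-bds}, furnishes a tail estimate of the form $\PP(\sup_{s\le t}d_{cc}(\cb_s,e)>r)\le Ce^{-cr^{2}/t}$. Because $2H<2$, a direct computation then yields $\E_{q,q}[\exp(C'\int_{0}^{t}d_{cc}(\cb_s,e)^{2H}ds)]<\infty$ for every $t>0$ and every $C'>0$, and the same bound holds with $d_{cc}(\cb_s,e)$ replaced by $d_{cc}(\tilde{\cb}_s,e)$ or $d_{cc}(\cb_s,\tilde{\cb}_s)$ by independence and the triangle inequality. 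Plugging this into the Feynman-Kac identity gives the required summability and concludes via Proposition \ref{general existence chaos}. The main obstacle in this plan is the derivation of a workable Feynman-Kac representation in the non-translation-invariant setting; once the increment stationarity from Proposition \ref{prop-translation-pt} is leveraged to produce the polynomial bound on $K_{\al}$, the remaining Brownian motion estimate is standard because the restriction $\al<\frac{n+2}{2}$ places us strictly below the critical growth rate $2H=2$.
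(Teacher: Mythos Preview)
Your proposal is correct and follows essentially the same route as the paper: both reduce existence--uniqueness to the finiteness of the Feynman--Kac exponential moment $\mathbb{E}\bigl[\exp\bigl(\beta\int_{0}^{t}K_{\al}(\cb_s,\tilde{\cb}_s)\,ds\bigr)\bigr]$, bound $|K_{\al}|$ by a power $d_{cc}(\cdot,e)^{2H}$ with $2H=4\al-2(n+1)<2$, and then exploit the sub-Gaussian tails of $\cb$ to conclude. Two small points of comparison are worth noting. First, your detour through the polarization identity and Proposition~\ref{prop-translation-pt} is unnecessary: the paper simply applies $|ab|\le\tfrac12(a^{2}+b^{2})$ directly to the defining integral of $K_{\al}$ (their $\gamma$), yielding $|K_{\al}(x,y)|\lesssim d_{cc}(x,e)^{2H}+d_{cc}(y,e)^{2H}$ in one line. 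Second, for the exponential moment the paper uses Jensen's inequality on the time average rather than a supremum tail bound; both work, but Jensen avoids having to establish the maximal inequality $\PP(\sup_{s\le t}d_{cc}(\cb_s,e)>r)\le Ce^{-cr^{2}/t}$, which you assert without proof and which is not stated elsewhere in the paper.
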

\begin{proof}This proof is not a direct application of the chaos estimate \eqref{exist of sol-chaos expansion}. It is based instead on a closely related Feynman-Kac formula already alluded to in \eqref{eq-L-2-mom}. We divide it in several steps.

\noindent{\it Step 1: Reduction to an exponential estimate.}  Owing to relation \eqref{eq-W-ptwise}, the covariance function of $\bW_\alpha$ is given by
\[\mathbf{E}\left[\bW_\alpha(s,q)\bW_\alpha(t,q')\right]=(s\wedge t)\gamma(q,q'),\]
where the function $\gamma$ is defined by
\begin{align}\label{covariance smooth case}
\gamma(q,q'):=\int_{\mathbf H^n} (G_\alpha (q,r)-G_\alpha (e,r))(G_\alpha (q',r)-G_\alpha (e,r)) dr.
\end{align}
We now claim that the existence of a solution to \eqref{mild sol} can be reduced to prove that for all $\beta>0$ we have
\begin{align}\label{criterion Feyman-Kac}
\mathbb{E}_x\left[ \exp\left({\beta\int_0^t\gamma(\mathcal{B}_s,\tilde{\mathcal{B}}_s)ds}\right)\right]<\infty,
\end{align}
where $\mathcal{B}_s$ and $\tilde{\mathcal{B}}_s$ are two independent Brownian motions on $\Heis$ starting from the same point $x$. In \eqref{criterion Feyman-Kac}, also notice that $\mathbb{E}_x$ designates the expected value with respect to the randomness in $\mathcal{B}$ and $\tilde{\mathcal{B}}$.

In order to prove that existence of a solution to \eqref{mild sol} amounts to \eqref{criterion Feyman-Kac}, we start from~\eqref{exist of sol-chaos expansion}. Indeed, \eqref{exist of sol-chaos expansion} asserts that a precise $L^2(\Omega)$-bound on the solution $u$ yields existence and uniqueness. Next we consider a smooth noise 
\[ \bW_\alpha^\epsilon=\bW_\alpha*p_\epsilon,\]
where $p_\epsilon$ is the heat kernel in \eqref{eq-Heis-kernel}. Then one can show that equation \eqref{mild sol} driven by the (smooth in space) noise $\bW_\alpha^\epsilon$ admits a unique solution called $u^\epsilon$. Furthermore similarly to \cite[Proposition 4.7]{HHNT}, relation \eqref{criterion Feyman-Kac} ensures that $u^\epsilon$ converges in $L^2$ to $u$, where $u$ solves~\eqref{mild sol}. Our aim will thus be to prove \eqref{criterion Feyman-Kac}. Notice that this method amounts to using a Feynman-Kac representation of the solution.

\noindent{\it Step 2: Estimates on $\gamma$.} Recall that Proposition \ref{prop-integrability-G} states that for all $q\in\Heis$ we have
\[
\int_{\Heis} (G_\alpha (q,r)-G_\alpha (e,r))^2 dr \le Cd_{cc}(q,e)^{4\alpha-2(n+1)}.
\]
Plugging this relation into \eqref{covariance smooth case} and applying the elementary inequality $|ab|\leq (a^2+b^2)/2$, we get
\begin{align}\label{esti covariance smooth case}
|\gamma(q,q')|\lesssim\left( d_{cc}(q,e)^{4\alpha-2(n+1)}+ d_{cc}(q',e)^{4\alpha+2(n+1)}\right).
\end{align}
Reporting \eqref{esti covariance smooth case} into \eqref{criterion Feyman-Kac} and invoking Schwarz' inequality, it is now sufficient to prove that for all $\beta>0$ we have
\begin{align}\label{smooth case sufficient cond}
\mathbb{E} \left[\exp\left(\beta\int_0^td_{cc}(\mathcal{B}_s,e)^{4\alpha-2(n+1)}ds\right)\right]<\infty.
\end{align}
In the sequel we turn our attention to prove \eqref{smooth case sufficient cond}.

\noindent{\it Step 3: Proof of \eqref{smooth case sufficient cond}.} We apply Jensen's inequality to the left hand-side of \eqref{smooth case sufficient cond}. This yields
\begin{align}
\mathbb{E} \lc \exp\lp \beta\int_0^td_{cc}(\mathcal{B}_s,e)^{4\alpha-2(n+1)}ds\rp \rc
&=\mathbb{E} \lc \exp\lp \beta t\int_0^td_{cc}(\mathcal{B}_s,e)^{4\alpha-2(n+1)}\frac{ds}{t} \rp\rc\notag\\
&\leq \frac{1}{t}\mathbb{E}\lc \int_0^t \exp\lp \beta t\, d_{cc}(\mathcal{B}_s,e)^{4\alpha-2(n+1)}\rp ds\rc\notag\\
&=\frac{1}{t}\int_0^t \mathbb{E}\lc \exp\lp\beta t \,d_{cc}(\mathcal{B}_s,e)^{4\alpha-2(n+1)}\rp\rc ds.\label{needs to be done}
\end{align}
Owing to \eqref{eq-heat-kernel-bds}, it is now readily seen that the right hand-side of \eqref{needs to be done} is finite as long as  $4\alpha-2(n+1)<2$ (or otherwise stated $\alpha<\frac{n+2}{2}$).

\noindent{\it Step 4: Conclusion.} We have proved that \eqref{criterion Feyman-Kac} holds true as long as $\alpha<\frac{n+2}{2}$. According to our considerations in Step 1, we have thus obtained the existence  and uniqueness of a solution to \eqref{mild sol} under this condition on $\alpha$. 
\end{proof}

Interestingly enough (and similarly to the distributional noise case of Section~\ref{sec:existence-chaos-non-smooth}), the Feynman-Kac type methodology put forward in Theorem~\ref{thm:existence-smooth-noise} also yields asymptotic results for the second moment of $u$ through Laplace method. Below is a result in this direction. 

\begin{theorem} In the regime $\frac{n+1}{2}<\alpha<\frac{n+2}{2}$ of Theorem \ref{thm:existence-smooth-noise}, let $u$ be the unique  solution to equation \eqref{mild sol}. Then for all $q\in\Heis$, $u_t(q)$ admits the following asymptotic upper bound for its second moment: there exist two constants $C_1, C_2>0$ such that for $t$ large enough we have
\begin{align}\label{L2 upper bound smooth case}
\mathbf{E} [u_t(x)^2]\leq  C_1 {t^{\rho n}}\exp\left\{ C_2t^{\rho}\right\},
\end{align}
where the exponent $\rho=\rho(\al)$ satisfies
\begin{equation}\label{eq:def-nu}
1< \rho =\frac{2\al-n}{2-(2\al-n)} <\infty ,
\quad\text{as}\quad 
\frac{n+1}{2}<\alpha<\frac{n+2}{2}.
\end{equation}
\end{theorem}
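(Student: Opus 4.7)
The plan is to extend the Feynman-Kac argument used in the proof of Theorem \ref{thm:existence-smooth-noise} into a quantitative $t$-dependent bound. Starting from the Feynman-Kac representation (analogous to \eqref{eq-L-2-mom}), one writes
\begin{align*}
\mathbf{E}[u_t(q)^2] \lesssim \mathbb{E}_q\!\left[\exp\!\left(\int_0^t \gamma(\mathcal{B}_s, \tilde{\mathcal{B}}_s)\, ds\right)\right],
\end{align*}
where $\mathcal{B}, \tilde{\mathcal{B}}$ are two independent Brownian motions on $\Heis$ started at $q$ and $\gamma$ is the covariance of \eqref{covariance smooth case}. The pointwise bound \eqref{esti covariance smooth case}, combined with the independence of the two Brownian motions (which decouples the exponential as a product), reduces the problem to controlling $I(t) := \mathbb{E}[\exp(c\int_0^t d_{cc}(\mathcal{B}_s, e)^\kappa\, ds)]$ with $\kappa := 4\alpha - 2(n+1) \in (0,2)$; the $q$-dependence coming from the triangle inequality $d_{cc}(q\mathcal{B}_s,e)\le d_{cc}(q,e)+d_{cc}(\mathcal{B}_s,e)$ is absorbed into a factor $\exp(Ct\,d_{cc}(q,e)^\kappa)$, which is dominated by $\exp(C_2 t^\rho)$ since $\rho>1$. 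I would then upgrade the scaling identity of Proposition \ref{prop:scaling-bm} to the process level $(\mathcal{B}_{ts})_{s\ge 0} \stackrel{\mathcal{D}}{=} (\delta_{\sqrt{t}}\mathcal{B}_s)_{s\ge 0}$, so that a change of variables yields
\begin{align*}
\int_0^t d_{cc}(\mathcal{B}_s, e)^\kappa\, ds \stackrel{\mathcal{D}}{=} t^{1 + \kappa/2}\int_0^1 d_{cc}(\mathcal{B}_u, e)^\kappa\, du.
\end{align*}
With $1+\kappa/2 = 2\alpha - n$, and setting $R := \sup_{0 \le u \le 1} d_{cc}(\mathcal{B}_u, e)$, this gives $I(t) \le \mathbb{E}[\exp(\lambda R^\kappa)]$ where $\lambda := c\, t^{2\alpha - n}$.

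The final step is a Laplace-method evaluation of $\mathbb{E}[\exp(\lambda R^\kappa)]$. Combining the Gaussian heat kernel bound \eqref{eq-heat-kernel-bds} with the volume growth of Carnot--Carath\'eodory balls (governed by the Hausdorff dimension $Q = 2(n+1)$) and a maximal-type argument for the supremum, one expects a sub-Gaussian tail of the form $\mathbb{P}(R > r) \lesssim r^{2n}\exp(-cr^2)$ for $r \ge 1$. An integration by parts then gives
\begin{align*}
\mathbb{E}[\exp(\lambda R^\kappa)] \lesssim 1 + \lambda \int_0^\infty r^{\kappa + 2n - 1}\exp(\lambda r^\kappa - cr^2)\, dr,
\end{align*}
and Laplace's method around the critical point $r^\ast = (\lambda \kappa / 2c)^{1/(2-\kappa)}$ produces an exponential factor $\exp(C\lambda^{2/(2-\kappa)})$ together with a polynomial prefactor in $\lambda$. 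Using the identities $2/(2-\kappa) = 1/(2 - (2\alpha - n))$ and $\lambda = ct^{2\alpha - n}$, the exponential becomes $\exp(C_2 t^\rho)$ with $\rho$ as in \eqref{eq:def-nu}, and the polynomial prefactor simplifies to $t^{\rho n}$.

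The hard part will be deriving the sharp tail bound $\mathbb{P}(R > r) \lesssim r^{2n}e^{-cr^2}$ on the \emph{supremum} (rather than at a fixed time), with the precise polynomial exponent $2n = Q - 2$ that ultimately yields $t^{\rho n}$ rather than a larger polynomial. For a fixed time this follows from integrating the heat kernel bound \eqref{eq-heat-kernel-bds} in polar coordinates on $\Heis$, but the transfer to the supremum will likely require a Bessel-process comparison in the spirit of the proof of Proposition \ref{thm: exponential L^2 bound}, or a direct maximal inequality adapted to the sub-Riemannian setting. Careful bookkeeping of the polynomial exponent through the Laplace integral and the final substitution $\lambda = c t^{2\alpha - n}$ is the other delicate step needed to land exactly on $t^{\rho n}$.
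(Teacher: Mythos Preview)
Your approach is correct in outline and would reach the stated bound, but it takes a harder route than the paper. The key divergence is in how you pass from the time-integrated exponent $\int_0^t d_{cc}(\mathcal{B}_s,e)^\kappa\,ds$ to a one-dimensional Laplace integral. You rescale at the process level and then dominate the integral by $R^\kappa$ with $R=\sup_{0\le u\le 1}d_{cc}(\mathcal{B}_u,e)$, which forces you to establish a sub-Gaussian tail $\mathbb{P}(R>r)\lesssim r^{2n}e^{-cr^2}$ for the \emph{supremum}. As you yourself note, getting exactly the prefactor $r^{2n}$ here is the delicate part; it is not covered by the fixed-time heat-kernel bound \eqref{eq-heat-kernel-bds} and would require a separate argument (Bessel comparison or a maximal inequality adapted to $\Heis$).

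The paper avoids this entirely by applying Jensen's inequality \emph{before} scaling: writing $\exp\!\big(\beta\int_0^t(\cdot)\,ds\big)=\exp\!\big(\beta t\int_0^t(\cdot)\,\tfrac{ds}{t}\big)$ and pushing the exponential inside yields
\[
\mathbf{E}[u_t(x)^2]\;\le\;\frac{1}{t}\int_0^t \mathbb{E}\!\left[\exp\!\big(Ct\,d_{cc}(\mathcal{B}_s,e)^{\kappa}\big)\right]ds.
\]
Now only the \emph{fixed-time} law of $\mathcal{B}_s$ is needed, so the pointwise scaling of Proposition~\ref{prop:scaling-bm} together with the heat-kernel bound~\eqref{eq-heat-kernel-bds} gives directly
\[
Q_t\;\lesssim\;\int_0^\infty r^{2n+1}\exp\!\big(Ct^{2\alpha-n}r^{\kappa}-C_4 r^2\big)\,dr,
\]
and the Laplace analysis proceeds exactly as in your final step. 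Your method buys nothing extra here and costs you the supremum tail estimate; the Jensen trick is both shorter and self-contained.
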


\begin{proof} According to equations \eqref{criterion Feyman-Kac} and \eqref{needs to be done}, the second moment of $u_t(q)$ is controlled by a quantity $Q_t$ of the form
\begin{equation}\label{asyp L2 upper bound smooth}
Q_t=\frac{1}{t}\int_0^t \mathbb{E}\left[ \exp\left({{C}t \,d_{cc}(\mathcal{B}_s,e)^{4\alpha-2(n+1)}}\right)\right] ds,
\end{equation}
for a constant $C>0$. We will now estimate the right hand-side of  \eqref{asyp L2 upper bound smooth}. Apply first the scaling property \eqref{scaling property BM} to the right hand-side of \eqref{asyp L2 upper bound smooth}. We get
\[
Q_t=\frac{1}{t}\int_0^t \mathbb{E}\left[\exp\left({{C}ts^{2\alpha-(n+1)} \,d_{cc}(\mathcal{B}_1,e)^{4\alpha-2(n+1)}}\right)\right]ds.
\]
Moreover recall that we are considering the regime $\alpha>\frac{n+1}{2}$. Hence bounding $s^{2\alpha-(n+1)}$ by $t^{2\alpha-(n+1)}$, we end up with
\[
Q_t \le  \mathbb{E}\left[\exp\left({{C}t^{2\alpha-n} \,d_{cc}(\mathcal{B}_1,e)^{4\alpha-2(n+1)}}\right)\right].
\]
Therefore invoking \eqref{eq-heat-kernel-bds} we obtain
\[
Q_t \le C_3\int_\Heis  \exp\left({{C}t^{2\alpha -n}\, d_{cc}(q,e)^{4\alpha-2(n+1)}-C_4 \, d(e,q)^{2}}\right) d\mu(q).
\]
We now resort to polar coordinates, similarly to what we did after equation \eqref{eq-soln-ub}. This allows to write 
\begin{equation}\label{eq-Laplace-est}
Q_t\lesssim \int_0^{+\infty} r^{2n+1} \exp\left({{C} t^{2\alpha-n}\, r^{4\alpha-2(n+1)}-C_4 \, r^{2}}\right)dr.
\end{equation}
We are thus reduced to evaluate the $1$-d integral in the right hand-side of \eqref{eq-Laplace-est}.

The right hand-side of \eqref{eq-Laplace-est} will be bounded thanks to Laplace asymptotics. To this aim, consider the change of variable 
$r=t^{\nu}u$, where $\nu=\rho/2$ and $\rho$ is defined by \eqref{eq:def-nu}. It is elementary to check that 
$$t^{2\alpha-n}t^{\nu(4\alpha-2(n+1))}=t^{2\nu}.$$ 
Hence the change of variable in \eqref{eq-Laplace-est} leads to the upper bound 
\begin{align}\label{after change of variable}
Q_t\lesssim {t^{\nu(2n+1)}}\int_0^{+\infty} u^{2n+1} e^{t^{2\nu}f(u)}du
\end{align}
where we have set
\[
f(u)={C}\, u^{2\alpha-n}-{C_{4}} \, u^{2}.
\] 
Recall again that $2\alpha-n<2$ under our set of assumptions. Hence one can easily optimize $f$ over $\mathbb{R}$. We get
\begin{align}\label{u*}
u^*&\equiv\arg \min f(u)=\lp\frac{C(2\alpha-n)}{2C_4}\rp^{\frac{1}{1-2\alpha+(n+1)}},\\
f(u^*)&= \frac{C}{4}(2-2\alpha+n) \, (u^*)^{2\alpha-n} .\label{f(u*)}
\end{align}
We also let the patient reader check that
\begin{align}\label{f''}
f''(u^*)=2C_4(2\alpha-(n+2))<0.
\end{align}
According to Laplace's method (see e.g. \cite[Section 6.4]{BO}) and going back to  \eqref{after change of variable}, for $t$ large enough we have
\[
Q_t\lesssim t^{(2n+1)\nu}(u^*)^{2n+1}e^{t^{2\nu}f(u^*)}\left({\frac{2\pi}{t^{2\nu}|f''(u^*)|}}\right)^{\frac{1}{2}}.
\]
Reporting the values \eqref{u*}, \eqref{f(u*)} and \eqref{f''} in the inequality above, we can thus write
\begin{align*}
Q_t\lesssim{t^{2\nu n}}\exp\left\{ C't^{2\nu}\right\}=t^{n\rho} \exp\left\{ C't^{\rho}\right\},
\end{align*}
where we recall that $\rho=2\nu$. This proves  our claim \eqref{L2 upper bound smooth case}.
\end{proof}

\bigskip

\paragraph{\textbf{Acknowledgment.}}
We would like to thank Jean-Yves Chemin for some interesting discussion about the Fourier transform on Heisenberg groups.

\bibliographystyle{amsplain}
\bibliography{biblio}

\end{document}